%%Latex file for the Explicit Batman paper

\documentclass[11pt]{amsart}

\usepackage{amsthm,amssymb,amstext,amscd,amsfonts,amsbsy,amsrefs,amsxtra,latexsym,amsmath,xcolor,mathrsfs,fancybox,upgreek, soul,url}
\usepackage[english]{babel}
\usepackage[all,cmtip]{xy}
\usepackage[latin1]{inputenc}
\usepackage{cancel}
\usepackage{hyperref}
\usepackage{comment}
\usepackage{mdframed}
%\allowdisplaybreaks
%\usepackage{refcheck}
\usepackage{mathtools}
\usepackage{enumerate}
\usepackage{thmtools}
\usepackage{thm-restate}

\usepackage{chngcntr}
\usepackage{enumitem}
\usepackage{etoolbox}
\usepackage{todonotes}
\usepackage{footmisc}
\usepackage{bigints}
\usepackage{enumerate}
\usepackage{graphicx}
\usepackage{youngtab}
\usepackage{bm}
\usepackage{caption}
\usepackage{float}
\oddsidemargin = 0cm \evensidemargin = 0cm \textwidth = 6.5in \textheight=8.75in
\theoremstyle{plain}
\newtheorem{theorem}{Theorem}[section]
\newtheorem{corollary}[theorem]{Corollary}
\newtheorem{lemma}[theorem]{Lemma}
\newtheorem{proposition}[theorem]{Proposition}

\theoremstyle{definition}

\theoremstyle{remark}
\newtheorem*{remark}{Remark}
\newtheorem*{example}{Example}

\newtheorem*{multRem}{Two Remarks}

\newcommand{\Z}{\mathbb{Z}}
\newcommand{\Q}{\mathbb{Q}}
\newcommand{\GL}{\mathrm{GL}}

\newcommand{\F}{\mathbb{F}}

\newcommand{\HH}{\mathbb{H}}

\newcommand{\N}{\mathbb{N}}
\newcommand{\SL}{\operatorname{SL}}
\newcommand{\Mp}{\operatorname{Mp}}
\newcommand{\Cl}{\mathrm{Cl}}

\newcommand{\C}{\mathbb{C}}

%%%%%%%%%%%%%%%%%%%%%%%%%%%%%%%%%%%%%%%%%%
%Please check this

\catcode`,\active

\catcode`\,12

%%%%%%%%%%%%%%%%%%%%%%%%%%%%%%%%%%%%%%%%%%%
\numberwithin{equation}{section}

\DeclareMathOperator{\ST}{ST}
\DeclareMathOperator{\Bat}{Bat}
\DeclareMathOperator{\hol}{hol}

\begin{document}

\title[Explicit Sato-Tate type distribution for a family of $K3$ surfaces]{Explicit Sato-Tate type distribution for a family of $K3$ surfaces}

\author{Hasan Saad}

\address{Department of Mathematics, University of Virginia, Charlottesville, VA 22904}
 \email{hs7gy@virginia.edu}

\keywords{K3 Surfaces, Sato-Tate Type Distributions, Harmonic Maass Forms}
\subjclass[2000]{11F46, 11F11, 11G25, 11T24}

\begin{abstract}  

In the 1960's, Birch proved that the traces of Frobenius for elliptic curves taken at random over a large finite field is modeled by the semicircular distribution (i.e. the usual Sato-Tate for non-CM elliptic curves).  In analogy with Birch's result, a recent paper by Ono, the author, and Saikia proved that the limiting distribution of the normalized Frobenius traces $A_{\lambda}(p)$ of a certain family of $K3$ surfaces $X_\lambda$ with generic Picard rank $19$ is the $O(3)$ distribution. This distribution, which we denote by $\frac{1}{4\pi}f(t),$ is quite different from the semicircular distribution. It is supported on $[-3,3]$ and has vertical asymptotes at $t=\pm1.$ Here we make this result explicit. We prove that if $p\geq 5$ is prime and $-3\leq a<b\leq 3,$ then
$$
\left|\frac{\#\{\lambda\in\F_p :A_{\lambda}(p)\in[a,b]\}}{p}-\frac{1}{4\pi}\int_a^b f(t)dt\right|\leq \frac{110.84}{p^{1/4}}.
$$
As a consequence, we are able to determine when a finite field $\F_p$ is large enough for the discrete histograms to reach any given height near $t=\pm1.$ To obtain these results, we make use of the theory of Rankin-Cohen brackets in the theory of harmonic Maass forms.

\end{abstract}
\maketitle
\section{Introduction and statement of results}\label{SectionIntro}

If $E/\Q$ is an elliptic curve, then a theorem of Hasse implies, for every prime $p,$ that there exists $\theta_E(p)\in[0,\pi]$ such that
$$
\# E(\F_p)=p+1-2\sqrt{p}\cos(\theta_E(p)).
$$
Around 1960, Sato and Tate independently conjectured that if $E$ does not have complex multiplication and $0\leq a<b\leq \pi,$ then 
$$
\lim\limits_{N\to\infty}\frac{\#\left\{p\leq N : \theta_E(p)\in[a,b]\right\}}{N}=\frac{2}{\pi}\int_a^b\sin^2\theta d\theta.
$$
Under some conditions, the Sato-Tate Conjecture was famously proved by Clozel, Harris, Shepherd-Barron, and Taylor in three joint papers \cite{TaylorSatoTate1a},\cite{TaylorSatoTate1c}, and \cite{TaylorSatoTate1b}. An unconditional proof was obtained in 2011 by Barnet-Lamb, Geraghty, Harris, and Taylor \cite{TaylorSatoTate2}.

There are other arithmetic questions that have been considered about such angles. Indeed, almost immediately after the Sato-Tate conjecture was formulated, Birch \cite{birch} proved that the distribution of $\theta_E(p),$ where $E$ varies over all elliptic curves defined over $\F_p,$ is the Sato-Tate distribution as $p\to\infty.$  A recent paper by Murty and Prabhu \cite{murty} makes Birch's result effective by bounding the error for each finite field $\F_{p^r}.$ To make this precise, for $(c_1,c_2)\in\F_{p^r}^2$ such that $p\nmid 4c_1^3+27c_2^2,$ consider the elliptic curve
$$
E_{c_1,c_2}:\ \ y^2=x^3+c_1x+c_2.
$$
In this notation, if $-2<a<b<2$ and $r$ is a fixed positive integer, then
\begin{equation}\label{EqMurty}
	\frac{\#\left\{(c_1,c_2)\in\F_{p^r}^{2}, p\nmid (4c_1^3+27c_2^2), a_{c_1,c_2}(p^r)\in[a,b]\right\}}{p^{2r}}=\frac{1}{2\pi}\int_a^b \sqrt{4-t^2}dt+O_r(p^{7r/4}),
\end{equation}
where $a_{c_1,c_2}(p^r)=q+1-\#E_{c_1,c_2}(p^r).$

Here we study an analogous question for the $K3$ surfaces (see \cite{onoK3}) defined by 
$$
X_\lambda:\ \ \ s^2=xy(x+1)(y+1)(x+\lambda y),
$$
where $\lambda\in\F_p\setminus\{0,-1\}.$ The family $\{X_\lambda:\ \lambda\in\Q\setminus\{0,-1\}\}$ has several important properties. In particular, for all but finitely many $\lambda,$ the surfaces $X_\lambda$ have Picard rank $19.$ Moreover, Ahlgren, Ono, and Penniston proved \cite{onoK3} that if $p$ is an odd prime and $\lambda\in\F_p\setminus\{0,-1\},$ then the local zeta function of $X_{\lambda}$ at $p$ is
\begin{equation}\label{ZetaFunction}
Z\left(X_\lambda/\F_p, T\right)=\frac{1}{(1-T)(1-p^2T)(1-pT)^{19}(1-\gamma pT)(1-\gamma\pi_{\lambda,p}^2T)(1-\gamma\overline{\pi}_{\lambda,p}^2T)},
\end{equation}
where $\pi_{\lambda,p}$ and $\overline{\pi}_{\lambda,p}$ are the Frobenius eigenvalues of the elliptic curve $E^{\Cl}_{\frac{-1}{\lambda+1}}$ (see \ref{ClausenFamily}) and $\gamma\in\{\pm1\}.$
In light of (\ref{ZetaFunction}), the number of $\F_p$-rational points on $X_\lambda$ is approximately $p^2+19p+1.$ Therefore, it is natural to define $A_\lambda(p)$ by 
\begin{equation}
\# X_\lambda(\F_{p})=1+p^{2}+19p+p\cdot A_{\lambda}(p).
\end{equation}

We have that $A_{\lambda}(p)$ is in $[-3,3]$ for all $p,$ which turns out to be a consequence of Hasse's Theorem. An analogue of Birch's theorem would be to study the distribution of $A_{\lambda}(p),$ where $\lambda$ varies over $\F_p\setminus\{0,-1\},$ as $p\to\infty.$
In this direction, Ono, the author, and Saikia \cite{BatmanPaper} recently proved that the distribution of $A_{\lambda}(p),$ unlike the semicircular distribution, turns out to be the $O(3)$ distribution\footnote{The moments of this distribution arise \cite{pastur} as the moments of traces of the real orthogonal group $O(3).$}. To be precise, if $-3\leq a<b\leq 3,$ then
$$
\lim\limits_{p\to\infty}\frac{\#\left\{\lambda\in\F_{p} : A_\lambda(p)\in [a,b]\right\}}{p}=\frac{1}{4\pi}\int_a^b f(t)dt,
$$
where
\begin{equation}
f(t)=\begin{cases}
\sqrt{\frac{3-|t|}{1+|t|}} \ \ \ \ &\ \ \ \ {\text {\it if}}\  1<|t|<3, \\ \\
\sqrt{\frac{3-t}{1+t}}+\sqrt{\frac{3+t}{1-t}} &\ \ \ \  {\text{{\it if}}}\  |t|<1,\\ \\
0 &\ \ \ \ \text{otherwise}.
\end{cases}
\end{equation}
One striking feature is that the function $f(t)$ has vertical asymptotes at $t=\pm1.$
\begin{example}
For the prime $p=93283,$ we compare the histogram of the distribution of $A_\lambda(p),$ for $\lambda\in\F_{93283},$ with the limiting distribution.
\begin{center}
\begin{table}[H]
 \ \ \ \ \ \ \  \ \ \ \ \ \ \includegraphics[height=40mm]{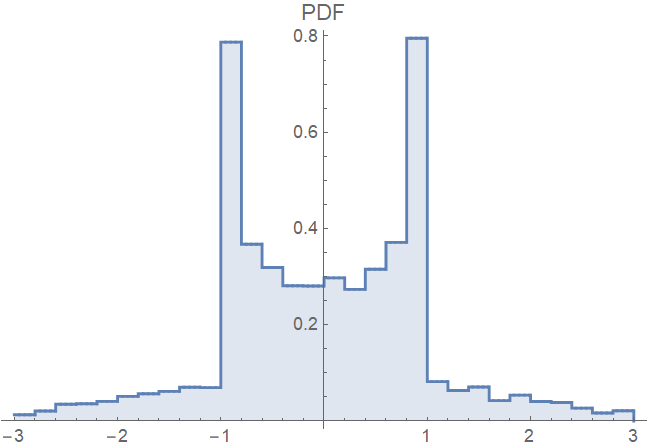}\ \   \ \ \ \ \ \ \ \  \ \ \ \ \ \ \ \ \includegraphics[height=45mm]{Histogram3F2Theoretical.png}\\
\caption*{ \ \ \ \ \ \ \ $A_\lambda$ histogram for $p=93283$ \ \ \ \ \ \ \ \ \ \ \ \ \ \ \ \ \ \ \ \ \ \ \  \ \ \ \ \ \ \ \ \ \ \ \ \ \ \ \ \ Plot of $\frac{1}{4\pi} f(t)$ \ \ \ \ \ \ \ \ \ \ }
\end{table}
\end{center}
\noindent
In view of these plots, we playfully refer to this distribution as the {\it Batman }distribution, and to the asymptotes as {\it Batman's ears}.

\end{example}

In analogy with the effective version of Birch's theorem by Murty and Prabhu stated in (\ref{EqMurty}), here we obtain an explicit version of this result.

\begin{theorem}\label{MainTheorem}
If $-3\leq a<b\leq 3$ and $p\geq 5$ is a prime, then
$$
\bigg|\frac{\#\left\{\lambda\in\F_{p} :A_\lambda(p)\in [a,b]\right\}}{p}-\frac{1}{4\pi}\int_a^b f(t)dt\bigg|\leq\frac{110.84}{p^{1/4}}.
$$
\end{theorem}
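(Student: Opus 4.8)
The plan is to convert the counting problem into an equidistribution statement about moments and then bound the discrepancy via a Beurling–Selberg type argument. First I would express $A_\lambda(p)$ in terms of the Frobenius angle $\theta_{\lambda}(p)$ of the Clausen elliptic curve $E^{\Cl}_{-1/(\lambda+1)}$: writing $\pi_{\lambda,p} = \sqrt p\, e^{i\theta_\lambda(p)}$, equation (\ref{ZetaFunction}) gives $p\cdot A_\lambda(p) = \gamma p + \gamma p(e^{2i\theta_\lambda}+e^{-2i\theta_\lambda}) + O(1)$-type terms, so $A_\lambda(p)$ is (up to the sign $\gamma$ and a bounded correction) a fixed polynomial in $\cos 2\theta_\lambda$, i.e. a combination of Chebyshev polynomials in $\cos\theta_\lambda$. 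Under this change of variables the target measure $\tfrac1{4\pi}f(t)\,dt$ on $[-3,3]$ should pull back, after averaging over the two choices of $\gamma$, to (a mixture of) the Sato–Tate measure $\tfrac2\pi\sin^2\theta\,d\theta$; this is presumably how the $O(3)$/Batman law was derived in \cite{BatmanPaper}, and I would reuse that computation.

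The core of the argument is then: for each $m$, estimate the power sums $\sum_{\lambda\in\F_p\setminus\{0,-1\}} U_m(\cos\theta_\lambda(p))$, where $U_m$ is the Chebyshev polynomial of the second kind, equivalently $\sum_\lambda \frac{\pi_{\lambda,p}^{m+1}-\overline\pi_{\lambda,p}^{m+1}}{\pi_{\lambda,p}-\overline\pi_{\lambda,p}}$. Because the Clausen family is a hypergeometric/modular family, these sums are essentially traces of Hecke operators, and the key analytic input — this is where the harmonic Maass form machinery and Rankin–Cohen brackets advertised in the abstract enter — is an \emph{explicit} bound of the form $\left|\sum_\lambda U_m(\cos\theta_\lambda(p))\right| \le C\, m^{A} \sqrt p$ with completely explicit constants $C, A$, valid for all $p\ge 5$. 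I would obtain this by writing the relevant generating function as a (twisted) Eichler–Selberg trace formula / as the $p$-th coefficient of a weight-$k$ form built from a Rankin–Cohen bracket of a weight-$3/2$ harmonic Maass form with a theta function, and controlling the shadow/non-holomorphic contributions explicitly; the Hasse bound on each $\pi_{\lambda,p}$ handles the trivial terms.

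With the explicit moment bounds in hand, I would run the standard Beurling–Selberg / Erdős–Turán-type discrepancy argument: approximate the indicator of $[a,b]$ (pulled back to the $\theta$-variable, or directly on $[-3,3]$) from above and below by trigonometric polynomials of degree $N$ agreeing with it in measure up to $O(1/N)$, apply the moment estimates to each Fourier coefficient up to degree $N$, collect the resulting bound of shape $\frac{c_1}{N} + \frac{c_2 N^{B}}{\sqrt p}$, and optimize $N \asymp p^{1/(2(B+1))}$. To land the exponent $p^{-1/4}$ in Theorem~\ref{MainTheorem}, the moment bound must be essentially $O(\sqrt p)$ uniformly in $m\le N$ with at most a mild polynomial loss in $m$, and one must choose $N\asymp p^{1/4}$; I would then carefully track every constant through the Beurling–Selberg coefficients, the number of moments used, and the Hasse error terms to certify the final numerical constant $110.84$. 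The main obstacle I anticipate is precisely this last bookkeeping: making the trace-formula / harmonic-Maass-form estimate for the power sums \emph{effective with small explicit constants} (rather than $O$-notation), since a lossy bound there propagates — amplified by the number of moments — into a final constant far larger than $110.84$, so the delicate point is an economical, fully explicit handling of the non-holomorphic (mock modular) contribution and of the error between the Batman measure and the finite-field histogram at the singularities $t=\pm1$.
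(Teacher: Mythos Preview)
Your outline is essentially the paper's strategy: relate $A_\lambda(p)$ to the Clausen Frobenius angle, bound Chebyshev moments $\sum_\lambda U_{2m}(\cos\theta_\lambda)$ by realising them as Fourier coefficients of Rankin--Cohen brackets of Zagier's weight-$\tfrac32$ form with a theta, apply Deligne, and finish with Beurling--Selberg at cutoff $N\asymp p^{1/4}$. Two points, however, are not yet right.

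\textbf{The sign $\gamma$ is not a free coin flip.} In the relation $p+p\,\phi(-\lambda)\,A_{-1-1/\lambda}(p)=a_\lambda^{\Cl}(p)^2$ the sign is $\phi(-\lambda)$ and depends on $\lambda$. Saying you will ``average over the two choices of $\gamma$'' hides a nontrivial equidistribution statement: one must show that, inside any interval of $|a_\lambda^{\Cl}(p)|/2\sqrt p$, the two values $\phi(-\lambda)=\pm1$ occur with equal density up to $O(p^{-1/4})$. This requires a \emph{second}, twisted family of moments $\sum_\lambda \phi(-\lambda)\,a_\lambda^{\Cl}(p)^{2n}$, which in the paper is a different Hurwitz-class-number identity (Proposition~\ref{ClassNumberProp}(2)) and forces a second Rankin--Cohen bracket $[\mathcal H,\theta(4\tau)]_m$ on $\Gamma_0(16)$, with its own newform decomposition (Lemmas~\ref{ScalarTheta4NewformLevel4}--\ref{ScalarTheta4NewformLevel1}). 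Your sketch only has the untwisted moments; without the twisted ones you cannot form $H^\pm(a,b;p)=\tfrac12(N\pm M)$ (Proposition~\ref{EffectiveClausenSym3}), and the passage from the semicircular law for $|a_\lambda^{\Cl}|$ to the $O(3)$ law for $A_\lambda$ does not close. This is also where the constant doubles from $55.42$ to $110.84$: one needs both $H^+$ and $H^-$ estimates when $[a,b]$ straddles $0$.

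\textbf{There is no Eichler--Selberg formula here.} The paper explicitly contrasts itself with \cite{murty} on this point: the Clausen moments are \emph{not} traces of Hecke operators for which an Eichler--Selberg identity is available. Instead one takes the holomorphic projection of $[\mathcal H,\theta]_m$ (Mertens, Theorem~\ref{HolProjTh}), computes its Petersson inner product against each newform $f$ and each $f|V(d)$ via Rankin--Selberg unfolding (Propositions~\ref{UnfoldingInGeneral}, \ref{UnfoldingInGeneral2} and Lemma~\ref{IntermediateUnfolding}), solves the resulting linear system to extract the newform coefficients explicitly (Propositions~\ref{ThetaCoefficients}, \ref{ThetaCoefficients4}), and only then applies Deligne. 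Your sketch should replace ``Eichler--Selberg trace formula'' by this pipeline; otherwise the ``completely explicit constants $C,A$'' you need will not materialise.
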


\begin{multRem}~
	
	\noindent
	(1) If $0<a<b<3$ or $-3<a<b<0,$ then the constant $110.84$ can be improved to $55.42.$
	
	\noindent
	(2) It is not difficult to generalize Theorem~\ref{MainTheorem} to arbitrary finite fields $\F_{p^r}.$ To do this, we note that the sums in Proposition~\ref{ClassNumberProp} should be modified for $\lambda$ such that $E_{\lambda}^{\Cl}$ is supersingular. The contribution of these supersingular curves can be bounded using elementary bounds for Hurwitz class numbers. The other results in this paper can then be applied almost verbatim. We leave these details to the interested reader.
\end{multRem}

We are motivated to obtain this explicit theorem due to the vertical asymptotes at $t=\pm1.$ For example,  if $T>0,$ then how large must the prime $p$ be so that the density of $A_{\lambda}(p)$ near $t=\pm 1$ is larger than $T?$ As the example above illustrates, $p=93283$ is not large enough for $T=1.$ More generally, numerical data suggests that the size of such $p$ must grow very rapidly with $T.$ Despite this growth, we have the following explicit answer.

\begin{corollary}\label{MainCorollary}
If $T>\frac{\sqrt{3}}{4\pi}, \delta>0,$ and $x(T,\delta)=\frac{4}{1+16\pi^2(T+\delta)^2},$  then the following are true.

\noindent
(1) If $p\geq\left(\frac{55.42}{x(T,\delta)\delta}\right)^4,$ then
$$
\frac{1}{x(T,\delta)}\cdot\frac{\#\left\{\lambda\in\F_{p} : A_\lambda(p)\in [1-x(T,\delta),1]\right\}}{p}>T.
$$

\noindent
(2) If $p\geq\left(\frac{55.42}{x(T,\delta)\delta}\right)^4,$ then
$$
\frac{1}{x(T,\delta)}\cdot\frac{\#\left\{\lambda\in\F_{p} : A_\lambda(p)\in [-1,-1+x(T,\delta)]\right\}}{p}>T.
$$

\noindent
Furthermore, the lower bound on $p$ is minimal when
$
\delta=\frac{\sqrt{16\pi^2T^2+1}}{4\pi}.
$
\end{corollary}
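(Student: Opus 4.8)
The plan is to read both parts off from Theorem~\ref{MainTheorem}, applied to the interval $[a,b]=[1-x,1]$ (for part~(1)) or $[a,b]=[-1,-1+x]$ (for part~(2)), where $x=x(T,\delta)$, and using the sharper constant $55.42$ that is available (by the remark following Theorem~\ref{MainTheorem}) whenever $[a,b]\subset(0,3)$ or $[a,b]\subset(-3,0)$. The first point I would check is that $x=x(T,\delta)$ really does lie in $(0,1)$, so that $1-x>0$ and the interval is of the required type: since $x(T,\delta)=\tfrac{4}{1+16\pi^2(T+\delta)^2}$, the inequality $x<1$ is equivalent to $16\pi^2(T+\delta)^2>3$, i.e.\ to $T+\delta>\tfrac{\sqrt3}{4\pi}$, which is forced by $T>\tfrac{\sqrt3}{4\pi}$ and $\delta>0$. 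Granting this, Theorem~\ref{MainTheorem} gives
$$
\frac{\#\{\lambda\in\F_p:A_\lambda(p)\in[1-x,1]\}}{p}\ \ge\ \frac{1}{4\pi}\int_{1-x}^{1}f(t)\,dt-\frac{55.42}{p^{1/4}}.
$$

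Next I would prove an explicit lower bound for the tail integral near the asymptote $t=1$. For $t\in(0,1)$ the definition of $f$ gives $f(t)\ge\sqrt{\tfrac{3+t}{1-t}}$, and $\tfrac{3+t}{1-t}$ is increasing on $(-1,1)$ (its derivative is $\tfrac{4}{(1-t)^2}$), so the integrand is increasing on $[1-x,1]$ and hence
$$
\int_{1-x}^{1}f(t)\,dt\ \ge\ \int_{1-x}^{1}\sqrt{\tfrac{3+t}{1-t}}\,dt\ \ge\ x\cdot\sqrt{\tfrac{3+(1-x)}{1-(1-x)}}\ =\ \sqrt{x(4-x)}.
$$
The reason for choosing this particular bound is the identity $\tfrac4x-1=16\pi^2(T+\delta)^2$, which follows at once from the formula for $x(T,\delta)$ and gives $\tfrac{1}{4\pi x}\sqrt{x(4-x)}=\tfrac{1}{4\pi}\sqrt{\tfrac4x-1}=T+\delta$. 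Dividing the displayed consequence of Theorem~\ref{MainTheorem} by $x$ then yields
$$
\frac{1}{x}\cdot\frac{\#\{\lambda\in\F_p:A_\lambda(p)\in[1-x,1]\}}{p}\ \ge\ (T+\delta)-\frac{55.42}{x\,p^{1/4}},
$$
and the right side exceeds $T$ once $\delta>\tfrac{55.42}{x\,p^{1/4}}$, i.e.\ once $p\ge\big(\tfrac{55.42}{x(T,\delta)\,\delta}\big)^4$; there is even a little slack, since $f(t)>\sqrt{\tfrac{3+t}{1-t}}$ strictly on $(0,1)$. This proves~(1). For~(2) I would run the same argument with $[1-x,1]$ replaced by $[-1,-1+x]\subset(-3,0)$; because $f$ is even, $\int_{-1}^{-1+x}f(t)\,dt=\int_{1-x}^{1}f(t)\,dt$, so the identical estimate and the identical constant $55.42$ apply.

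Finally, for the optimality assertion I would minimize the admissible threshold $\big(\tfrac{55.42}{x(T,\delta)\,\delta}\big)^4$ over $\delta>0$, which amounts to maximizing $g(\delta):=x(T,\delta)\,\delta=\tfrac{4\delta}{1+16\pi^2(T+\delta)^2}$. Since $g>0$ on $(0,\infty)$ while $g(\delta)\to0$ as $\delta\to0^+$ and as $\delta\to\infty$, the maximum is attained at the unique zero of $g'$; clearing denominators turns $g'(\delta)=0$ into the quadratic $16\pi^2(T+\delta)^2-32\pi^2T(T+\delta)-1=0$ in the variable $T+\delta$, whose positive root is $T+\delta=T+\tfrac{\sqrt{16\pi^2T^2+1}}{4\pi}$, i.e.\ $\delta=\tfrac{\sqrt{16\pi^2T^2+1}}{4\pi}$. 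I expect the only genuine obstacle to be the endpoint bookkeeping: one must keep $[1-x,1]$ strictly inside $(0,3)$ (and $[-1,-1+x]$ strictly inside $(-3,0)$) so that the refined constant $55.42$ is legitimate, and one must be sure that the crude monotonicity bound on $\int_{1-x}^{1}f(t)\,dt$ is sharp enough to reproduce exactly the value of $x(T,\delta)$ appearing in the statement — which, via the identity $\tfrac4x-1=16\pi^2(T+\delta)^2$, it is.
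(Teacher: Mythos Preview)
Your argument is correct and follows essentially the same route as the paper's proof: bound $\frac{1}{4\pi x}\int_{1-x}^{1}f(t)\,dt$ from below by the value of (the dominant piece of) the integrand at the left endpoint, observe that for $x=x(T,\delta)$ this lower bound equals $T+\delta$, and then invoke the refined constant $55.42$ from Theorem~\ref{MainTheorem}. You are more explicit than the paper in a few places---verifying $x\in(0,1)$, isolating the increasing summand $\sqrt{(3+t)/(1-t)}$ rather than asserting monotonicity of the full $f$, and carrying out the calculus optimization for $\delta$---but these are exactly the ``elementary computation'' and ``straightforward computation'' the paper defers to the reader.
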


\begin{example}
Suppose that $T=10$ and $\delta=\frac{\sqrt{1600\pi^2+1}}{4\pi}.$ Corollary~\ref{MainCorollary} gives that if $p\geq 3.45\times 10^{14}$ is a prime and $x=0.00006\ldots,$ then we have
$$
\frac{1}{x}\cdot\frac{\#\left\{\lambda\in\F_{p} : A_\lambda(p)\in [1-x,1]\right\}}{p}>10.
$$
\end{example}

In order to prove Theorem~\ref{MainTheorem}, we use an explicit version of the method of moments. This requires a different set of tools than what was employed by Murty and Prabhu in \cite{murty}. Namely, Murty and Prabhu bound the errors in the moments of the Frobenius traces by applying bounds from the theory of holomorphic modular forms to an Eichler-Selberg trace formula. In contrast to \cite{murty}, the moments of the values $A_{\lambda}(p)$ arise in a very different manner, which necessitates most of the work in this paper. Namely, these moments arise in the theory of harmonic Maass forms, where there are no Eichler-Selberg type trace formulas per se. To make this precise, we recall the strategy employed to prove the ineffective version of Theorem~\ref{MainTheorem}.

The values $A_{\lambda}(p)$ are first expressed in terms of Frobenius traces $a_{\lambda}^{\Cl}(p)$ of the Clausen family of elliptic curves (see (\ref{ClausenFamily})). The elliptic curves are then grouped into isomorphism classes whose order is determined by the work of Schoof \cite{schoof}. This expresses the moments of the traces as weighted sums of Hurwitz class numbers. To estimate these moments, the weighted sums are expressed as Fourier coefficients of Rankin-Cohen brackets applied to Zagier's weight $\frac{3}{2}$ Eisenstein series and certain univariate theta functions. This work gives the asymptotic properties of the moments.

In this paper, we make these more precise, by explicitly bounding the error terms. To do this, we apply Deligne's Theorem, that bounds the Fourier coefficients of newforms, to the holomorphic projections of the Rankin-Cohen brackets above. This process is rather lengthy and involved, and makes use of the Rankin-Selberg unfolding argument to explicitly write the holomorphic projections of the Rankin-Cohen brackets in terms of newforms. 

This paper is organized as follows. In Section~\ref{SectionPreviousResults}, we recall important facts about the family $\{X_\lambda\}$ of $K3$ surfaces and the arithmetic of Clausen elliptic curves. In Section~\ref{SectionBackground}, we review necessary background on the theory of modular forms. In Section~\ref{SectionPetersson}, we find the Petersson inner products of newforms with their images under certain $V(d)$ operators. We then use the results from Sections~\ref{SectionBackground} and ~\ref{SectionPetersson} in Section~\ref{SectionCoefficients} to explicitly express the holomorphic projection of the Rankin-Cohen brackets alluded to above in terms of newforms. This gives us bounds on certain Fourier coefficients of these brackets. These bounds are used in Section~\ref{SectionProbTheory} to conclude certain explicit circular distributions from explicit moments, which are then connected to the $O3$ distribution. Finally, in Section~\ref{SectionProofMainResults}, we conclude with the proofs of Theorem~\ref{MainTheorem} and Corollary~\ref{MainCorollary}.

\section*{Acknowledgements}
\noindent
The author thanks Ken Ono for his guidance, his many helpful remarks and for providing research support with the Thomas Jefferson Fund and the NSF Grant (DMS-2002265 and DMS-2055118). The author would also like to thank Eleanor McSpirit for her comments which improved the quality of the exposition in this paper.

\section{Elliptic curves and the arithmetic of $\{X_\lambda\}$}\label{SectionPreviousResults}

To study the distribution of the values $A_{\lambda}(p)$ in Theorem~\ref{MainTheorem}, we interpret these values in terms of the Clausen elliptic curves
\begin{equation}\label{ClausenFamily}
E^{\Cl}_\lambda:\ \ \ y^2=(x-1)(x^2+\lambda).
\end{equation}
This interpretation is given by the following theorem. 

\begin{theorem}\label{K3ClausenCon}[Prop. 4.1 and Th. 2.1 of \cite{onoK3}]
If $p\geq 5$ is a prime, $\lambda\in\F_p\setminus\{0,-1\},$ and $a^{\Cl}_\lambda(p):=p+1-|E_\lambda^{\Cl}(\F_p)|,$ then we have
$$
p+p\cdot\phi(-\lambda)A_{-1-\frac{1}{\lambda}}(p)=a^{\Cl}_{\lambda}(p)^2,
$$
where $\phi$ is the quadratic character for $\F_p.$
\end{theorem}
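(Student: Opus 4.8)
The plan is to recover this identity from the two ingredients already recorded in the excerpt --- the explicit shape of the zeta function (\ref{ZetaFunction}) and the defining relation $\#X_\lambda(\F_p)=1+p^2+19p+p\cdot A_\lambda(p)$ --- together with an explicit point count that fixes the sign $\gamma$. First I would use that $X_\lambda$ is a $K3$ surface, so its $\ell$-adic cohomology vanishes in odd degree, while $H^0$ and $H^4$ are one-dimensional with geometric Frobenius acting by $1$ and $p^2$. Reading off the denominator of (\ref{ZetaFunction}), the $22$ eigenvalues of geometric Frobenius on $H^2$ are $p$ with multiplicity $19$, together with $\gamma p$, $\gamma\pi_{\lambda,p}^2$ and $\gamma\overline{\pi}_{\lambda,p}^2$ (here $\lambda$ is the $K3$ parameter; I will re-index at the end). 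The Grothendieck--Lefschetz trace formula then gives
$$
\#X_\lambda(\F_p)=1+p^2+19p+\gamma p+\gamma\bigl(\pi_{\lambda,p}^2+\overline{\pi}_{\lambda,p}^2\bigr),
$$
and comparing with the defining relation yields $p\cdot A_\lambda(p)=\gamma p+\gamma\bigl(\pi_{\lambda,p}^2+\overline{\pi}_{\lambda,p}^2\bigr)$.

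Next I would use that $\pi_{\lambda,p}$ and $\overline{\pi}_{\lambda,p}$ are the Frobenius eigenvalues of $E^{\Cl}_{-1/(\lambda+1)}$, so that $\pi_{\lambda,p}+\overline{\pi}_{\lambda,p}=a^{\Cl}_{-1/(\lambda+1)}(p)$ and $\pi_{\lambda,p}\overline{\pi}_{\lambda,p}=p$. Newton's identity gives $\pi_{\lambda,p}^2+\overline{\pi}_{\lambda,p}^2=a^{\Cl}_{-1/(\lambda+1)}(p)^2-2p$; substituting and multiplying through by $\gamma$ (using $\gamma^2=1$) produces
$$
a^{\Cl}_{-1/(\lambda+1)}(p)^2=p+\gamma\, p\cdot A_\lambda(p).
$$
Writing $\mu=-1/(\lambda+1)$, equivalently $\lambda=-1-1/\mu$, this reads $a^{\Cl}_\mu(p)^2=p+\gamma\, p\cdot A_{-1-1/\mu}(p)$, which is exactly the asserted formula once one knows that $\gamma=\phi(-\mu)$ with $\mu$ the Clausen parameter. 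Note that the hypotheses $\mu\notin\{0,-1\}$ (equivalently $\lambda\notin\{0,-1\}$ in the theorem's indexing) and $p\geq 5$ are precisely what guarantee that $E^{\Cl}_\mu$ has good reduction and that $2,3$ are invertible in the Weierstrass and hypergeometric manipulations below.

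The hard part is pinning the ambiguous sign $\gamma\in\{\pm1\}$ to $\phi(-\mu)$; this is the content of Proposition~4.1 of \cite{onoK3}. I would approach it by computing $\#X_\lambda(\F_p)$ directly as a character sum: writing the defining equation as $s^2=F(x,y)$ with $F(x,y)=xy(x+1)(y+1)(x+\lambda y)$ and summing $1+\phi\bigl(F(x,y)\bigr)$ over the affine chart --- keeping careful track of the curves contracted and the points added in the minimal desingularization --- the nontrivial part of the count collapses to a Greene-type finite-field ${}_3F_2$-hypergeometric function in the parameter. A finite-field analogue of Clausen's classical identity ${}_2F_1\!\left(a,b;a+b+\tfrac12;z\right)^2={}_3F_2\!\left(2a,2b,a+b;a+b+\tfrac12,2a+2b;z\right)$ then rewrites this ${}_3F_2$ as the square of a ${}_2F_1$, which up to an explicit normalization is the Frobenius trace of $E^{\Cl}_{-1/(\lambda+1)}$; the extra quadratic characters produced by the Gauss-sum multiplication (Hasse--Davenport) relations used in that transformation are what leave behind the factor $\phi(-\mu)$.

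I expect the genuine obstacle to be exactly this last step: the bookkeeping of quadratic characters through the hypergeometric transformation and through the resolution of singularities, together with confirming that the ${}_2F_1$ that emerges really is (the normalized) trace of the specific Clausen curve $E^{\Cl}_{-1/(\lambda+1)}$ rather than of a quadratic twist. Everything before that --- the Lefschetz computation, Newton's identity, and the re-indexing $\mu=-1/(\lambda+1)$ --- is formal. As a consistency check on the final shape, one can verify the supersingular case: when $a^{\Cl}_\mu(p)=0$ one has $\pi_{\lambda,p}^2+\overline{\pi}_{\lambda,p}^2=-2p$, hence $A_{-1-1/\mu}(p)=-\gamma=-\phi(-\mu)$, which is indeed what the asserted identity predicts.
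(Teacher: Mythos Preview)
The paper does not actually prove this theorem: it is quoted verbatim as Proposition~4.1 and Theorem~2.1 of \cite{onoK3}, with no argument supplied. So there is no ``paper's own proof'' to compare against beyond the citation.

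That said, your outline is sound and is essentially the route taken in \cite{onoK3}. The formal part --- extracting the point count from the zeta function~(\ref{ZetaFunction}) via Lefschetz, applying Newton's identity $\pi^2+\overline{\pi}^2=(\pi+\overline{\pi})^2-2p$, and re-indexing $\mu=-1/(\lambda+1)$ --- is exactly right and gives the identity up to the undetermined sign $\gamma$. You have also correctly isolated the nontrivial content: pinning $\gamma$ to $\phi(-\mu)$. Your proposed mechanism (a direct character-sum point count on $X_\lambda$ collapsing to a finite-field ${}_3F_2$, then a Greene-style Clausen identity expressing that ${}_3F_2$ as a ${}_2F_1$ squared, with the leftover quadratic character coming from the Gauss-sum multiplication relations) is precisely how \cite{onoK3} proceeds; the ${}_2F_1$ that appears is indeed the Gaussian hypergeometric function computing the trace of $E^{\Cl}_\mu$. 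Your caution about tracking quadratic twists through the transformation is well placed --- that bookkeeping is where errors typically creep in --- but there is no conceptual gap in what you have written.
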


To obtain our theorem, we need asymptotics with explicit error terms for the moments of $A_{\lambda}(p).$ Theorem~\ref{K3ClausenCon}  shows that these are essentially even power moments and twisted even power moments of the traces $a_{\lambda}(p).$ 
Using the work of Schoof \cite{schoof}, those power moments can be expressed as weighted sums of Hurwitz class numbers. 

To make this precise, we first fix some notation. If $-D<0$ such that $-D\equiv0,1\pmod{4},$ then denote by $\mathcal{O}(-D)$ the unique imaginary quadratic order with discriminant $-D.$ Let $h(D)$ denote the order of the class group of $\mathcal{O}(-D)$ and let $\omega(D)$ denote half the number of roots of unity in $\mathcal{O}(-D).$ In this notation, define\footnote{We define  $H^{\ast}(0):=-\frac{1}{12}$ and $h(D)=H^{\ast}(D)=0$ whenever $-D$ is neither zero nor a negative discriminant.}
$$
H^{\ast}(D):=\sum\limits_{f^2\mid D}\frac{h(D/f^2)}{\omega(D/f^2)}.
$$
Furthermore, to determine the power moments of the traces $a_{\lambda}^{\Cl},$ where $j(E_\lambda^{\Cl})=1728,$ we consider the decomposition of a prime $p$ as a sum of two squares. More precisely, if $p$ is a prime and $p\equiv 1\pmod 4,$ then we write $p=a^2+b^2$ for some unique choice of positive integers $a$ and $b$ with $a$ chosen to be odd. In this notation, if $n$ is a positive integer, then define 
\begin{equation}
c^{\pm}(p,n):=\begin{cases}
\frac{1}{2}((2a)^{2n}\pm (2b)^{2n}) &\text{\it{if} } p\equiv 1\pmod 4, \\
0 & \text{otherwise}.
\end{cases}
\end{equation}
The power moments we require are given by the following proposition.

\begin{proposition}\label{ClassNumberProp}
If $p\geq 5$ is a prime, then the following are true.

\noindent
(1) If $n$ is a positive integer, then
\begin{equation}\label{Eq1CN}
\sum\limits_{\lambda\in\F_p\setminus\{0,-1\}} a_\lambda^{\Cl}(p)^{2n}=\sum\limits_{\substack{0<s<2\sqrt{p} \\ 2\mid s}} \left(2H^\star\left(\frac{4p-s^2}{4}\right)+H^\star\left(4p-s^2\right)\right)s^{2n}-c^{+}(p,n).
\end{equation}

\noindent
(2) If $n$ is a positive integer, then
\begin{equation}\label{Eq2CN}
\sum\limits_{\lambda\in\F_p\setminus\{0,-1\}} \phi(-\lambda)a_\lambda^{\Cl}(p)^{2n}=\sum\limits_{\substack{0<s<2\sqrt{p} \\ 2\mid s}}\left(4H^\star\left(\frac{4p-s^2}{4}\right)-H^\star\left(4p-s^2\right)\right)s^{2n}-c^{-}(p,n).
\end{equation}
\end{proposition}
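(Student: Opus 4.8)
The plan is to compute, for each integer $s$, the quantity $N(s):=\#\{\lambda\in\F_p\setminus\{0,-1\}: a^{\Cl}_\lambda(p)=s\}$, and then read both identities off from $\sum_\lambda a^{\Cl}_\lambda(p)^{2n}=\sum_s s^{2n}N(s)$ and its $\phi(-\lambda)$-weighted analogue. First I would record two preliminary reductions. Translating $x\mapsto x+1$ puts $E^{\Cl}_\lambda$ in the shape $y^2=x(x^2+2x+(1+\lambda))$, exhibiting the rational $2$-torsion point $(0,0)$; hence $2\mid |E^{\Cl}_\lambda(\F_p)|$, which forces $a^{\Cl}_\lambda(p)\equiv p+1\equiv 0\pmod 2$, and since $s=0$ contributes nothing to an even power moment, the sums run over even $s$ with $0<|s|<2\sqrt{p}$ by Hasse. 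Second, the remaining $2$-torsion sits at $(x+1)^2=-\lambda$, so $E^{\Cl}_\lambda$ has full rational $2$-torsion exactly when $-\lambda$ is a square, i.e. when $\phi(-\lambda)=1$; this is the bridge between the twist by $\phi(-\lambda)$ and the full-$2$-torsion locus.

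Next I would set up the combinatorial dictionary. A pair $(E,C)$ with $C$ a rational order-$2$ subgroup can be written $y^2=x(x^2+ax+b)$ with $C=\langle(0,0)\rangle$, uniquely up to $(a,b)\mapsto(au^{-2},bu^{-4})$; our family is precisely the slice $a=2$, with $b=1+\lambda$. Grouping the $\lambda$'s by the $\F_p$-isomorphism class of $E^{\Cl}_\lambda$ then amounts to tracking which of the rational $2$-torsion points of a given curve, once moved to the origin, produce an $a$-coefficient normalizable to $2$ (i.e. lying in $2(\F_p^\times)^2$), modulo $\mathrm{Aut}(E)$. A short twist-pair computation — the $a$-coefficient of a quadratic twist is multiplied by the twisting parameter, so for each $2$-torsion point exactly one of $\{E,E^{(d_0)}\}$ is normalizable — shows that a generic twist pair with $|a_E|=s$ contributes exactly one value of $\lambda$ if $E$ has only $\Z/2\Z$ rational $2$-torsion and exactly three if it has full rational $2$-torsion. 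Feeding in Schoof's theorem \cite{schoof} — the (suitably weighted) number of $E/\F_p$ with $a_E=s$ is $H^{\star}(4p-s^2)$, while the number with $(\Z/2\Z)^2\hookrightarrow E(\F_p)$ and even trace $s$ is $H^{\star}((4p-s^2)/4)$, the denominator $4$ appearing because $\Z[(\pi-1)/2]$ has discriminant $-(4p-s^2)/4$ — and combining the full- and partial-$2$-torsion counts with multiplicities $3$ and $1$ yields, for $N(s)+N(-s)$, the expressions $2H^{\star}((4p-s^2)/4)+H^{\star}(4p-s^2)$ (untwisted) and $4H^{\star}((4p-s^2)/4)-H^{\star}(4p-s^2)$ (the $\phi(-\lambda)$-weighted count, full minus partial), up to corrections at $j\in\{0,1728\}$.

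It remains to handle the curves with extra automorphisms, which is the delicate step. The family meets $j=0$ only at $\lambda=1/3$ (giving $y^2=x^3-8/27$) and $j=1728$ only at $\lambda=-1/9$ (giving a curve with full rational $2$-torsion and trace $\pm2a$, where $p=a^2+b^2$ with $a$ odd). For $j=0$: if $p\equiv 2\pmod 3$ the curve is supersingular and contributes $0$ to an even moment; if $p\equiv 1\pmod 3$ the order-$3$ automorphism identifies its three marked-$2$-torsion structures, and checking this against the $h(3)/\omega(3)=1/3$-weighted terms inside the $H^{\star}$'s shows the main term already counts it correctly — no correction (morally because $2$ is inert in $\Z[\zeta_3]$). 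For $j=1728$: the quartic twists of $y^2=x^3+x$ have traces $\pm2a,\pm2b$, the two with trace $\pm2a$ being exactly those with full rational $2$-torsion, and the order-$4$ automorphism collapses marked structures; comparing the single honest parameter $\lambda=-1/9$ to the $h(4)/\omega(4)=1/2$-weighted terms inside the $H^{\star}$'s (here $2$ ramifies in $\Z[i]$) shows the main term overcounts by $\tfrac12$ at $s=2a$ and by $\tfrac12$ at $s=2b$, which after multiplying by $s^{2n}$ and summing is exactly the correction $-c^{+}(p,n)=-\tfrac12((2a)^{2n}+(2b)^{2n})$ for (\ref{Eq1CN}); the same computation for the $\phi(-\lambda)$-weighted sum (noting $\phi(-(-1/9))=\phi(1/9)=1$) gives overcount $\tfrac12$ at $s=2a$ but undercount $\tfrac12$ at $s=2b$, hence $-c^{-}(p,n)=-\tfrac12((2a)^{2n}-(2b)^{2n})$ for (\ref{Eq2CN}). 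The main obstacle throughout is precisely this bookkeeping: matching the naive count of $\lambda$'s against the automorphism-adjusted Hurwitz weights $h/\omega$, extracting the $j=1728$ correction with the correct sign and the correct $(2a)^{2n}\pm(2b)^{2n}$ shape, and reconciling the exact normalization in Schoof's theorem.
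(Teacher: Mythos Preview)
Your approach is correct and is essentially the same route the paper takes, only with the details filled in rather than deferred to citations. The paper's own proof is two sentences: it cites Proposition~3.2 of \cite{BatmanPaper} for the contribution of $\lambda$ away from the special values $1/3$ and $-1/9$, and cites equations (7.21) and (7.25) of \cite{OnoFrechettePapanikolas} for the remaining terms at $j\in\{0,1728\}$. What those references actually do is exactly what you sketch: parametrize $(E,\text{marked $2$-torsion point})$ pairs by the Clausen family, invoke Schoof's count of isomorphism classes with prescribed trace and $2$-torsion structure to produce the Hurwitz class number combinations $2H^{\star}((4p-s^2)/4)+H^{\star}(4p-s^2)$ and $4H^{\star}((4p-s^2)/4)-H^{\star}(4p-s^2)$, and then correct for the extra automorphisms at $j=0$ and $j=1728$, the latter producing precisely $c^{\pm}(p,n)$.

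Your identification of the two special parameters, the observation that $\phi(-\lambda)=1$ is equivalent to full rational $2$-torsion, and the analysis of the $j=1728$ quartic twists (full $2$-torsion twists carrying trace $\pm 2a$, the others $\pm 2b$) all match what the cited sources establish. Your closing caveat is apt: the only genuinely delicate part of the argument is the automorphism bookkeeping reconciling the honest $\lambda$-count against the $h/\omega$-weights inside $H^{\star}$, and you have correctly flagged where the $c^{\pm}$ corrections enter and with which signs.
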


\begin{proof}

The sums over $\lambda$ such that $a_{\lambda}^{\Cl}(p)\neq a_{1/3}^{\Cl}(p),a_{-1/9}^{\Cl}(p)$ are given by Proposition 3.2 of \cite{BatmanPaper}. The remaining terms are computed in an analogous fashion using equations (7.21) and (7.25) of \cite{OnoFrechettePapanikolas}.
\end{proof}

\section{Background on Modular Forms}\label{SectionBackground}

To estimate the even power moments and twisted even power moments of the values $a_{\lambda}^{\Cl}(p),$ we study the weighted sums of Hurwitz class numbers that appear in Proposition~\ref{ClassNumberProp}. These weighted sums arise in the expressions of Fourier coefficients of harmonic Maass forms. To estimate these Fourier coefficients, we make use of the method of holomorphic projection to produce holomorphic cusp forms with similar coefficients. To obtain an explicit bound, we decompose these cusp forms in terms of newforms using the Petersson inner product and unfolding arguments due to Rankin and Selberg. To this end, this section recalls necessary tools from the theory of harmonic Maass forms, holomorphic modular forms, and Rankin-Selberg theory.

\subsection{Harmonic Maass Forms and Holomorphic Projection}\label{HMF&HolProj}

To exploit the connection between the weighted sums in (\ref{Eq1CN}) and (\ref{Eq2CN}) and the Fourier coefficients of harmonic Maass forms, we make use of the generating function $\mathcal{H}^+(\tau)$ of the Hurwitz class numbers. Namely, a construction of Zagier shows that $\mathcal{H}^+(\tau)$ is the {\it holomorphic part} of a harmonic Maass form $\mathcal{H}(\tau)$ of weight $\frac{3}{2}.$ The sums in equations~(\ref{Eq1CN}) and (\ref{Eq2CN}) then appear in the expressions of the Fourier coefficients of Rankin-Cohen brackets of $\mathcal{H}(\tau)$ and univariate theta functions. In this case, the holomorphic projections of these brackets are explicitly known due to work of Mertens \cite{mertensPhD}. Here we recall Zagier's construction, the definition and properties of Rankin-Cohen brackets, and Mertens' explicit expression of the holomorphic projections of these brackets.

To carry out Zagier's construction, we first recall notation from the theory of modular forms of half-integral weight. If $\gamma=\begin{pmatrix}
a & b \\
c & d
\end{pmatrix}\in\Gamma_0(4)$ and $\tau\in\HH:=\left\{x+iy\in\C, y>0\right\},$ then the automorphy factor is defined by
$$
j(\gamma,\tau):=\frac{\theta(\gamma\tau)}{\theta(\tau)},
$$
where $\gamma\tau:=\frac{a\tau+b}{c\tau+d}, q_{\tau}:=e^{2\pi i\tau},$ and
\begin{equation}
\theta(\tau):=\sum\limits_{n\in\Z} q_{\tau}^{n^2}.
\end{equation}
We recall the metaplectic extension of $\GL_2^+(\Q)$ defined as
$$
\Mp_2(\Q):=\left\{(\gamma,\phi): \gamma=\begin{pmatrix}
a & b \\ 
c & d
\end{pmatrix}\in\GL_2^+(\Q),
 \phi:\HH\to\C\text{ holomorhic, }\phi(\tau)^2=\frac{c\tau+d}{\sqrt{ad-bc}}\right\}.
$$
$\Mp_2(\Q)$ acts on $\{f:\HH\to\C\}$ through the slash operator. Namely, if $f:\HH\to\C, k$ is an integer, and $\tilde{\gamma}=\left(\begin{pmatrix}
a & b \\
c & d
\end{pmatrix},\phi\right)\in\Mp_2(\Q),$ then the (weight $\frac{k}{2}$) slash operator is defined\footnote{The dependence on $k$ is usually dropped from the notation if understood from context.} by
$$
\left(f|_{\frac{k}{2}}\tilde{\gamma}\right)(\tau)=\phi(\tau)^{-k}f\left(\frac{a\tau+b}{c\tau+d}\right).
$$
Finally, if $N$ is a positive integer with $4|N$, we define the Fricke involution 
$$
W_N:=\left(\begin{pmatrix}
0 & -1 \\
N & 0
\end{pmatrix},\sqrt{N^{1/2}\tau}\right),
$$ where $\sqrt{\cdot}$ is the principal branch of the square root. 

In this notation,  if $s\in\C$ with $\Re(s)>\frac{1}{4}$ and $\tau\in\HH,$ then define the Eisenstein series $E_{\frac{3}{2},s}(\tau)$ by
$$
E_{\frac{3}{2},s}(\tau):=\sum\limits_{\gamma\in\Gamma_\infty\backslash\Gamma_0(4)}\frac{1}{j(\gamma,\tau)^3}\cdot\Im(\gamma\tau)^s,
$$
where $\Gamma_\infty\leq \SL_2(\Z)$  is the stabilizer group of $i\infty$ and
define the Eisenstein series $F_{\frac{3}{2},s}(\tau)$ by
$$
F_{\frac{3}{2},s}(\tau):=(E_{\frac{3}{2},s}|_{\frac{3}{2}}W_4)(\tau).
$$
$E_{\frac{3}{2},s}(\tau)$ and $F_{\frac{3}{2},s}(\tau)$ are analytic maps in $s$ and have analytic continuations to $s=0,$ which we denote by $E_{\frac{3}{2}}(\tau)$ and $F_{\frac{3}{2}}(\tau)$ respectively. In this notation, Zagier proves the following theorem.

\begin{theorem}\label{ZagierHMSTheorem}\cite{Zagier}
The function
$$
\mathcal{H}(\tau):=\sum\limits_{n=0}^\infty H^\ast(n)q_\tau^n+\frac{1}{8\pi\sqrt{y}}+\frac{1}{4\sqrt{\pi}}\sum\limits_{n=1}^\infty n\Gamma(-\frac{1}{2}; 4\pi n^2y)q_{\tau}^{-n^2},
$$
where $\tau=x+iy\in \HH$ and $q_\tau:=e^{2\pi i\tau},$ is a weight $\frac{3}{2}$ harmonic Maass form with manageable growth at the cusps of $\Gamma_0(4).$ 
In fact, we have\footnote{This explicit form is computed in \cite{Zagier2}. A different normalization of $E_{\frac{3}{2}}(\tau)$ and $F_{\frac{3}{2}}(\tau)$ is used.}
$$
\mathcal{H}(\tau)=-\frac{1}{12}\left(E_{\frac{3}{2}}(\tau)+(1-i)2^{-3/2}F_{\frac{3}{2}}(\tau)\right).
$$
\end{theorem}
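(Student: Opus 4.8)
This is the classical theorem of Zagier, and the route I would follow is the Hecke-trick construction already set up above: realize $\mathcal{H}$ as the value at $s=0$ of a real-analytic weight $\frac{3}{2}$ Eisenstein series for $\Gamma_0(4)$ and then read off all of its properties from an explicit Fourier expansion. Concretely, I would start from $E_{\frac{3}{2},s}(\tau)=\sum_{\gamma\in\Gamma_\infty\backslash\Gamma_0(4)} j(\gamma,\tau)^{-3}\Im(\gamma\tau)^s$ and its Fricke transform $F_{\frac{3}{2},s}=(E_{\frac{3}{2},s}|_{\frac{3}{2}}W_4)$, both absolutely convergent for $\Re(s)>\frac{1}{4}$, and compute their Fourier expansions at the cusp $\infty$. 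Unfolding the sum over $\Gamma_\infty\backslash\Gamma_0(4)$ and applying Poisson summation, the $n$-th coefficient factors as an archimedean piece (a confluent hypergeometric function of $4\pi|n|y$ depending on $s$) times a Dirichlet series in $n$ assembled from the half-integral-weight Gauss sums carried by $j(\gamma,\tau)=\theta(\gamma\tau)/\theta(\tau)$; by the Shimura--Cohen identities that Dirichlet series is a special value of a quadratic Dirichlet $L$-function. This is exactly the step that produces the normalizing constant $(1-i)2^{-3/2}$: it is the ratio of metaplectic automorphy factors relating the cusps $\infty$ and $0$ under $W_4$.

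Next I would analytically continue each Fourier coefficient to $s=0$. For $n>0$ the archimedean factor degenerates to a constant and the $L$-value collapses, via the Hurwitz--Kronecker class number formula, to the Hurwitz class number $H(n)$; matching the constant term is where one invokes the footnote conventions $H^\ast(n)=\sum_{f^2\mid n}h(n/f^2)/\omega(n/f^2)$ and $H^\ast(0)=-\frac{1}{12}$. For $n<0$ the non-holomorphic contributions do not vanish; they assemble into $\frac{1}{8\pi\sqrt{y}}+\frac{1}{4\sqrt{\pi}}\sum_{n\ge1}n\,\Gamma(-\tfrac{1}{2};4\pi n^2 y)q_\tau^{-n^2}$, which one recognizes as the non-holomorphic Eichler integral of $\theta(\tau)=\sum_{n\in\Z}q_\tau^{n^2}$. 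Having thereby computed the expansions of $E_{\frac{3}{2}}$ and $F_{\frac{3}{2}}$ at $\infty$, the identity $\mathcal{H}=-\frac{1}{12}(E_{\frac{3}{2}}+(1-i)2^{-3/2}F_{\frac{3}{2}})$ follows by comparing Fourier coefficients; since a harmonic Maass form on $\Gamma_0(4)$ is determined by its expansion at a single cusp, this suffices. Alternatively, once both sides are known to be harmonic Maass forms of weight $\frac{3}{2}$ with the same principal parts and the same shadow, their difference lies in the one-dimensional space $M_{\frac{3}{2}}(\Gamma_0(4))=\C\,\theta^3$, and one further coefficient comparison forces it to vanish.

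Finally, harmonicity and the growth condition come essentially for free from this description. Applying $\xi_{3/2}=2iy^{3/2}\overline{\partial_{\bar\tau}}$ termwise annihilates the holomorphic $q$-series $\sum H^\ast(n)q_\tau^n$ and sends the incomplete-gamma tail to a multiple of $\theta(\tau)$ (the normalization gives $-\frac{1}{16\pi}\theta(\tau)$), so $\Delta_{3/2}\mathcal{H}=0$ and the shadow is a holomorphic weight $\frac{1}{2}$ form; the $\Gamma_0(4)$-transformation law and the ``manageable'' (at most polynomial, respectively $\sqrt{y}$-type) growth at each of the three cusps $\infty,0,\tfrac{1}{2}$ are then immediate from the Eisenstein construction together with the explicit local expansions. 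The main obstacle is the Fourier computation at $s=0$: carefully tracking the half-integral-weight Gauss sums and continuing the confluent hypergeometric function to the edge of its region of convergence, and in particular pinning down both the constant $(1-i)2^{-3/2}$ and the exact shape of the non-holomorphic completion. Everything else is formal once that computation is in hand.
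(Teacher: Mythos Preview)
The paper does not give its own proof of this theorem: it is stated as a classical result of Zagier, with citations to \cite{Zagier} for the harmonic Maass form statement and to \cite{Zagier2} (Hirzebruch--Zagier) for the explicit expression in terms of $E_{\frac{3}{2}}$ and $F_{\frac{3}{2}}$. So there is no in-paper argument to compare against.

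That said, your sketch is the standard route to Zagier's theorem and is essentially correct: analytically continue the real-analytic Eisenstein series $E_{\frac{3}{2},s}$ and $F_{\frac{3}{2},s}$ to $s=0$ via explicit Fourier expansions, identify the positive-index coefficients with Hurwitz class numbers through quadratic $L$-values, and recognize the non-holomorphic tail as the period integral of $\theta$. One small caution: your remark that ``a harmonic Maass form on $\Gamma_0(4)$ is determined by its expansion at a single cusp'' is not literally true in general (one needs control of the principal parts at all cusps, or equivalently the shadow together with one full expansion); your alternative argument via matching shadows and principal parts and then killing a residual element of $M_{3/2}(\Gamma_0(4))$ is the safe way to finish. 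Apart from that, the outline is sound and matches what one finds in the cited references.
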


To obtain the sums in equations~(\ref{Eq1CN}) and (\ref{Eq2CN}) from $\mathcal{H}(\tau),$ we take Rankin-Cohen brackets of $\mathcal{H}(\tau)$ and univariate theta functions. To make this precise, let $f$ and $g$ be smooth functions defined on $\HH$ and let $k,l\in\frac{1}{2}\N$ and $m\in\N_0.$ The $m$th Rankin-Cohen bracket (of weight $(k,l)$) of $f$ and $g$ is
$$
[f,g]_m:=\frac{1}{(2\pi i)^m}\sum\limits_{r+s=m}(-1)^r\binom{k+m-1}{s}\binom{l+m-1}{r}\frac{d^r}{d\tau^r}f\cdot\frac{d^s}{d\tau^s}g.
$$ 
As the next proposition illustrates, these operators preserve modularity.

\begin{proposition}[Th. 7.1 of \cite{cohen}]\label{BracketProposition}
Let $f$ and $g$ be (not necessarily holomorphic) modular forms of weights $k$ and $l,$ respectively on a subgroup $\Gamma$ of $\Mp_2(\Q).$ Then the following are true.

\noindent
(1) We have that $[f,g]_m$ is modular of weight $k+l+2m$ on $\Gamma.$ 

\noindent
(2) If $\tilde{\gamma}\in \Mp_2(\Q),$ then
under the usual modular slash operator we have
$$
[f|_k\tilde{\gamma},g|_l\tilde{\gamma}]_m=([f,g]_m)|_{k+l+2m}\tilde{\gamma}.
$$
\end{proposition}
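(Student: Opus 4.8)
\noindent\emph{Proof proposal.} The plan is to deduce part (1) from part (2), and to prove part (2) by the Cohen--Kuznetsov generating-series method, which is the standard route to this theorem of Cohen.

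First I would note that (2) implies (1): saying that $f$, $g$ are modular of weights $k$, $l$ on $\Gamma$ means $f|_k\tilde\gamma=f$ and $g|_l\tilde\gamma=g$ for every $\tilde\gamma\in\Gamma$, so (2) gives $[f,g]_m|_{k+l+2m}\tilde\gamma=[f|_k\tilde\gamma,g|_l\tilde\gamma]_m=[f,g]_m$; the behaviour of $[f,g]_m$ at the cusps is governed by that of $f$, $g$ and finitely many of their $\tau$-derivatives, which remain of the required growth type (moderate, resp.\ manageable), so $[f,g]_m$ is again a modular form of weight $k+l+2m$. It thus suffices to prove (2). I would also observe that (2) is really an identity of differential operators that should hold for \emph{arbitrary} smooth $f,g$ on $\HH$ --- the slash operators $|_\kappa\tilde\gamma$ require no modularity --- and that, since $\gamma\in\GL_2^+(\R)$ acts holomorphically on $\HH$, the holomorphic derivative $\tfrac{d}{d\tau}=\partial_\tau$ obeys the usual chain rule through $\gamma$; hence the possible non-holomorphy of $f$ and $g$ causes no difficulty.

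Next I would introduce, for smooth $h$ on $\HH$ and $\kappa\in\tfrac12\N$, the Cohen--Kuznetsov series
$$
\Phi_\kappa(h)(\tau,X):=\sum_{n=0}^\infty\frac{h^{(n)}(\tau)}{n!\,(\kappa)_n}\,X^n,\qquad (\kappa)_n:=\kappa(\kappa+1)\cdots(\kappa+n-1),
$$
where $h^{(n)}=(d/d\tau)^n h$, and prove the transformation lemma: for $\tilde\gamma=(\gamma,\phi)\in\Mp_2(\Q)$ with $\gamma=\left(\begin{smallmatrix}a&b\\c&d\end{smallmatrix}\right)$ and $\Delta:=\det\gamma$,
$$
\Phi_\kappa(h)\!\left(\gamma\tau,\ \frac{\Delta X}{(c\tau+d)^2}\right)=\phi(\tau)^{2\kappa}\,\exp\!\left(\frac{cX}{c\tau+d}\right)\,\Phi_\kappa\!\left(h|_\kappa\tilde\gamma\right)(\tau,X).
$$
This is proved by differentiating $h(\gamma\tau)=\phi(\tau)^{2\kappa}(h|_\kappa\tilde\gamma)(\tau)$ repeatedly and reorganizing via the Leibniz rule and $\tfrac{d}{d\tau}(\gamma\tau)=\Delta(c\tau+d)^{-2}$ (a finite induction on the number of differentiations). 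I would then apply the lemma to $f$ with internal variable $-X$ and to $g$ with internal variable $X$ and multiply: the exponential factors $\exp(\mp cX/(c\tau+d))$ cancel, leaving
$$
\Phi_k(f)\!\left(\gamma\tau,\tfrac{-\Delta X}{(c\tau+d)^2}\right)\Phi_l(g)\!\left(\gamma\tau,\tfrac{\Delta X}{(c\tau+d)^2}\right)=\phi(\tau)^{2(k+l)}\,\Phi_k\!\left(f|_k\tilde\gamma\right)(\tau,-X)\,\Phi_l\!\left(g|_l\tilde\gamma\right)(\tau,X).
$$
Extracting the coefficient of $X^m$ from both sides, using $(c\tau+d)^{2m}=\Delta^m\phi(\tau)^{4m}$ to cancel the scalar $\Delta^m$, and identifying (via $\binom{\kappa+m-1}{s}=(\kappa)_m/(s!\,(\kappa)_{m-s})$) the $X^m$-coefficient of $\Phi_k(w)(\tau,-X)\Phi_l(z)(\tau,X)$ with the fixed nonzero multiple $(2\pi i)^m/((k)_m(l)_m)$ of $[w,z]_m$, one arrives exactly at $[f|_k\tilde\gamma,g|_l\tilde\gamma]_m=[f,g]_m|_{k+l+2m}\tilde\gamma$, which is (2).

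The main obstacle will be proving the Cohen--Kuznetsov lemma itself in the metaplectic, half-integral-weight setting: identifying the exact exponential correction that repeated differentiation introduces and verifying that the automorphy factors $\phi(\tau)$ and the determinant scalars $\Delta=\det\gamma$ bookkeep correctly --- the conceptual point being that the terms obstructing modularity of the individual derivatives $f^{(r)}$, $g^{(s)}$ are precisely those that cancel in the weighted combination defining the bracket. Everything after that is formal manipulation of power series. (Alternatively, since the set of $\tilde\gamma$ for which (2) holds is a subgroup of $\Mp_2(\Q)$, one may check (2) only on a generating set: it is immediate for translations and scalings and reduces to the case $\gamma=\left(\begin{smallmatrix}0&-1\\1&0\end{smallmatrix}\right)$, i.e.\ $\tau\mapsto-1/\tau$, handled by the same computation; or one may induct on $m$ via a recursion for the Rankin--Cohen brackets. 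The generating-series argument is the most transparent.)
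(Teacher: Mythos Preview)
The paper does not supply its own proof of this proposition: it is quoted as Theorem~7.1 of Cohen's 1975 paper and used as a black box. So there is no argument in the paper to compare against.

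Your proposal is correct and is the standard modern route to Cohen's theorem. The Cohen--Kuznetsov generating series identity encapsulates exactly the cancellation that makes the bracket equivariant, and your reduction of (1) to (2) is clean. A couple of minor remarks: first, Cohen's original 1975 argument predates the generating-series packaging (that formulation is due to Zagier's later exposition), and proceeds instead by a direct induction/computation showing that the specific binomial weights kill the non-modular ``lower-order'' terms arising from $\tfrac{d}{d\tau}\bigl((c\tau+d)^{-\kappa}\bigr)$; your alternative suggestion at the end (check on generators, or induct on $m$) is closer in spirit to that. Second, in part (1) you should be slightly careful with the phrase ``modular form'': the paper is using this loosely to include non-holomorphic objects such as $\mathcal{H}(\tau)$, so the growth condition at cusps you need to verify is the ``manageable/moderate growth'' one rather than holomorphy there, which indeed follows from the corresponding property for $f$, $g$ and their derivatives as you say. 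Otherwise the argument is complete.
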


In this notation, the weighted class number sums appear in Fourier coefficients of $[\mathcal{H}(\tau),\theta(\tau)]_m$ and $[\mathcal{H}(\tau),\theta(4\tau)]_m.$ More precisely, if $\mathcal{H}^+(\tau):=\sum\limits_{n=0}^\infty H^\ast(n)q_{\tau}^n$ denotes the generating function of the Hurwitz class numbers, then we have the following lemma.

\begin{lemma}\label{ThetaBracketsLemma}
	
The following are true.
	
	\noindent
	(1) Let $
	[\mathcal{H}^+(\tau),\theta(\tau)]_m=\sum\limits_{n\geq 0}a_m(n)q_{\tau}^n.
	$
	Then, there exists constants $A(l,m,p)$ such that
	$$
	a_m(p)=\sum\limits_{l=0}^m A(l,m,p)\sum\limits_{\substack{0<s<2\sqrt{p} \\ 2\mid s}}H^\star\left(\frac{4p-s^2}{4}\right)s^{2l}.
	$$

	\noindent
	(2) Let  $[\mathcal{H}^+(\tau),\theta(4\tau)]_m=\sum\limits_{n\geq 0}b_m(n)q_{\tau}^n.$ 
	Then, there exists constants $B(l,m,p)$ such that
	$$
	b_m(4p)=\sum\limits_{l=0}^m B(l,m,p)\sum\limits_{\substack{0<s<2\sqrt{p} \\ 2\mid s}} H^\star(4p-s^2)s^{2l}.
	$$
\end{lemma}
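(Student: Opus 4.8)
The plan is to unwind the definition of the Rankin-Cohen bracket and match Fourier coefficients on the $q$-expansion side, using the shape of the coefficients of $\mathcal{H}^+(\tau)$ and of the theta functions. First I would write $\mathcal{H}^+(\tau)=\sum_{n\ge 0}H^\ast(n)q_\tau^n$ and $\theta(\tau)=\sum_{j\in\Z}q_\tau^{j^2}$, and recall that differentiation $\frac{d}{d\tau}=\frac{1}{2\pi i}q_\tau\frac{d}{dq_\tau}$ acts on a $q$-expansion $\sum c(n)q_\tau^n$ by multiplying the $n$-th coefficient by $n$ (up to the normalizing $2\pi i$, which is exactly what the $\frac{1}{(2\pi i)^m}$ prefactor in the bracket is designed to absorb). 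Hence the $m$-th Rankin-Cohen bracket $[\mathcal H^+,\theta]_m$ has $q_\tau^N$-coefficient equal to a fixed linear combination, with coefficients depending only on $k=\tfrac32$, $l=\tfrac12$, $m$, of terms $r^r s^s$-weighted products $\binom{k+m-1}{s}\binom{l+m-1}{r}(-1)^r\, H^\ast(a)\,a^{\,?}\cdot (\text{theta coeff})\,b^{\,?}$ summed over $a+b=N$. More precisely, extracting the $q_\tau^p$ coefficient gives
$$
a_m(p)=\sum_{r+s=m}(-1)^r\binom{k+m-1}{s}\binom{l+m-1}{r}\sum_{\substack{a+j^2=p\\ a\ge 0,\ j\in\Z}} H^\ast(a)\,a^r\,(j^2)^s .
$$

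Next I would reorganize this double sum. Writing $p=a+j^2$ with $j\in\Z$, the condition $a\ge 0$ together with $a=p-j^2$ forces $|j|<\sqrt p$ (and $j=0$ contributes the term $H^\ast(p)p^m$, which I would either fold into the $H^\ast$ notation or note separately). Put $s_{\!*}=2j$ when convenient; here the cleaner route is to keep the variable as $j$ and then substitute. The point is that $H^\ast(a)=H^\ast(p-j^2)$, and $4p-s^2=4(p-j^2)$ exactly when $s=2j$; this is precisely the substitution that converts $\sum_{|j|<\sqrt p}H^\ast(p-j^2)(j^2)^s$ into $\tfrac12\sum_{0<s<2\sqrt p,\ 2\mid s}H^\ast\!\big(\tfrac{4p-s^2}{4}\big)(s/2)^{2s}$ after folding the $\pm j$ symmetry (the $j=0$ term is the degenerate boundary case). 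Collecting powers, each monomial $(j^2)^s=(s/2)^{2s}$ and the factor $a^r=(p-j^2)^r=\sum_{i}\binom{r}{i}p^{r-i}(-1)^i(j^2)^i$ expands into powers $(j^2)^{i}$, i.e. powers $(s/2)^{2i}$ with $i\le r\le m$. Hence after regrouping by the exponent $2l$ of $s$ (with $0\le l\le m$), we get
$$
a_m(p)=\sum_{l=0}^m A(l,m,p)\sum_{\substack{0<s<2\sqrt p\\ 2\mid s}}H^\ast\!\left(\tfrac{4p-s^2}{4}\right)s^{2l},
$$
where $A(l,m,p)$ is the explicit (rational, $p$-polynomial) coefficient assembled from the binomial coefficients $\binom{k+m-1}{s}$, $\binom{l+m-1}{r}$, the binomial expansion of $(p-j^2)^r$, and the powers of $2$ from $j=s/2$. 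This proves part (1). Part (2) is entirely parallel: $\theta(4\tau)=\sum_{j\in\Z}q_\tau^{4j^2}$, so extracting the $q_\tau^{4p}$-coefficient of $[\mathcal H^+,\theta(4\cdot)]_m$ gives $\sum_{r+s=m}(-1)^r\binom{k+m-1}{s}\binom{l+m-1}{r}\sum_{a+4j^2=4p}H^\ast(a)a^r(4j^2)^s$; here $a=4p-4j^2=4(p-j^2)$ and the substitution $s=2j$ turns $4j^2$ into $s^2$ and $a$ into $4p-s^2$, yielding $b_m(4p)=\sum_{l=0}^m B(l,m,p)\sum_{0<s<2\sqrt p,\ 2\mid s}H^\ast(4p-s^2)s^{2l}$ after the same regrouping.

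The only real bookkeeping obstacle is keeping track of the degenerate terms and the range of summation: the $j=0$ term gives $H^\ast(p)p^m$ (resp. $H^\ast(4p)p^m$), and since $p\ge 5$ is prime, $p$ is not a square, so these are genuine Hurwitz class numbers and are harmless; and one must check that the outer sum really does start at $s>0$ with $s$ even, which is automatic once $s=2j$ and the $j\leftrightarrow -j$ symmetry is used to restrict to $j>0$ (doubling the contribution, which is absorbed into $A(l,m,p)$ and $B(l,m,p)$). Since the statement only asserts the \emph{existence} of the constants $A(l,m,p)$ and $B(l,m,p)$, no closed form is needed; it suffices to exhibit them as finite sums of products of binomial coefficients and powers of $p$ and $2$, which the above expansion does explicitly. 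I would remark in passing that $A(l,m,p)$ and $B(l,m,p)$ are polynomials in $p$ of degree at most $m-l$, a fact that is not needed here but is convenient for later size estimates.
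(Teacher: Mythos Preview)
Your approach---directly expanding the Rankin--Cohen bracket on the $q$-expansion side, pairing $\pm j$, substituting $s=2j$, and regrouping by powers of $s$---is exactly what the paper does; its entire proof is the single sentence ``This follows directly from the definition of the Rankin--Cohen bracket,'' with the explicit constants deferred to Proposition~\ref{ChebyshevProp}. Your flagging of the $j=0$ boundary contribution is apt: the lemma is stated somewhat informally on this point, and the paper's precise formulation in Proposition~\ref{ChebyshevProp} indeed sums over all $s\in\Z$, which accommodates that term.
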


	This follows directly from the definition of the Rankin-Cohen bracket. The constants $A(l,m,p)$ and $B(l,m,p)$ are  expressed explicitly in Proposition~\ref{ChebyshevProp}.

To estimate the Fourier coefficients of $[\mathcal{H}(\tau),\theta(\tau)]_m$ and $[\mathcal{H}(\tau),\theta(4\tau)]_m$, we produce a holomorphic modular form with similar Fourier coefficients by the method of holomorphic projection. To make this precise, suppose $h:\HH\to\C$ is a (not necessarily holomorphic) modular form of weight $k>2$ on $\Gamma_0(N)$ with Fourier expansion
$$
h(\tau)=\sum\limits_{n\in\Z} c_h(n,y)q_{\tau}^n,
$$
where $\tau=x+iy.$ Further, suppose that $h(\tau)$ has moderate growth at cusps, with $c_h(0,y)=c_0+O(y^{-\varepsilon})$ for some $\varepsilon>0.$ Then, the holomorphic projection of $h(\tau)$ is defined by
$$
(\pi_{\hol} h)(\tau):=c_0+\sum\limits_{n\geq 1} c(n)q_{\tau}^n,
$$
where
$$
c(n)=\frac{(4\pi n)^{k-1}}{(k-2)!}\int_{0}^{\infty}c_h(n,y)e^{-4\pi ny}y^{k-2}dy,
$$
for $n\geq 1.$  The following proposition explains the importance of the holomorphic projection operator.

\begin{proposition}\label{HolProjProp}[see Section 10.1 of \cite{HMFOnoBook}]
Assuming the hypothesis above, the following are true.

\noindent
(1) $\pi_{\hol}(f)\in\mathcal{M}_k(\Gamma_0(N)).$ 

\noindent
(2) If $g\in\mathcal{S}_k(\Gamma_0(N)),$ then we have
$
\langle f,g\rangle=\langle \pi_{\hol}(f),g\rangle,
$
whenever the left-hand side converges.
\end{proposition}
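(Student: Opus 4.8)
The plan is to prove this by the classical holomorphic projection argument of Sturm, establishing (2) and deducing (1) along the way. The central objects are the Poincar\'e series: for each integer $n\geq 1$ I would set
$$
P_{k,n}(\tau):=\sum_{\gamma\in\Gamma_\infty\backslash\Gamma_0(N)}\left(q_\tau^n\right)\big|_k\gamma,
$$
which converges absolutely and locally uniformly precisely because $k>2$, so $P_{k,n}\in\mathcal{S}_k(\Gamma_0(N))$. First I would record the two standard facts about these series: a short unfolding computation shows they reproduce Fourier coefficients, $\langle g,P_{k,n}\rangle=\frac{(k-2)!}{(4\pi n)^{k-1}}b_g(n)$ for every $g=\sum_{m\geq 1}b_g(m)q_\tau^m\in\mathcal{S}_k(\Gamma_0(N))$, and as a consequence the $P_{k,n}$ span $\mathcal{S}_k(\Gamma_0(N))$.

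The key computation is to unfold $\langle h,P_{k,n}\rangle$ directly against $h$. Unfolding the Poincar\'e series against the fundamental domain collapses the integral to the strip $\{0<x<1,\ y>0\}$, and the $x$-integration isolates the $n$-th Fourier coefficient of $h$, giving
$$
\langle h,P_{k,n}\rangle=\int_0^\infty c_h(n,y)\,e^{-4\pi n y}\,y^{k-2}\,dy=\frac{(k-2)!}{(4\pi n)^{k-1}}\,c(n),
$$
the last step being the definition of $c(n)$. Along the way this shows the integrals $c(n)$ converge and are $O(n^{k-1+\varepsilon})$, so $c_0+\sum_{n\geq 1}c(n)q_\tau^n$ is a holomorphic function on $\HH$. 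To prove (1) I would then fix an orthonormal basis $g_1,\dots,g_d$ of $\mathcal{S}_k(\Gamma_0(N))$, form the genuine orthogonal projection $P:=\sum_j\langle h,g_j\rangle g_j\in\mathcal{S}_k(\Gamma_0(N))$, expand each $P_{k,n}$ in this basis, and apply the reproducing property to get $\langle h,P_{k,n}\rangle=\frac{(k-2)!}{(4\pi n)^{k-1}}b_P(n)$; comparing with the unfolding identity forces $b_P(n)=c(n)$ for all $n\geq 1$. Thus $\pi_{\hol}(h)$ and $c_0+P$ have the same positive Fourier coefficients, and it remains only to check that their constant terms agree and that $c_0+P$ is genuinely modular; the cuspidal part $P$ already is, so this reduces to a statement about the constant/Eisenstein contribution. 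Statement (2) then follows immediately: any $g\in\mathcal{S}_k(\Gamma_0(N))$ is a linear combination of the $P_{k,n}$, and for each of them both $\langle h,P_{k,n}\rangle$ and $\langle\pi_{\hol}(h),P_{k,n}\rangle$ equal $\frac{(k-2)!}{(4\pi n)^{k-1}}c(n)$, by the unfolding identity and the reproducing property respectively, so by linearity $\langle h,g\rangle=\langle\pi_{\hol}(h),g\rangle$ whenever the left-hand side converges.

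The parts requiring real care, rather than bookkeeping, are twofold. First, the unfolding step needs a rigorous justification of the interchange of the sum over $\Gamma_\infty\backslash\Gamma_0(N)$ with the integral over the fundamental domain, together with convergence of the resulting $y$-integral at both ends; this is exactly where $k>2$ and the moderate growth of $h$ at every cusp are used. Second, upgrading the formal $q$-series $\pi_{\hol}(h)$ to an honest element of $\mathcal{M}_k(\Gamma_0(N))$ means controlling the non-cuspidal part: one must verify that the constant term $c_0$ (and, after conjugating by scaling matrices, the behavior at the remaining cusps) is consistent with a weight-$k$ modular form, which is precisely what the refined hypothesis $c_h(0,y)=c_0+O(y^{-\varepsilon})$ is designed to guarantee. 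I expect this second point --- making the modularity of $\pi_{\hol}(h)$ rigorous rather than merely producing a $q$-series with the right coefficients --- to be the main obstacle; the complete details are carried out in \cite[\S10.1]{HMFOnoBook}, following Sturm.
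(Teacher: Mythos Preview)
The paper does not give its own proof of this proposition; it simply records the statement and points to \cite[\S10.1]{HMFOnoBook}. Your sketch is the standard Sturm/Poincar\'e-series argument that reference carries out, so your approach is correct and matches what the paper defers to.
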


The following theorem of Mertens explicitly computes the holomorphic projection of the Rankin-Cohen brackets of $\mathcal{H}(\tau)$ with a univariate theta function of weight $\frac{1}{2}.$

\begin{theorem}\label{HolProjTh}[Th. V.2.1 of \cite{mertensPhD}]
If $m$ and $s$ are positive integers, then we have
$$
\pi_{\hol}\left([\mathcal{H}(\tau),\theta(s\tau)]_m\right)=[\mathcal{H}^+(\tau),\theta(s\tau)]_m+\frac{1}{2}\cdot\frac{\binom{2m}{m}}{4^m}\Lambda_s(\tau;m),
$$
where
$$
\Lambda_s(\tau;m)=2\sum\limits_{n\geq 1}\left(\sum\limits_{\substack{st^2-r^2=n \\ t,r\geq 1}}(\sqrt{s}t-r)^{2m+1}\right)q_{\tau}^n+\sum\limits_{n\geq 1}(\sqrt{s}n)^{2m+1}q_{\tau}^{sn^2},
$$
and $\mathcal{H}^+(\tau):=\sum\limits_{n=0}^\infty H^\ast(n)q_{\tau}^n.$
\end{theorem}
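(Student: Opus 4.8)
The plan is to split $\mathcal{H}$ into its holomorphic and non-holomorphic parts, observe that the holomorphic part survives $\pi_{\hol}$ untouched, and then compute the holomorphic projection of the non-holomorphic contribution by a term-by-term Fourier-coefficient calculation.

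\textbf{Reduction.} Write $\mathcal{H}(\tau)=\mathcal{H}^+(\tau)+\mathcal{H}^-(\tau)$, where $\mathcal{H}^+(\tau)=\sum_{n\ge0}H^\ast(n)q_\tau^n$ and $\mathcal{H}^-(\tau)=\frac{1}{8\pi\sqrt y}+\frac{1}{4\sqrt\pi}\sum_{n\ge1}n\,\Gamma(-\tfrac12;4\pi n^2y)q_\tau^{-n^2}$. By Proposition~\ref{BracketProposition}, $[\mathcal{H},\theta(s\tau)]_m$ is modular of integral weight $k=\tfrac32+\tfrac12+2m=2m+2$ on $\Gamma_0(4s)$, and since $m\ge1$ we have $k>2$; by Theorem~\ref{ZagierHMSTheorem} it has moderate growth at the cusps (because $\mathcal{H}$ has manageable growth and $\theta(s\tau)$ is bounded), and its constant Fourier coefficient is $O(y^{-1/2})$, so Proposition~\ref{HolProjProp} applies. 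Since $[\,\cdot\,,\theta(s\tau)]_m$ is bilinear and the coefficient-wise integral transform defining $\pi_{\hol}$ is linear,
$$\pi_{\hol}\big([\mathcal{H},\theta(s\tau)]_m\big)=\pi_{\hol}\big([\mathcal{H}^+,\theta(s\tau)]_m\big)+\pi_{\hol}\big([\mathcal{H}^-,\theta(s\tau)]_m\big).$$
The bracket $[\mathcal{H}^+,\theta(s\tau)]_m$ is holomorphic with $y$-independent Fourier coefficients $a_m(n)$; feeding $c_h(n,y)=a_m(n)$ into the projection integral and using $\int_0^\infty e^{-4\pi ny}y^{2m}\,dy=(2m)!\,(4\pi n)^{-(2m+1)}$ returns exactly $a_m(n)$, so $\pi_{\hol}([\mathcal{H}^+,\theta(s\tau)]_m)=[\mathcal{H}^+,\theta(s\tau)]_m$. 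Hence the theorem reduces to proving $\pi_{\hol}\big([\mathcal{H}^-,\theta(s\tau)]_m\big)=\tfrac12\binom{2m}{m}4^{-m}\,\Lambda_s(\tau;m)$.

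\textbf{The non-holomorphic contribution.} To expand $[\mathcal{H}^-,\theta(s\tau)]_m$ in a Fourier series I would use $\frac{d^j}{d\tau^j}\theta(s\tau)=(2\pi i)^j\sum_{t\in\Z}(st^2)^jq_\tau^{st^2}$ together with $\frac{d}{dy}\Gamma(-\tfrac12;4\pi u^2y)=-(4\pi u^2)^{-1/2}y^{-3/2}e^{-4\pi u^2y}$, which shows that every $\tau$-derivative of $\mathcal{H}^-$ is, term by term, a finite $\Q[y^{-1}]$-combination of $y^{-1/2}$ and of $\Gamma(-\tfrac12;4\pi u^2y)e^{2\pi u^2y}q_\tau^{-u^2}$ and $y^{-1/2}e^{-2\pi u^2y}q_\tau^{-u^2}$. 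Collecting the coefficient of $q_\tau^n$ in the product produces a sum over the loci $\{\,st^2-u^2=n,\ t,u\ge1\,\}$ and $\{\,st^2=n,\ t\ge1\,\}$ (the latter coming from the $\frac{1}{8\pi\sqrt y}$ term) whose coefficients are polynomials in $t,u$ --- built from the Rankin--Cohen binomials together with the factors $(st^2)^s$ and the constants produced by differentiation --- times elementary functions of $y$. Substituting this $c_h(n,y)$ into $c(n)=\frac{(4\pi n)^{2m+1}}{(2m)!}\int_0^\infty c_h(n,y)e^{-4\pi ny}y^{2m}\,dy$ reduces the problem to Gamma-integrals together with incomplete-Gamma integrals $\int_0^\infty\Gamma(-\tfrac12;Ay)e^{-By}y^c\,dy$, which evaluate to finite hypergeometric expressions in the ratio $u^2/(st^2)$. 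Summing over the Rankin--Cohen range $r+s=m$ with the binomial weights, a terminating hypergeometric summation (Chu--Vandermonde / Pfaff--Saalsch\"utz) collapses everything to $\tfrac12\binom{2m}{m}4^{-m}(\sqrt s\,t-u)^{2m+1}$ on the first locus and to $\tfrac12\binom{2m}{m}4^{-m}(\sqrt s\,n)^{2m+1}$ on the second --- that is, to $\tfrac12\binom{2m}{m}4^{-m}\Lambda_s(\tau;m)$.

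\textbf{Main obstacle, and a cleaner route.} The delicate step is the last one: tracking the constants ($8\pi$, $4\sqrt\pi$, the Rankin--Cohen binomials, $(4\pi n)^{2m+1}/(2m)!$) through the incomplete-Gamma integrals and showing that the sum over $r+s=m$ telescopes to the single clean binomial $\binom{2m}{m}/4^m$ and the clean odd power $(\sqrt s\,t-u)^{2m+1}$, with the various individually-unwieldy pieces coming from $\frac{1}{8\pi\sqrt y}$ and from differentiating the incomplete Gammas organized so that only the intended combination survives. A conceptually cleaner organization (essentially Mertens' argument) exploits that $\mathcal{H}$ is harmonic with shadow $\xi_{3/2}\mathcal{H}$ equal to a constant multiple of $\theta(\tau)$, so that $\mathcal{H}^-(\tau)$ is the associated non-holomorphic Eichler integral $\propto\int_{-\overline\tau}^{i\infty}\theta(z)(z+\tau)^{-3/2}\,dz$; then $[\mathcal{H}^-,\theta(s\tau)]_m$ is a period integral whose holomorphic projection can be obtained by pairing against the weight-$(2m+2)$ Poincar\'e series and unfolding (Rankin--Selberg), and the polynomial $(\sqrt s\,t-u)^{2m+1}$ appears as the Cohen kernel attached to the Rankin--Cohen bracket of the two weight-$\tfrac12$ theta series $\theta(\tau)$ and $\theta(s\tau)$.
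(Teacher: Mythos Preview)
The paper does not prove this theorem; it is quoted from Mertens' thesis \cite{mertensPhD} and used as a black box. So there is no in-paper proof to compare against, only Mertens' original argument.

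Your strategy---splitting $\mathcal{H}=\mathcal{H}^++\mathcal{H}^-$, observing that $\pi_{\hol}$ fixes the holomorphic piece, and then computing $\pi_{\hol}([\mathcal{H}^-,\theta(s\tau)]_m)$ coefficient by coefficient via the defining integral---is exactly Mertens' approach, and the alternative route you sketch at the end (exploiting that $\mathcal{H}^-$ is the non-holomorphic Eichler integral of $\theta$) is also essentially how the computation is organized in \cite{mertensPhD}. So at the level of architecture you are on target.

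The genuine gap is that your proposal is an outline, not a proof: the paragraph labeled ``The non-holomorphic contribution'' asserts that the incomplete-Gamma integrals evaluate to hypergeometric expressions and that a terminating summation collapses the Rankin--Cohen sum to $\tfrac12\binom{2m}{m}4^{-m}(\sqrt{s}\,t-u)^{2m+1}$, but none of these evaluations are actually carried out, and you yourself flag this as ``the delicate step.'' This is not a minor bookkeeping issue---it is the entire content of the theorem. In Mertens' proof the work is precisely in evaluating $\int_0^\infty \Gamma(-\tfrac12;4\pi u^2 y)e^{-4\pi(n+u^2)y}y^{2m}\,dy$ (via integration by parts or the integral representation of $\Gamma(a;x)$), tracking the polynomial factors coming from differentiating $\mathcal{H}^-$ and $\theta(s\tau)$, and then recognizing the resulting binomial sum. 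Until you write those steps down and verify the constants, what you have is a plausible plan rather than a proof.
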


\subsection{Holomorphic Modular Forms}\label{SubsModForms}

To obtain an explicit estimate of the Fourier coefficients of   $\pi_{\hol}([\mathcal{H}(\tau),\theta(\tau)]_m)$ and $\pi_{\hol}([\mathcal{H}(\tau),\theta(4\tau)]_m),$ we decompose these cusp forms in terms of newforms. Here we give definitions and recall fundamental facts about holomorphic modular forms of integral weight.

If $k$ and $N$ are positive integers, then denote by $\mathcal{M}_k(\Gamma_0(N))$ the vector space of holomorphic modular forms of weight $k$ and level $N.$ The cuspidal subspace of $\mathcal{M}_k(\Gamma_0(N))$ is denoted by $\mathcal{S}_k(\Gamma_0(N)).$

$\GL_2^+(\Q)$ acts on $\{f:\HH\to\C\}$ through the slash operator. Namely, if $f:\HH\to\C, k$ is an integer, and $\gamma=\begin{pmatrix}
a & b \\
c & d
\end{pmatrix}\in\GL_2^+(\Q),$ then the (weight $k$) slash operator is defined by
$$
(f|_k\gamma)(\tau)=(ad-bc)^{k/2}(c\tau+d)^{-k}f\left(\frac{a\tau+b}{c\tau+d}\right).
$$
In our setting, two matrices in $\GL_2^+(\Q)$ play a significant role, namely, the matrix
$
V(d):=\begin{pmatrix}
d & 0 \\
0 & 1
\end{pmatrix}
$
and the Fricke involution
$
W_N:=\begin{pmatrix}
0 & -1 \\
N & 0
\end{pmatrix}.
$ The actions of these matrices send cusp forms to cusp forms of possibly higher level. More precisely, if $d$ is a positive integer and $f\in\mathcal{S}_k(\Gamma_0(N)),$ then $f|V(d)\in\mathcal{S}_k(\Gamma_0(dN))$ and $f|W_N\in\mathcal{S}_k(\Gamma_0(N)).$ Furthermore, if $f$ is a cusp form with Fourier series  
$
f(\tau)=\sum\limits_{n\geq 1} a_f(n)q_{\tau}^n,
$
where $q_{\tau}=e^{2\pi i\tau},$ then we write
$$
(f|U(d))(\tau):=\sum\limits_{n\geq 1} a_f(nd)q_{\tau}^n.
$$

The vector space of cusp forms can be given the structure of a finite-dimensional Hilbert space using the Petersson inner product. To make this precise, if $f,g\in\mathcal{S}_k(\Gamma)$ for some subgroup $\Gamma$ of $\SL_2(\Z)$ of finite index, then define\footnote{We drop the $\Gamma$ when it is understood from context.} the Petersson inner product\footnote{There are different normalizations of the Petersson inner product.} of $f$ and $g$ by
$$
\langle f,g\rangle_{\Gamma}:=\frac{1}{[\SL_2(\Z):\Gamma]}\iint_{\HH/\Gamma} f(x+iy)\overline{g(x+iy)}y^k\frac{dxdy}{y^2}.
$$
The slash operator is an isometry of Hilbert spaces. In other words, if $\Gamma\subset\SL_2(\Z)$ is a subgroup of finite index and $\gamma\in\GL_2^+(\Q),$ then
$$
\langle f,g\rangle_\Gamma=\langle f|\gamma,g|\gamma\rangle_{\Gamma'},
$$
where $\Gamma'=\gamma^{-1}\Gamma \gamma\cap\Gamma.$ In particular, this proves that if $N$ is a positive integer and $f,g\in\mathcal{S}_k(\Gamma_0(N)),$ then 
$$
\langle f,g\rangle=\langle f|W_N,g|W_N\rangle.
$$

As seen above, modular forms of a level $N$ can be obtained by applying $V(d)$ operators to modular forms of lower level. This decomposes the space $\mathcal{S}_k(\Gamma_0(N))$ into an old subspace and a new subspace. To make this precise, we define the space of oldforms
$$
\mathcal{S}_k^{\text{old}}(\Gamma_0(N)):=\sum\limits_{\substack{M<N \\ d|N/M}} \mathcal{S}_k(\Gamma_0(M))|V(d).
$$
Naturally, we define the new subspace $\mathcal{S}_k^{\text{new}}(\Gamma_0(N))$ as the orthogonal complement of $\mathcal{S}_k^{\text{old}}(\Gamma_0(N))$ in $\mathcal{S}_k(\Gamma_0(N)).$ An important property of the space $\mathcal{S}_k^{\text{new}}(\Gamma_0(N))$ is that it has a basis of eigenforms of {\it all} Hecke operators. If $f(\tau)=\sum\limits_{n\geq 1}a_f(n)q_{\tau}^n$ is such an eigenform with $a_f(1)=1,$ we call $f$ a normalized newform. 

The decomposition of $\mathcal{S}_k(\Gamma_0(N))$ in terms of normalized newforms is crucial to prove Theorem~\ref{MainTheorem}. Namely, a celebrated theorem of Deligne explicitly bounds the Fourier coefficients of normalized newforms.

\begin{theorem}\label{DeligneTheorem}\cite{deligne}
If $f(\tau)=\sum\limits_{n\geq 1}a_f(n)q_{\tau}^n\in\mathcal{S}_k^{\text{new}}(\Gamma_0(N))$ is a normalized newform, then $|a_f(n)|\leq d(n)n^{\frac{k-1}{2}},$ where $d(n)$ is the divisor function.
\end{theorem}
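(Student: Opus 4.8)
The statement is Deligne's bound on the Fourier coefficients of normalized newforms, a deep theorem whose proof ultimately rests on the Weil conjectures; I only sketch the structure. The plan is to reduce, via Hecke multiplicativity, to the Ramanujan--Petersson bound at primes. Since $f$ is a normalized newform, the coefficients $a_f(n)$ are multiplicative and satisfy the Hecke recursions, so it suffices to prove $|a_f(p^r)|\le (r+1)p^{r(k-1)/2}$ for every prime $p$ and every $r\ge 1$; taking the product over the prime powers exactly dividing $n$ then gives $|a_f(n)|\le d(n)n^{(k-1)/2}$.

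For $p\nmid N$ one has the recursion $a_f(p^{r+1})=a_f(p)a_f(p^r)-p^{k-1}a_f(p^{r-1})$, so it is enough to prove the Ramanujan--Petersson inequality $|a_f(p)|\le 2p^{(k-1)/2}$: writing $a_f(p)=p^{(k-1)/2}(\alpha_p+\overline{\alpha_p})$ with $|\alpha_p|=1$, an easy induction gives $a_f(p^r)=p^{r(k-1)/2}\sum_{j=0}^{r}\alpha_p^{\,j}\overline{\alpha_p}^{\,r-j}$, whence $|a_f(p^r)|\le(r+1)p^{r(k-1)/2}$. To establish $|a_f(p)|\le 2p^{(k-1)/2}$, I would attach to $f$ the compatible system of $\ell$-adic Galois representations $\rho_{f,\ell}\colon\Gal(\overline{\Q}/\Q)\to\GL_2(\overline{\Q}_\ell)$ realized in \'etale cohomology: for $k=2$ this is (the dual of) the $\ell$-adic Tate module of the modular abelian variety attached to $f$ inside $\Jac(X_0(N))$, and for general $k$ it is the $f$-isotypic piece, cut out by Hecke correspondences, of $H^{k-1}_{\mathrm{et}}$ of the $(k-2)$-fold fibred self-product of the universal elliptic curve over the modular curve of level $N$. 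For $p\nmid N\ell$ this representation is unramified at $p$, and the Eichler--Shimura congruence relation (respectively its higher-weight analogue, due to Deligne) shows $\operatorname{tr}\rho_{f,\ell}(\Frob_p)=a_f(p)$ and $\det\rho_{f,\ell}(\Frob_p)=p^{k-1}$.

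The decisive input is now Deligne's proof of the Weil conjectures: the cohomology in which $\rho_{f,\ell}$ sits is pure of weight $k-1$, so the two eigenvalues of $\rho_{f,\ell}(\Frob_p)$, namely $p^{(k-1)/2}\alpha_p$ and $p^{(k-1)/2}\overline{\alpha_p}$, have complex absolute value $p^{(k-1)/2}$ under every embedding $\overline{\Q}_\ell\hookrightarrow\C$; together with $\det\rho_{f,\ell}(\Frob_p)=p^{k-1}$ this forces $|\alpha_p|=1$ and hence $|a_f(p)|\le 2p^{(k-1)/2}$. For $k=2$ this is just the Hasse--Weil bound for the abelian variety reduced modulo $p$. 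Finally, for the finitely many primes $p\mid N$, a local analysis at $p$ (Atkin--Lehner theory, or the classification of the local automorphic component) gives $|a_f(p)|\le p^{(k-1)/2}$ --- in fact $a_f(p)=0$ if $p^2\mid N$ and $|a_f(p)|=p^{(k-2)/2}$ if $p\,\|\,N$, with $a_f(p^r)=a_f(p)^r$ --- so $|a_f(p^r)|\le(r+1)p^{r(k-1)/2}$ there as well. Assembling these local bounds over the primes dividing $n$ finishes the proof.

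The genuinely hard step, and the only one that is not a formal manipulation of Hecke relations or a routine local computation, is the purity statement: it requires both the construction of $\rho_{f,\ell}$ in the cohomology of Kuga--Sato varieties and Deligne's resolution of the Weil conjectures. In the body of this paper, of course, Theorem~\ref{DeligneTheorem} is invoked as a black box, and only the final inequality $|a_f(n)|\le d(n)n^{(k-1)/2}$ is used.
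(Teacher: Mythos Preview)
The paper does not give a proof of this statement at all: Theorem~\ref{DeligneTheorem} is simply stated with a citation to \cite{deligne} and used as a black box, exactly as you yourself note in your final paragraph. So there is no ``paper's own proof'' to compare against.

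That said, your sketch is a correct outline of the standard argument from the literature: reduce via Hecke multiplicativity to the prime-power case, handle $p\nmid N$ by attaching the $\ell$-adic Galois representation $\rho_{f,\ell}$ realized in the cohomology of Kuga--Sato varieties, identify $a_f(p)$ with the trace of Frobenius via the Eichler--Shimura relation, and then invoke purity from Deligne's proof of the Weil conjectures to force the Frobenius eigenvalues to have absolute value $p^{(k-1)/2}$; the primes $p\mid N$ are handled by the local theory of newforms. Nothing here is wrong, and you correctly flag that the purity step is the deep input. For the purposes of this paper, however, your last sentence already captures everything that is actually needed.
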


Furthermore, to compute inner products of newforms and given cusp forms, we make use of the following property of newforms.

\begin{theorem}[Th. 13.3.9 of \cite{CohenStromberg}]\label{Mult1}
Let $f\in\mathcal{S}_k^{\text{new}}(\Gamma_0(N))$ be a newform and $g\in\mathcal{S}_k(\Gamma_0(M)).$ If $g$ is an eigenfunction of the Hecke operators $T(p)$ for all $p\nmid N$ with the same eigenvalues as $f$ for all but finitely many $p,$ then $g=\lambda f$ for some $\lambda\in\C.$
\end{theorem}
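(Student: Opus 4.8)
The plan is to deduce this from the Atkin--Lehner--Li structure theory of newforms together with strong multiplicity one. First I would fix the canonical basis of $\mathcal{S}_k(\Gamma_0(M))$: every cusp form is uniquely a linear combination of the linearly independent forms $h|V(d)$, where $h$ ranges over the normalized newforms of the various levels $L\mid M$ and $d$ ranges over the divisors of $M/L$. Writing $g=\sum_{h}\sum_{d\mid M/L_h}c_{h,d}\,(h|V(d))$, with $L_h$ the level of $h$, the goal becomes to show that the only newform occurring is $f$ and that only the $d=1$ term survives.

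Next I would use Hecke-equivariance at the primes $p\nmid M$. For such $p$ we have $p\nmid d$ for every $d\mid M/L_h$, so $T(p)$ commutes with $V(d)$, and since $T(p)h=a_h(p)h$ we get $T(p)(h|V(d))=a_h(p)\,(h|V(d))$. Thus, if $g$ is a $T(p)$-eigenform with eigenvalue $\lambda_p$, then comparing coefficients against the linearly independent family $\{h|V(d)\}$ forces $a_h(p)=\lambda_p$ whenever $c_{h,d}\neq 0$, for every $p\nmid M$. Combined with the hypothesis $\lambda_p=a_f(p)$ for all but finitely many $p$, this shows that every newform $h$ occurring in $g$ satisfies $a_h(p)=a_f(p)$ for all but finitely many $p$. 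By strong multiplicity one for newforms (a normalized newform is determined, level and all, by its Hecke eigenvalues at cofinitely many primes), the only $h$ occurring is $f$; hence $N\mid M$ and $g=\sum_{d\mid M/N}c_d\,(f|V(d))$ lies in the oldclass of $f$.

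The remaining step, showing $c_d=0$ for all $d>1$, is the one I expect to be the main obstacle. Here one must leave the good primes $p\nmid M$ and use the hypothesis at a prime $p\mid M/N$ (necessarily coprime to $N$), where the relevant Hecke operator is $U(p)$. On the oldclass $\bigoplus_{d\mid M/N}\mathbb{C}\,(f|V(d))$ the operator $U(p)$ is explicitly computable: on each block $\langle f|V(d_0),\,f|V(pd_0)\rangle$ with $p\nmid d_0$ it is given by $\left(\begin{smallmatrix}a_f(p)&1\\-p^{k-1}&0\end{smallmatrix}\right)$, whose eigenvalues are the two roots of $X^2-a_f(p)X+p^{k-1}$, and since $p^{k-1}\neq 0$ neither root equals $a_f(p)$. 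Matching the $U(p)$-eigenvalue of $g$ to that of $f$ therefore forces the projection of $g$ onto each such block to vanish, leaving $g=c_1 f$, i.e.\ $g=\lambda f$. The care needed in this final step, tracking exactly which Hecke operators act, in which normalization, and pinning down the eigenvalue at the finitely many primes dividing $M/N$, is where essentially all the work lies; the first two steps are formal once the newform machinery and strong multiplicity one are in hand.
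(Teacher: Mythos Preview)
The paper does not prove this theorem; it is quoted as Theorem~13.3.9 of Cohen--Str\"omberg with no argument given, so there is no in-paper proof to compare against. Your overall strategy---the Atkin--Lehner--Li basis together with strong multiplicity one---is the standard route, and your first two steps are correct.

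There is, however, a genuine issue in your third step. Your $2\times 2$ block computation only treats the case $p\mathrel{\|} M/N$. When $p^e\mathrel{\|} M/N$ with $e\ge 2$, the $U(p)$-stable block attached to a fixed $d_0$ with $p\nmid d_0$ is the $(e{+}1)$-dimensional span of $f|V(p^j d_0)$ for $0\le j\le e$, and on this block the characteristic polynomial of $U(p)$ is $X^{e-1}\bigl(X^2-a_f(p)X+p^{k-1}\bigr)$. If $a_f(p)=0$ then $a_f(p)$ \emph{is} a $U(p)$-eigenvalue on the block: for instance $g=f+p^{k-1}\,f|V(p^2)$ is annihilated by $U(p)$ but is not a multiple of $f$. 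Taking $f$ to be a CM newform with $a_f(p)=0$ and $M=Np^2$, this $g$ satisfies every hypothesis of the stated theorem yet is not of the form $\lambda f$. So either the statement has been transcribed slightly too strongly, or an extra hypothesis (e.g.\ $M=N$, or $g$ new) is implicit in the cited reference; your sketch cannot close the gap in this generality because the gap is in the statement itself.

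For the applications actually made in the paper the theorem is only invoked with $M=N$ (in Corollary~\ref{Mult1Corollary} and in the proof of Lemma~\ref{ScalarTheta4NewformLevel4}), and there your argument is already complete at the end of step two: once $g$ lies in the oldclass of $f$ inside $\mathcal{S}_k(\Gamma_0(N))$, the only divisor of $M/N=1$ is $d=1$, so step three is vacuous.
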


Theorem~\ref{Mult1} is generally referred to as the multiplicity-one theorem.  It implies the following corollary which we implicitly use throughout Sections~\ref{SectionPetersson} and \ref{SectionCoefficients}.

\begin{corollary}\label{Mult1Corollary}
If $f\in\mathcal{S}_k^{\text{new}}(\Gamma_0(N))$ is a normalized newform, then the following are true.

\noindent
(1) There exists $\lambda_N(f)\in\{\pm1\}$ such that
$
f|W_N=\lambda_N(f)f.
$

\noindent
(2) If $g\in\mathcal{S}_k^{\text{new}}(\Gamma_0(N))$ is a normalized newform distinct from $f,$ then $\langle f,g\rangle=0.$
\end{corollary}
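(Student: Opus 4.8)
The plan is to derive both parts from the multiplicity-one theorem (Theorem~\ref{Mult1}), combined with two standard structural facts about the Hecke operators $T(p)$ at primes $p\nmid N$ acting on $\mathcal{S}_k(\Gamma_0(N))$: that $W_N$ commutes with $T(p)$, and that $T(p)$ is self-adjoint with respect to the Petersson inner product (both can be cited from, e.g., \cite{CohenStromberg}).

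For part (1), I would first note that $f|W_N\in\mathcal{S}_k(\Gamma_0(N))$ by the remarks preceding the corollary, and that $f|W_N\neq 0$ since $W_N$ is invertible. Because $W_N$ commutes with $T(p)$ for every prime $p\nmid N$, the form $f|W_N$ is a simultaneous eigenform of all these $T(p)$ with the same eigenvalues $a_f(p)$ as $f$; Theorem~\ref{Mult1}, applied with $M=N$, then forces $f|W_N=\lambda f$ for some nonzero $\lambda\in\C$. To identify $\lambda$, I would apply $W_N$ a second time, using that for the normalization $(f|_k\gamma)(\tau)=(ad-bc)^{k/2}(c\tau+d)^{-k}f(\gamma\tau)$ of the weight-$k$ slash operator, scalar matrices act trivially; since $W_N^2=\begin{pmatrix}0&-1\\N&0\end{pmatrix}^2=-N\cdot I$, this gives $f|W_N^2=f$, hence $\lambda^2=1$ and $\lambda=:\lambda_N(f)\in\{\pm1\}$. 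The only point requiring care is verifying that the chosen normalization does kill scalar matrices, so that $W_N$ is genuinely an involution on $\mathcal{S}_k(\Gamma_0(N))$.

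For part (2), suppose $f\ne g$ are normalized newforms in $\mathcal{S}_k^{\text{new}}(\Gamma_0(N))$. I would first argue by contradiction that there is a prime $p_0\nmid N$ with $a_f(p_0)\ne a_g(p_0)$: otherwise Theorem~\ref{Mult1} gives $g=\lambda f$, and comparing coefficients of $q_\tau$ (both forms being normalized) yields $\lambda=1$, i.e. $g=f$, a contradiction. Then, using that $T(p_0)$ is self-adjoint, so that its eigenvalues $a_f(p_0),a_g(p_0)$ are real, I would write
$$
a_f(p_0)\,\langle f,g\rangle=\langle T(p_0)f,g\rangle=\langle f,T(p_0)g\rangle=\overline{a_g(p_0)}\,\langle f,g\rangle=a_g(p_0)\,\langle f,g\rangle,
$$
and conclude $\langle f,g\rangle=0$ since $a_f(p_0)-a_g(p_0)\ne 0$. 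Overall there is no substantial obstacle here; the argument is just a packaging of standard facts, and the only things to be careful about are the commutation $W_NT(p)=T(p)W_N$ and the self-adjointness of $T(p)$ for $p\nmid N$, together with the slash-operator normalization used above.
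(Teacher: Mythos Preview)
Your proof is correct and, for part (2), essentially identical to the paper's: both use Theorem~\ref{Mult1} to produce a prime $p\nmid N$ with differing Hecke eigenvalues and then invoke self-adjointness of $T(p)$. For part (1) the paper simply cites Proposition~13.3.11(b) of \cite{CohenStromberg}, whereas you supply the standard underlying argument (commutation of $W_N$ with $T(p)$ plus $W_N^2$ acting as a scalar); the only minor caveat is that your claim ``scalar matrices act trivially'' uses that $k$ is even, which holds here since $-I\in\Gamma_0(N)$ forces trivial-nebentypus cusp forms to have even weight.
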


\begin{proof}

To show (1), apply part (b) of Proposition 13.3.11 of \cite{CohenStromberg} with $Q=N.$

To show (2), note that since $g$ is not a multiple of $f,$ Theorem~\ref{Mult1} implies that there exists some prime $p\nmid N$ such that $T(p)f\neq T(p)g.$ Since the Hecke operators are self-adjoint (see Theorem 10.3.5 of \cite{CohenStromberg}) with respect to the Petersson inner product on $\mathcal{S}_k(\Gamma_0(N)),$ this shows that $\langle f,g\rangle=0.$
\end{proof}

\subsection{Rankin-Selberg Theory}

To decompose $\pi_{\hol}([\mathcal{H}(\tau),\theta(\tau)]_m)$ and $\pi_{\hol}([\mathcal{H}(\tau),\theta(4\tau)]_m)$ as explained above, we take the inner products of these cusp forms with newforms. To compute these inner products, we make use of the Hecke relations between Fourier coefficients of a given newform. To this end, we use unfolding theorems due to Rankin and Selberg (see section 11.12 of \cite{CohenStromberg}). The following theorem relates the Petersson inner product of two cusp forms to their respective Fourier coefficients.

\begin{theorem}\label{Rankin1}[see \cite{RankinContributions}]
If $f(\tau)=\sum\limits_{n\geq 1}a_f(n)q_{\tau}^n$ and $g(\tau)=\sum\limits_{n\geq 1}a_g(n)q_{\tau}^n$ are cusp forms of weight $k$ on a congruence subgroup $\Gamma,$ then we have
$$
\langle f,g\rangle=\frac{\pi}{3}\frac{(k-1)!}{(4\pi)^k}\lim\limits_{x\to\infty}\frac{1}{x}\sum\limits_{n\leq x}\frac{a_f(n)\overline{a_g(n)}}{n^{k-1}}.
$$
\end{theorem}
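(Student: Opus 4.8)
The plan is to run the classical Rankin--Selberg unfolding argument, which is the method of Rankin~\cite{RankinContributions} and Selberg; an account in this generality appears in~\cite{CohenStromberg}. Since the Fourier expansions are in $q_\tau=e^{2\pi i\tau}$, the cusp $\infty$ of $\Gamma$ has width $1$, and I will assume $-I\in\Gamma$ (which holds in every application in this paper); this pins down the constant below. For $\Re(s)$ large, introduce the real-analytic Eisenstein series attached to the cusp $\infty$,
$$
E_\Gamma(\tau,s):=\sum_{\gamma\in\Gamma_\infty\backslash\Gamma}\Im(\gamma\tau)^s,
$$
where $\Gamma_\infty$ is the stabilizer of $\infty$ in $\Gamma$, and form the Rankin--Selberg integral
$$
I(s):=\iint_{\HH/\Gamma} f(\tau)\overline{g(\tau)}\,y^k\,E_\Gamma(\tau,s)\,\frac{dx\,dy}{y^2}.
$$
Because $f$ and $g$ are cusp forms they decay rapidly at every cusp, so $I(s)$ converges absolutely for $\Re(s)>1$; and since $f(\tau)\overline{g(\tau)}y^k$ is $\Gamma$-invariant, unfolding the sum over $\Gamma_\infty\backslash\Gamma$ against the quotient $\HH/\Gamma$ collapses it to a fundamental domain for $\Gamma_\infty$, namely the strip $0\le x<1$, $y>0$, giving
$$
I(s)=\int_0^\infty\!\int_0^1 f(\tau)\overline{g(\tau)}\,y^{k+s-2}\,dx\,dy.
$$

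Inserting the Fourier expansions, integrating in $x$ by Parseval and then in $y$ by the Gamma integral $\int_0^\infty e^{-4\pi ny}y^{k+s-2}\,dy=\Gamma(k+s-1)(4\pi n)^{-(k+s-1)}$, one finds
$$
I(s)=\frac{\Gamma(k+s-1)}{(4\pi)^{k+s-1}}\,D(s),\qquad D(s):=\sum_{n\ge 1}\frac{a_f(n)\overline{a_g(n)}}{n^{s+k-1}}.
$$
Now I invoke the classical analytic theory of $E_\Gamma(\tau,s)$: it continues meromorphically to a neighbourhood of $\{\Re(s)\ge 1\}$, is holomorphic there apart from a simple pole at $s=1$, and has residue there equal to the $\tau$-independent constant $\operatorname{res}_{s=1}E_\Gamma(\tau,s)=\mathrm{vol}(\HH/\Gamma)^{-1}=\tfrac{3}{\pi}\,[\SL_2(\Z):\Gamma]^{-1}$. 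Since $f(\tau)\overline{g(\tau)}y^k$ decays faster than any power of $y$ at the cusps, subtracting the polar part of $E_\Gamma$ shows that $I(s)$ — hence $D(s)$, via the holomorphic Gamma factor — continues to $\{\Re(s)\ge 1\}$ with at most a simple pole at $s=1$, and the residue of $I(s)$ there may be computed in two ways: on one hand
$$
\operatorname{res}_{s=1}I(s)=\frac{3}{\pi[\SL_2(\Z):\Gamma]}\iint_{\HH/\Gamma}f\overline{g}\,y^k\,\frac{dx\,dy}{y^2}=\frac{3}{\pi}\,\langle f,g\rangle
$$
by the normalization of the Petersson product used in this paper, and on the other hand $\operatorname{res}_{s=1}I(s)=\tfrac{(k-1)!}{(4\pi)^k}\operatorname{res}_{s=1}D(s)$, since the Gamma factor is holomorphic at $s=1$ with value $\Gamma(k)/(4\pi)^k$.

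It remains to identify $\operatorname{res}_{s=1}D(s)$ with $\lim_{x\to\infty}\tfrac1x\sum_{n\le x}a_f(n)\overline{a_g(n)}\,n^{1-k}$, which is a Tauberian step. Since $a_f(n)\overline{a_g(n)}$ need not be real, I would first polarize, using
$$
4\,a_f(n)\overline{a_g(n)}=|a_{f+g}(n)|^2-|a_{f-g}(n)|^2+i\,|a_{f+ig}(n)|^2-i\,|a_{f-ig}(n)|^2,
$$
to write $D(s)$ as a $\C$-linear combination of the four series $D_h(s)=\sum_{n\ge1}|a_h(n)|^2 n^{-(s+k-1)}$ for the cusp forms $h\in\{f+g,\,f-g,\,f+ig,\,f-ig\}$ of weight $k$ on $\Gamma$, each of the same shape and so enjoying the same continuation as above. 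To each $D_h$ I apply the Wiener--Ikehara Tauberian theorem — whose hypothesis, holomorphy of $D_h(s)$ up to the line $\Re(s)=1$ apart from a possible simple pole at $s=1$, is exactly what the continuation of $E_\Gamma$ provides — to conclude $\sum_{n\le x}|a_h(n)|^2 n^{1-k}=c_h x+o(x)$ with $c_h=\operatorname{res}_{s=1}D_h(s)$ (and $c_h=0$, hence $o(x)$, when there is no pole, as happens for distinct newforms). Recombining the four estimates gives $\sum_{n\le x}a_f(n)\overline{a_g(n)}\,n^{1-k}=\bigl(\operatorname{res}_{s=1}D(s)\bigr)x+o(x)$, i.e. the desired limit equals $\operatorname{res}_{s=1}D(s)$. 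Combining this with the two residue computations yields $\tfrac{3}{\pi}\langle f,g\rangle=\tfrac{(k-1)!}{(4\pi)^k}\lim_{x\to\infty}\tfrac1x\sum_{n\le x}\tfrac{a_f(n)\overline{a_g(n)}}{n^{k-1}}$, which rearranges to the claimed identity.

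The formal unfolding in the first paragraph is routine; the main obstacles are the two analytic inputs. The first is the meromorphic continuation of $E_\Gamma(\tau,s)$ slightly past $\Re(s)=1$ together with the \emph{exact} value $\tfrac{3}{\pi}[\SL_2(\Z):\Gamma]^{-1}$ of its residue relative to the chosen normalization of $\langle\cdot,\cdot\rangle$ — here one must be careful about the width of the cusp at $\infty$ and about whether $-I\in\Gamma$. The second is the Wiener--Ikehara argument, where the wrinkle is that $a_f(n)\overline{a_g(n)}$ is complex rather than nonnegative, so the polarization above is needed to reduce to the classical positive-coefficient theorem. Both inputs are entirely classical, so in the write-up I would cite \cite{RankinContributions} and~\cite{CohenStromberg} rather than redevelop the spectral theory of Eisenstein series.
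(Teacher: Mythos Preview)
The paper does not actually prove this theorem: it is stated with the attribution ``[see \cite{RankinContributions}]'' and no proof is given, as it is quoted as a classical result. Your proposal is the standard Rankin--Selberg unfolding argument (precisely the method of the cited reference), and the steps --- unfolding against $E_\Gamma(\tau,s)$, computing the residue at $s=1$ via $\mathrm{vol}(\HH/\Gamma)^{-1}$ with the paper's normalization of $\langle\cdot,\cdot\rangle$, and the Wiener--Ikehara Tauberian step with polarization to handle complex coefficients --- are all correct and carefully tracked; this is exactly the proof one would find in \cite{RankinContributions} or \cite{CohenStromberg}.
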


In our setting, we study inner product of cusp forms with the brackets $[\mathcal{H}(\tau),\theta(\tau)]_m$ and $[\mathcal{H}(\tau),\theta(4\tau)]_m.$ Since $\mathcal{H}(\tau)$ is defined in terms of Eisenstein series, the following proposition gives the unfolding result we require.

\begin{proposition}\label{UnfoldingInGeneral}
If $m$ is a positive integer, $f(\tau)=\sum\limits_{n\geq 1}a_f(n)q_{\tau}^n\in\mathcal{S}_{2m+2}(\Gamma_0(4)),$ and $g(\tau)=\sum\limits_{n\geq 1}a_g(n)q_{\tau}^n\in\mathcal{S}_{1/2}(\Gamma_0(4)),$ then we have
$$
\langle f , [E_{\frac{3}{2}},g]_m\rangle=\frac{1}{6}\binom{m-\frac{1}{2}} {m}\frac{(2m)!}{(4\pi)^{2m+1}}\sum\limits_{n\geq 1} \frac{a_f(n)\overline{a_g(n)}}{n^{m+1}}.
$$
\end{proposition}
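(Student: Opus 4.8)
The plan is to run a Rankin--Selberg unfolding against the Eisenstein series $E_{\frac32,s}$ and then continue analytically to $s=0$. Fix $\Re(s)$ large and write $E_{\frac32,s}(\tau)=\sum_{\gamma\in\Gamma_\infty\backslash\Gamma_0(4)}(y^s)\big|_{\frac32}\tilde\gamma$, so that the coset of the identity contributes the seed $y^s$. Since $g$ is modular of weight $\frac12$ on $\Gamma_0(4)$ we have $g|_{\frac12}\tilde\gamma=g$, and therefore, by linearity of the Rankin--Cohen bracket in its first slot together with Proposition~\ref{BracketProposition}(2),
\[
[E_{\frac32,s},g]_m=\sum_{\gamma\in\Gamma_\infty\backslash\Gamma_0(4)}\bigl([y^s,g]_m\bigr)\big|_{2m+2}\tilde\gamma ,
\]
the sum converging absolutely for $\Re(s)$ large. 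Inserting this into $\langle f,[E_{\frac32,s},g]_m\rangle=\frac{1}{[\SL_2(\Z):\Gamma_0(4)]}\iint_{\HH/\Gamma_0(4)}f\,\overline{[E_{\frac32,s},g]_m}\,y^{2m+2}\,\frac{dx\,dy}{y^2}$ and unfolding in the usual way collapses the coset sum against the fundamental domain, leaving an integral over the strip $\{0\le x<1,\ y>0\}$ against the seed; the prefactor $\frac{1}{[\SL_2(\Z):\Gamma_0(4)]}=\frac16$ is the source of the $\frac16$ in the statement. The original Petersson pairing converges since $f$ is cuspidal and $E_{\frac32}$ has manageable growth.

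Next I would expand $[y^s,g]_m$ from the definition of the bracket, using $\frac{d^r}{d\tau^r}y^s=\left(-\frac i2\right)^r s(s-1)\cdots(s-r+1)\,y^{s-r}$ for the derivatives landing on the seed and $\frac{d^t}{d\tau^t}g=\sum_{n\ge1}a_g(n)(2\pi i n)^t q_\tau^n$ for those landing on $g$. Then the integral over $0\le x<1$ is an orthogonality relation for additive characters and extracts $a_f(n)e^{-4\pi ny}$ from $f$, while the $y$-integral becomes the Gamma integral $\int_0^\infty y^{\,s-r+2m}e^{-4\pi ny}\,dy=\Gamma(s-r+2m+1)/(4\pi n)^{\,s-r+2m+1}$. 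Collecting the powers of $2\pi i$ and $4\pi$ and the binomial coefficients attached to each pair $(r,t)$ with $r+t=m$, one obtains, for $\Re(s)$ large, an identity of the shape
\[
\langle f,[E_{\frac32,s},g]_m\rangle=\frac{1}{6\,(4\pi)^{s+2m+1}}\left(\sum_{r=0}^{m}c_r(s)\right)\sum_{n\ge1}\frac{a_f(n)\,\overline{a_g(n)}}{n^{\,s+m+1}},
\]
where each $c_r(s)$ is an explicit product of two binomial coefficients, the falling factorial $s(s-1)\cdots(s-r+1)$, and $\Gamma(s-r+2m+1)$.

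The final step is to continue in $s$ and set $s=0$. The left-hand side extends holomorphically across $s=0$ because $E_{\frac32,s}$ does, by Zagier's Theorem~\ref{ZagierHMSTheorem}, hence so do $[E_{\frac32,s},g]_m$ and its pairing with the cusp form $f$; the Dirichlet series on the right is a shift of the Rankin--Selberg convolution of $f$ and $g$, which has meromorphic continuation and, as $f$ and $g$ have different weights, no pole at the relevant point. Granting this, the decisive observation is that the falling factorial forces $c_r(0)=0$ for every $r\ge1$ --- equivalently $\frac{d^r}{d\tau^r}(y^s)\big|_{s=0}=0$ for $r\ge1$, so only the undifferentiated copy of $E_{\frac32}$ survives --- while $c_0(0)$ is a single binomial coefficient times $\Gamma(2m+1)=(2m)!$, which together with $(4\pi)^{-(2m+1)}$ gives exactly the constant in the statement. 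I expect the main obstacle to be precisely this analytic continuation: one must (i) justify the termwise unfolding and the interchange of the $n$-sum with the $y$-integral, which is legitimate only where everything converges absolutely, i.e. $\Re(s)\gg 0$, and (ii) propagate the identity down to $s=0$, which relies on the continuation of the Rankin--Selberg Dirichlet series past its abscissa of absolute convergence and on the absence of a pole there. By comparison, tracking the $2\pi i$, $4\pi$, and Gamma factors so that they telescope to the stated constant is routine.
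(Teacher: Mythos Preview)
Your unfolding argument is essentially identical to the paper's own sketch: expand $E_{\frac32,s}$ as a coset sum, use the modularity of $g$ together with Proposition~\ref{BracketProposition}(2) to pull the slash outside the bracket, unfold to an integral over the strip, evaluate term-by-term via the Gamma integral, and continue to $s=0$ where the falling factorial $s(s-1)\cdots(s-r+1)$ kills every $r\ge1$ term. One small caveat on your final remark: the surviving binomial coefficient from the $r=0$ term is $\binom{m+\frac12}{m}$, not the $\binom{m-\frac12}{m}$ printed in the statement---this is a typo in the paper, as its own applications (e.g.\ Lemma~\ref{ScalarThetaNewformLevel4}) use $\binom{m+\frac12}{m}$---so your claim that the computation ``gives exactly the constant in the statement'' should be read modulo that correction.
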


\begin{proof}[Sketch of Proof]

Ignoring convergence issues, we have that
\begin{align*}
[E_{\frac{3}{2},s}(\tau),g(\tau)]_m&=\sum\limits_{\gamma\in\Gamma_\infty\backslash\Gamma_0(4)}[j(\gamma,\tau)^{-3}\cdot\Im(\gamma\tau)^s,g(\tau)]_m\\
&=\sum\limits_{\gamma\in\Gamma_\infty\backslash\Gamma_0(4)}[\Im(\tau)^s|_{\frac{3}{2}}\gamma,\left((g|_{1/2}\gamma^{-1})|_{1/2}\gamma\right)(\tau)]_m.
\end{align*}
Since $g$ is modular of weight $\frac{1}{2}$ on $\Gamma_0(4),$ Proposition~\ref{BracketProposition} gives us that
$$
[E_{\frac{3}{2},s}(\tau),g(\tau)]_m=\sum\limits_{\gamma\in\Gamma_\infty\backslash\Gamma_0(4)}[\Im(\tau)^s,g(\tau)]_m|_{2m+2}\gamma.
$$
By the definition of the Rankin-Cohen bracket, a simple computation then gives that
$$
[E_{\frac{3}{2},s}(\tau),g(\tau)]_m=\sum\limits_{\gamma\in\Gamma_\infty\backslash\Gamma_0(4)}\sum\limits_{l=0}^{m}{\binom{m+\frac{1}{2}}{m-l}}{\binom{m-\frac{1}{2}}{l}}\frac{s!}{(s-l)!}\frac{\Im(\gamma\tau)^{s-l}}{(c\tau+d)^{2m+2}}\frac{1}{(2\pi i)^{m-l}}\frac{\partial^{m-l}g}{\partial\tau^{m-l}}(\gamma\tau).
$$
By the definition of the Petersson inner product and modularity of $f,$ we  have that
$$
\langle [E_{\frac{3}{2},s},g]_m,f\rangle=\sum\limits_{l=0}^m\frac{1}{(2\pi i)^{m-l}} {\binom{m+\frac{1}{2}}{m-l}}{\binom{m-\frac{1}{2}}{l}}\frac{s!}{(s-l)!}\iint_{\HH/\Gamma_\infty}\overline{f(\tau)}\cdot\frac{\partial^{m-l}g}{\partial\tau^{m-l}}(\tau)\cdot y^{2m+s-l}dxdy,
$$
where $\tau=x+iy.$ The rest of the proof follows by writing $f(\tau)$ and $g(\tau)$ as Fourier series, computing the resulting elementary integral, and taking analytic continuation to $s=0$ on both sides. 
\end{proof}

Additionally, we will need an analogue of Proposition~\ref{UnfoldingInGeneral} when $f$ and $g$ are of level dividing $16.$ Since $E_{\frac{3}{2}}(\tau)$ is the Eisenstein series of weight $\frac{3}{2}$ for $\Gamma_0(4),$ the above proposition doesn't hold. However, we have the following similar result.

\begin{proposition}\label{UnfoldingInGeneral2}
If $m$ is a positive integer, $f\in\mathcal{S}_{2m+2}(\Gamma_0(16)),$ and $g\in\mathcal{M}_{1/2}(\Gamma_0(16)),$ then we have
$$
\langle f,[E_{\frac{3}{2}},g]_m\rangle=\frac{1}{24}{\binom{m-\frac{1}{2}} {m}}\frac{(2m)!}{(4\pi)^{2m+1}}\sum\limits_{A\in S}\sum\limits_{n\geq 1}\frac{a_{f,A}(n)\overline{a_{g,A}(n)}}{n^{m+1}},
$$
where $S:=\left\{\begin{pmatrix}
1 & 0 \\
0 & 1
\end{pmatrix},\begin{pmatrix} 
1 & 0 \\
4 & 1
\end{pmatrix}, \begin{pmatrix}
3 & -1 \\
4 & -1 \\
\end{pmatrix}, \begin{pmatrix}
1 & 0 \\ 
8 & 1
\end{pmatrix}
\right\}, (f|A)(\tau)=\sum\limits_{n\geq 1}a_{f,A}(n)q_{\tau}^n,$ and $(g|A)(\tau)=\sum\limits_{n\geq 1}a_{g,A}(n)q_{\tau}^n.$
\end{proposition}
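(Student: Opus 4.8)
The strategy is to mimic the proof of Proposition~\ref{UnfoldingInGeneral}, but to account for the fact that $E_{\frac{3}{2}}(\tau)$, being attached to $\Gamma_0(4)$ rather than $\Gamma_0(16)$, is no longer a single orbit sum over $\Gamma_\infty\backslash\Gamma_0(16)$. The first step is to write $E_{\frac{3}{2},s}(\tau)=\sum_{\gamma\in\Gamma_\infty\backslash\Gamma_0(4)}j(\gamma,\tau)^{-3}\Im(\gamma\tau)^s$ and then split the coset space $\Gamma_\infty\backslash\Gamma_0(4)$ into $\Gamma_0(16)$-cosets: choosing coset representatives $A$ for $\Gamma_0(16)\backslash\Gamma_0(4)$, we get $\Gamma_\infty\backslash\Gamma_0(4)=\bigsqcup_{A}\bigl(\Gamma_\infty\backslash\Gamma_0(16)\bigr)A$. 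The index $[\Gamma_0(4):\Gamma_0(16)]=6$, and the claim is that the set $S$ of four matrices, together with the two extra cosets that happen to be $\Gamma_\infty$-equivalent (at the level of the orbit sum) to elements already listed, gives a complete and correctly-weighted set of representatives; the explicit $S$ must be checked to be a genuine system of coset representatives for the relevant double-coset structure, which accounts for the factor $\frac{1}{24}=\frac{1}{6}\cdot\frac{1}{4}$ versus $\frac{1}{6}$ in Proposition~\ref{UnfoldingInGeneral}.

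Once the splitting is in place, I would, exactly as in the sketch for Proposition~\ref{UnfoldingInGeneral}, pull the inner sum over $\Gamma_\infty\backslash\Gamma_0(16)$ out through the Rankin--Cohen bracket using Proposition~\ref{BracketProposition} (writing $g = (g|A^{-1})|A$ and using modularity of $g$ on $\Gamma_0(16)$ after translating by $A$), then unfold the Petersson integral $\langle [E_{\frac32,s},g]_m,f\rangle$ over $\HH/\Gamma_0(16)$ against the $\Gamma_\infty$-orbit to obtain an integral over $\HH/\Gamma_\infty$, i.e. the strip. This produces, for each $A\in S$, a term
$$
\sum_{l=0}^m \frac{1}{(2\pi i)^{m-l}}\binom{m+\frac12}{m-l}\binom{m-\frac12}{l}\frac{s!}{(s-l)!}\iint_{\HH/\Gamma_\infty}\overline{(f|A)(\tau)}\,\frac{\partial^{m-l}(g|A)}{\partial\tau^{m-l}}(\tau)\,y^{2m+s-l}\,dx\,dy,
$$
where the appearance of $f|A$ comes from re-expressing $f$ relative to the translated fundamental domain. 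Substituting the Fourier expansions $(f|A)(\tau)=\sum a_{f,A}(n)q_\tau^n$ and $(g|A)(\tau)=\sum a_{g,A}(n)q_\tau^n$, the $x$-integral kills all off-diagonal terms, and the $y$-integral is the standard Gamma integral $\int_0^\infty e^{-4\pi ny}y^{2m+s-l}\,dy = \Gamma(2m+s-l+1)/(4\pi n)^{2m+s-l+1}$. Collecting the sum over $l$ into a single hypergeometric-type identity (as in the $\Gamma_0(4)$ case, the finite sum over $l$ telescopes to the clean constant $\frac{1}{24}\binom{m-1/2}{m}\frac{(2m)!}{(4\pi)^{2m+1}}$ after setting $s=0$) gives the stated formula.

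The main obstacle I expect is the bookkeeping of the coset decomposition: verifying that $S$ really is a valid set of representatives for the orbit-sum splitting — in particular that the two "missing" cosets among the six contribute nothing new because their $j$-automorphy factors and cusp widths collapse correctly, or are absorbed into the listed four with the right multiplicity — and tracking how the metaplectic automorphy factor $j(\gamma,\tau)$ and the square-root cocycle behave under right multiplication by each $A$. This is where the constant $\frac{1}{24}$ (rather than something else) gets pinned down, and it requires care with the half-integral-weight multiplier system on $\Gamma_0(16)$. Everything downstream — the Rankin--Cohen bracket manipulation, the unfolding, and the elementary Fourier/Gamma integral — is essentially identical to the proof of Proposition~\ref{UnfoldingInGeneral} and presents no new difficulty. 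I would also need to confirm that each $g|A$ and $f|A$ genuinely has a Fourier expansion supported on positive integers (no fractional exponents and no constant term issue for $f|A$ since $f$ is cuspidal), which follows from the fact that the cusps of $\Gamma_0(16)$ represented by $S$ all have integral width in the relevant normalization.
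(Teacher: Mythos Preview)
Your overall approach is the same as the paper's: the paper introduces the level-$16$ Eisenstein series
\[
G_{\frac{3}{2},s}(\tau):=\sum_{\gamma\in\Gamma_\infty\backslash\Gamma_0(16)}j(\gamma,\tau)^{-3}\,\Im(\gamma\tau)^s,
\]
observes that $E_{\frac{3}{2},s}=\sum_{A\in S}G_{\frac{3}{2},s}\big|_{3/2}A^{-1}$, and then says the rest of the argument is identical to Proposition~\ref{UnfoldingInGeneral}. That is precisely the splitting-and-unfolding you describe.

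However, your coset bookkeeping contains an error that creates a phantom difficulty. The index is
\[
[\Gamma_0(4):\Gamma_0(16)]=\frac{[\SL_2(\Z):\Gamma_0(16)]}{[\SL_2(\Z):\Gamma_0(4)]}=\frac{24}{6}=4,
\]
not $6$. Hence the four matrices in $S$ already form a complete set of coset representatives for $\Gamma_0(16)$ in $\Gamma_0(4)$; there are no ``two extra cosets'' to collapse. Moreover, since $\Gamma_\infty\subset\Gamma_0(16)$, the passage from $\Gamma_0(4)=\bigsqcup_{A\in S}\Gamma_0(16)A^{-1}$ to $\Gamma_\infty\backslash\Gamma_0(4)=\bigsqcup_{A\in S}(\Gamma_\infty\backslash\Gamma_0(16))A^{-1}$ is immediate and never involves any $\Gamma_\infty$-identifications between distinct cosets. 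The constant $\tfrac{1}{24}$ is simply the Petersson normalization $1/[\SL_2(\Z):\Gamma_0(16)]$, replacing the $\tfrac{1}{6}=1/[\SL_2(\Z):\Gamma_0(4)]$ from Proposition~\ref{UnfoldingInGeneral}. With the index corrected, the ``main obstacle'' you anticipated disappears and your outline goes through exactly as written. (Your side concern about integral $q$-expansions is fine: for each $A\in S$ one checks $ATA^{-1}\in\Gamma_0(16)$, so $f|A$ and $g|A$ have width $1$ at $i\infty$.)
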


\begin{proof}
If $s\in\C$ with $\Re(s)>\frac{1}{4}$ and $\tau\in\HH,$ then define 
\begin{equation}
G_{\frac{3}{2},s}(\tau):=\sum\limits_{\gamma\in\Gamma_\infty\backslash\Gamma_0(16)}\frac{1}{j(\gamma,\tau)^3}\cdot\Im(\gamma\tau)^s,
\end{equation}
where $j(\gamma,\tau)$ is as in Subsection~\ref{HMF&HolProj}. It is then clear that
$$
E_{\frac{3}{2},s}(\tau)=\sum\limits_{A\in S} (G_{\frac{3}{2},s}|_{\frac{3}{2}}A^{-1})(\tau).
$$
The proof follows exactly as in Proposition~\ref{UnfoldingInGeneral}.
\end{proof}

\begin{remark}
If $G_{\frac{3}{2}}(\tau)$ denotes the analytic continuation of $G_{\frac{3}{2},s}(\tau)$ to $s=0,$ then Proposition~\ref{UnfoldingInGeneral} holds with $G_{\frac{3}{2}}$ and $\Gamma_0(16)$ in place of $E_{\frac{3}{2}}$ and $\Gamma_0(4).$
\end{remark}

To compute inner products of the form $\langle [F_{3/2},g]_m,f\rangle$ using Propositions~\ref{UnfoldingInGeneral} and ~\ref{UnfoldingInGeneral2}, we consider the action of the Fricke involution on Eisenstein series of half-integral weight and theta functions. The following lemma gives the computations we require.

\begin{lemma}\label{FrickeProps}
The following are true.

\noindent
(1) We have
$$
E_{\frac{3}{2}}|(W_4^2)=iE_{\frac{3}{2}};\ \ \ \ \ \ \theta|W_4=e^{-\frac{i\pi}{4}}\theta;\ \ \ \ \ \ \theta(4\tau)|W_{16}=\frac{1}{\sqrt{2}}e^{-\frac{i\pi}{4}}\theta(\tau).
$$

\noindent
(2) If $G_{\frac{3}{2}}$ denotes the analytic continuation of $G_{\frac{3}{2},s}$ at $s=0,$ then we have
$$
F_{\frac{3}{2}}=2^{3/2}\cdot G_{\frac{3}{2}}|W_{16}\ \ \ \text{ and }\ \ \ \ 
G_{\frac{3}{2}}|(W_{16}^2)=iG_{\frac{3}{2}}.
$$
\end{lemma}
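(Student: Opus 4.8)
The plan is to compute each identity by writing out the relevant Eisenstein series or theta function as a sum over cosets $\Gamma_\infty\backslash\Gamma_0(N)$ and tracking how the Fricke matrix $W_N$ permutes those cosets (together with the automorphy factor it contributes). For the theta transformations in part (1), I would start from the classical functional equation $\theta(-1/(4\tau)) = \sqrt{-2i\tau}\,\theta(\tau)$ (equivalently, the statement that $\theta$ is a modular form of weight $\tfrac12$ on $\Gamma_0(4)$ whose behavior under $W_4$ is a root of unity times itself). Unwinding the metaplectic slash operator $\theta|_{1/2}W_4$ with the principal branch of the square root pins down the constant as $e^{-i\pi/4}$; the scaling $\theta(4\tau)|W_{16}$ then follows by the substitution $\tau\mapsto 4\tau$ and chasing the automorphy factor of $W_{16}$ versus $W_4$, which produces the extra $1/\sqrt2$. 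For $E_{\frac32}|(W_4^2)$, note $W_4^2 = \left(\begin{smallmatrix} -4 & 0 \\ 0 & -4\end{smallmatrix}\right)$ acts as a scalar; the factor $i$ comes from evaluating $\phi(\tau)^{-3}$ for the metaplectic lift of $W_4^2$ with the principal branch, i.e.\ from $(i)^{3}\cdot(\text{sign correction}) = i$ after simplification. Since these identities are stated in \cite{Zagier2} up to normalization, I would either cite that normalization explicitly or re-derive the constants from the defining cocycle of $E_{\frac32,s}$; the key check is that both sides are built from the same coset sum, so only the leading constant can differ, and that constant is forced by the square-root branch.

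For part (2), the relation $F_{\frac32} = 2^{3/2}\, G_{\frac32}|W_{16}$ should be read as the level-$16$ refinement of the defining identity $F_{\frac32,s} = E_{\frac32,s}|_{\frac32}W_4$. Using the decomposition $E_{\frac32,s} = \sum_{A\in S} G_{\frac32,s}|_{\frac32}A^{-1}$ established in the proof of Proposition~\ref{UnfoldingInGeneral2}, I would apply $W_4$ on the right and rewrite each $A^{-1}W_4$ in the form (matrix in $\Gamma_0(16)$)$\cdot W_{16}\cdot$(scalar), using that $W_4 = \left(\begin{smallmatrix} 0 & -1 \\ 4 & 0\end{smallmatrix}\right)$ and $W_{16} = \left(\begin{smallmatrix} 0 & -1 \\ 16 & 0\end{smallmatrix}\right)$ differ by the scaling matrix $V(4)$. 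Summing over $A\in S$ and invoking the invariance of $G_{\frac32,s}$ under $\Gamma_0(16)$ collapses the sum, and the metaplectic bookkeeping of the automorphy factors for $V(4)$ and $W_{16}$ yields the constant $2^{3/2}$; taking the analytic continuation to $s=0$ gives the stated identity for $F_{\frac32}$ and $G_{\frac32}$. The identity $G_{\frac32}|(W_{16}^2) = i\,G_{\frac32}$ is proved exactly as $E_{\frac32}|(W_4^2) = iE_{\frac32}$: $W_{16}^2$ is a scalar matrix, so $G_{\frac32,s}|(W_{16}^2)$ equals $G_{\frac32,s}$ up to the automorphy factor $\phi(\tau)^{-3}$ of the metaplectic lift, which evaluates to $i$.

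I expect the main obstacle to be the careful bookkeeping of the metaplectic automorphy factors and the principal-branch square roots: every one of these constants ($e^{-i\pi/4}$, $1/\sqrt2$, $i$, $2^{3/2}$) is a product of such branch choices, and a sign or a factor of $i$ is easy to lose. The cleanest route is probably to fix one reference identity (say, the classical $\theta$ transformation under $W_4$ with its known constant) and derive everything else relative to it by composing matrices and multiplying cocycles, rather than recomputing each from scratch. The coset-permutation structure itself is routine once one knows that $W_N$ normalizes $\Gamma_0(N)$ in the relevant sense and that conjugation by $W_N$ sends $\Gamma_\infty\cap\Gamma_0(N)$-cosets to $\Gamma_0(N)$-cosets; the only genuinely delicate point is the consistency of the half-integral-weight multiplier system under these operations, which is precisely why the statement restricts to $\Gamma_0(4)$ and $\Gamma_0(16)$ where the theta multiplier is well understood.
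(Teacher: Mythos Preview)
Your treatment of part~(1) and of the identity $G_{\frac{3}{2}}|(W_{16}^2)=iG_{\frac{3}{2}}$ in part~(2) is correct and matches the paper, which declares these computations trivial (both reduce to evaluating the metaplectic cocycle on a scalar matrix, exactly as you describe).

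For the identity $F_{\frac{3}{2}}=2^{3/2}\,G_{\frac{3}{2}}|W_{16}$, however, your proposed route through the coset decomposition $E_{\frac{3}{2},s}=\sum_{A\in S}G_{\frac{3}{2},s}|A^{-1}$ does not collapse the way you expect. The matrices $A^{-1}W_4$ cannot be written in the form $\gamma\cdot W_{16}\cdot(\text{scalar})$ with $\gamma\in\Gamma_0(16)$: already for $A=I$ one has $W_4W_{16}^{-1}=\begin{pmatrix}1&0\\0&1/4\end{pmatrix}$, which is not a scalar multiple of anything in $\SL_2(\Z)$, and for the nontrivial $A\in S$ the conjugates $W_{16}^{-1}A^{-1}W_{16}$ land outside $\SL_2(\Z)$ altogether. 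So the four functions $G_{\frac{3}{2}}|A^{-1}W_4$ are genuinely distinct, and $\Gamma_0(16)$-invariance does not identify them. Summing over $A$ just returns you to $E_{\frac{3}{2}}|W_4$, and you still need a separate argument to relate that to $G_{\frac{3}{2}}|W_{16}$.

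The paper bypasses all of this with the single observation $G_{\frac{3}{2}}(\tau)=E_{\frac{3}{2}}(4\tau)$, which is immediate from the definitions: both Eisenstein series are sums over coprime bottom rows $(c,d)$ with $d$ odd, and the substitution $\tau\mapsto 4\tau$ in the level-$4$ sum replaces $(c,d)$ with $4\mid c$ by $(4c,d)$ with $16\mid 4c$, the theta multiplier being unchanged since $\bigl(\tfrac{4c}{d}\bigr)=\bigl(\tfrac{c}{d}\bigr)$. Once you have this, the desired identity is a two-line comparison of automorphy factors:
\[
(G_{\frac{3}{2}}|W_{16})(\tau)=(4\tau)^{-3/2}E_{\frac{3}{2}}\!\left(-\tfrac{1}{4\tau}\right),\qquad
F_{\frac{3}{2}}(\tau)=(E_{\frac{3}{2}}|W_4)(\tau)=(2\tau)^{-3/2}E_{\frac{3}{2}}\!\left(-\tfrac{1}{4\tau}\right),
\]
and the ratio is $2^{3/2}$. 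This identity $G_{\frac{3}{2}}(\tau)=E_{\frac{3}{2}}(4\tau)$ is the missing ingredient in your plan; with it, the coset bookkeeping you anticipated becomes unnecessary.
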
 

\begin{proof}
The proof of (1) is trivial.  On the other hand, note that $G_{\frac{3}{2},s}(\tau)=E_{\frac{3}{2},s}(4\tau),$ and therefore, by analytic continuation, $G_{\frac{3}{2}}(\tau)=E_{\frac{3}{2}}(4\tau).$ The proof of (2) reduces to an elementary computation.
\end{proof}

In the case where $f$ is a newform and $g$ is a univariate theta function, the following lemma expresses the series in Propositions~\ref{UnfoldingInGeneral} and ~\ref{UnfoldingInGeneral2} in terms of the Petersson norm of $f.$

\begin{lemma}\label{IntermediateUnfolding}
If $f(\tau)=\sum\limits_{n\geq 1}a_f(n)q_{\tau}^n$ is a normalized newform of weight $k$ and level $N,$  then we have
$$
\langle f,f\rangle=\frac{\pi}{3}\cdot\frac{(k-1)!}{(4\pi)^k}\cdot\prod\limits_{p\mid N}\left(1-\frac{1}{p}\right)\cdot\sum\limits_{n\geq 1}\frac{a_f(n^2)}{n^k}.
$$
\end{lemma}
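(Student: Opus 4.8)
The plan is to combine Theorem~\ref{Rankin1} with the Hecke multiplicativity of the Fourier coefficients of a normalized newform. First I would apply Theorem~\ref{Rankin1} with $g = f$, which gives
$$
\langle f,f\rangle = \frac{\pi}{3}\cdot\frac{(k-1)!}{(4\pi)^k}\cdot\lim_{x\to\infty}\frac{1}{x}\sum_{n\leq x}\frac{|a_f(n)|^2}{n^{k-1}}.
$$
So the content of the lemma is the identity
$$
\lim_{x\to\infty}\frac{1}{x}\sum_{n\leq x}\frac{|a_f(n)|^2}{n^{k-1}} = \prod_{p\mid N}\Bigl(1-\frac{1}{p}\Bigr)\cdot\sum_{n\geq 1}\frac{a_f(n^2)}{n^k},
$$
which I would prove by comparing the Dirichlet series generating each side. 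The natural device is the Rankin--Selberg convolution: form $D(s) := \sum_{n\geq 1}|a_f(n)|^2 n^{-s}$ and recall the classical identity (via the Euler product for a newform, using $|a_f(p)|^2 = a_f(p)^2$ for the trivial nebentypus here and the Hecke recursion $a_f(p^{j+1}) = a_f(p)a_f(p^j) - p^{k-1}a_f(p^{j-1})$) that
$$
D(s) = \frac{\zeta(s-k+1)}{\zeta(2s-2k+2)}\cdot L(s,\mathrm{sym}^2 f)\cdot(\text{correction at }p\mid N),
$$
or more usefully the elementary convolution identity $\sum_n |a_f(n)|^2 n^{-s} = \zeta(s-k+1)\cdot \frac{1}{\zeta_N(2s-2k+2)}\sum_n a_f(n^2) n^{-s}$, where $\zeta_N$ denotes the zeta function with Euler factors at $p\mid N$ removed. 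A cleaner route, and the one I expect to write out, is purely local: for each prime $p$ one checks the power-series identity
$$
\sum_{j\geq 0}|a_f(p^j)|^2 X^j = \frac{1}{1-p^{k-1}X}\cdot\Bigl(\sum_{j\geq 0} a_f(p^{2j}) X^j\Bigr)\cdot\bigl(1-p^{k-1}X\cdot[p\nmid N]\bigr)^{-1}\cdot(\cdots),
$$
adjusting the Euler factor at $p\mid N$ where $a_f(p^j) = a_f(p)^j$ and $|a_f(p)|^2 = p^{k-2}$ (for $p^2\nmid N$) or $a_f(p)=0$; taking the product over $p$ and substituting $X = p^{-s}$ then yields the global identity relating $D(s)$, $\zeta(s-k+1)$, and $\sum_n a_f(n^2) n^{-s}$.

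Once the Dirichlet-series identity $D(s) = \zeta(s-k+1)\cdot \Phi(s)$ with $\Phi(s) := \prod_{p\mid N}(1-p^{-(2s-2k+2)})^{-1}\cdot\frac{1}{\zeta(2s-2k+2)}\sum_n a_f(n^2)n^{-s}$ — or whatever precise shape the local computation dictates — is in hand, I would extract the asymptotic mean value. Since $\zeta(s-k+1)$ has a simple pole at $s = k$ with residue $1$, and $\Phi(s)$ is holomorphic and nonzero in a neighborhood of $s=k$, a standard Tauberian argument (or, since we only need the Cesàro mean $\frac{1}{x}\sum_{n\leq x} b_n$ with $b_n = |a_f(n)|^2 n^{-k+1}$ and $\sum b_n n^{-w}$ having a simple pole at $w=1$, an elementary partial-summation argument suffices) gives
$$
\lim_{x\to\infty}\frac{1}{x}\sum_{n\leq x}\frac{|a_f(n)|^2}{n^{k-1}} = \mathrm{Res}_{s=k} D(s) = \Phi(k) = \prod_{p\mid N}\Bigl(1-\frac{1}{p}\Bigr)\cdot\frac{1}{\zeta(2)^{?}}\cdots,
$$
and I would track the constants so that the $\zeta$-values at $s=k$ cancel against those hidden in $\sum_n a_f(n^2)n^{-k}$, leaving exactly $\prod_{p\mid N}(1-1/p)\cdot\sum_{n\geq 1} a_f(n^2)n^{-k}$. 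Plugging this residue back into the Rankin formula completes the proof.

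The main obstacle is bookkeeping the Euler factors at the bad primes $p\mid N$ correctly: for a newform of trivial character, $a_f(p)^2 = p^{k-2}$ when $p\|N$ (Steinberg/Atkin--Lehner type) and $a_f(p)=0$ when $p^2\mid N$, and the local factors of $\sum |a_f(p^j)|^2 X^j$ versus $\sum a_f(p^{2j}) X^j$ must be reconciled with these; getting the exponents in $(1-1/p)$ rather than $(1-1/p^{k-1})$ or similar requires care. I would handle this by writing the good-prime local identity once in closed form using the Satake parameters $\alpha_p, \beta_p$ with $\alpha_p\beta_p = p^{k-1}$, then separately verifying the two bad-prime cases by hand, and finally assembling the product; the convergence of the resulting Dirichlet series for $\Re(s) > k$ (hence legitimacy of the Tauberian step) follows from Deligne's bound $|a_f(n)|\leq d(n) n^{(k-1)/2}$ in Theorem~\ref{DeligneTheorem}.
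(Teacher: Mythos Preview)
Your approach is exactly the paper's: Rankin--Selberg plus the Hecke/Shimura identity relating $\sum a_f(n)^2 n^{-s}$ to $\sum a_f(n^2) n^{-s}$, followed by taking the residue (equivalently, the Ces\`aro mean) at $s=k$. The paper simply cites Shimura for the identity
\[
\sum_{n\ge 1}\frac{a_f(n)^2}{n^{s}}=\zeta_N(s-k+1)\sum_{n\ge 1}\frac{a_f(n^2)}{n^{s}},\qquad \zeta_N(s)=\sum_{(n,N)=1}n^{-s},
\]
and then reads off the residue $\prod_{p\mid N}(1-1/p)\cdot\sum a_f(n^2)n^{-k}$ from the simple pole of $\zeta_N(s-k+1)$ at $s=k$.

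The only thing to flag is that the explicit identities you wrote are not correct as stated: there is no factor $\zeta(2s-2k+2)^{-1}$ or $\zeta_N(2s-2k+2)^{-1}$ in the relation between $\sum a_f(n)^2 n^{-s}$ and $\sum a_f(n^2) n^{-s}$; that factor only appears if you go one step further and express $\sum a_f(n^2) n^{-s}$ in terms of $L(s,\mathrm{sym}^2 f)$, which is unnecessary here. If you actually carry out the local check you propose (good primes via the Hecke recursion, bad primes via $a_f(p)^2=p^{k-2}$ for $p\parallel N$ and $a_f(p)=0$ for $p^2\mid N$), you will land precisely on the displayed Shimura identity, and the constants then fall out immediately with no $\zeta(2)$ to chase.
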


\begin{proof}
It is well known (see Corollary 11.12.3 of \cite{cohen}) that
$
\sum\limits_{n\geq 1}\frac{|a_f(n)|^2}{n^s}
$
converges for $\Re(s)>k$ and has a simple pole at $s=k$ with residue
$
\frac{3}{\pi}\cdot\frac{(4\pi)^k}{(k-1)!}\langle f,f\rangle.
$
Since $f$ is a newform on $\Gamma_0(N)$ with trivial nebentypus, $a(n)$ is real for all $n$ and we have (see the introduction of \cite{ShimuraOnTheHolomorphy}) that
$$
\sum\limits_{n\geq 1}\frac{a_f(n)^2}{n^s}=\zeta_N(s-k+1)\sum\limits_{n\geq 1}\frac{a_f(n^2)}{n^s},
$$
where
$
\zeta_N(s)=\sum\limits_{\substack{n\geq 1 \\ (n,N)=1}}\frac{1}{n^s}.
$
The lemma follows by taking residues at $s=k.$
\end{proof}

\section{Petersson inner Product and the $V(d)$ Operator}\label{SectionPetersson}

To obtain the explicit bound in Theorem~\ref{MainTheorem}, we make use of upper bound on certain weighted sums of Hurwitz class numbers. In particular, we find an explicit bound on the Fourier coefficients given in Lemma~\ref{ThetaBracketsLemma}.
To obtain these explicit bounds, we require a partial decomposition of the brackets $\pi_{\hol}([\mathcal{H}(\tau),\theta(\tau)]_m)$ and $\pi_{\hol}([\mathcal{H}(\tau),\theta(4\tau)]_m)$ in terms of newforms. Namely, we decompose $\pi_{\hol}([\mathcal{H}(\tau),\theta(\tau)]_m)$ in the basis $\{f|V(d)\},$ where $f$ is a newform of level $N$ with $dN\mid 4.$ For $\pi_{\hol}([\mathcal{H}(\tau),\theta(4\tau)]_m),$ we require a similar partial decomposition. To obtain these decompositions, we take the inner product of these brackets with $f|V(d).$  Since $f$ and $f|V(d)$ are not always orthogonal when $d>1,$ we compute $\langle f,f|V(d)\rangle$ in terms of $\langle f,f\rangle.$ This depends on the level of $f.$ If $f$ is of level $4,$ we have the following lemma.

\begin{lemma}\label{VOperatorNewformLevel4}
If $f(\tau)=\sum\limits_{n\geq 1}a_f(n)q_{\tau}^n\in\mathcal{S}_k^{\text{new}}\left(\Gamma_0(4)\right)$ is a normalized newform and $l\geq 1,$ then 
$
\langle f,f|V(2^l)\rangle=0.
$ 
\end{lemma}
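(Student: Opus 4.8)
The plan is to evaluate this inner product by means of the Rankin--Selberg identity of Theorem~\ref{Rankin1}, and to observe that the associated Dirichlet series vanishes identically because every Fourier coefficient of $f$ indexed by a multiple of $2^l$ is zero.

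First I would record the arithmetic input on the coefficients of $f$. Since $f$ is a normalized newform on $\Gamma_0(4)$ with trivial nebentypus and the level $4=2^2$ is divisible by the square of the prime $2$, the classical theory of newforms gives $a_f(2)=0$ (see, e.g., \cite{CohenStromberg}); equivalently, $f$ is a $U(2)$-eigenform with $f|U(2)=a_f(2)f=0$. Because $2\mid 4$, the coefficients at powers of $2$ satisfy the degenerate recursion $a_f(2^{j+1})=a_f(2)\,a_f(2^j)$, so $a_f(2^j)=0$ for every $j\geq 1$. Writing any integer $n\geq 1$ divisible by $2^l$ as $n=2^a c$ with $c$ odd and $a\geq l\geq 1$, multiplicativity of $n\mapsto a_f(n)$ then yields $a_f(n)=a_f(2^a)\,a_f(c)=0$.

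Next I would apply Rankin--Selberg. The oldform $f|V(2^l)$ lies in $\mathcal{S}_k(\Gamma_0(2^{l+2}))$, with Fourier coefficients $a_{f|V(2^l)}(n)=a_f(n/2^l)$ if $2^l\mid n$ and $0$ otherwise, while $f\in\mathcal{S}_k(\Gamma_0(4))\subseteq\mathcal{S}_k(\Gamma_0(2^{l+2}))$. Applying Theorem~\ref{Rankin1} on the group $\Gamma=\Gamma_0(2^{l+2})$, and using that the coefficients of a newform of trivial nebentypus are real, gives
$$
\langle f,\, f|V(2^l)\rangle \;=\; \frac{\pi}{3}\cdot\frac{(k-1)!}{(4\pi)^k}\cdot\lim_{x\to\infty}\frac{1}{x}\sum_{\substack{n\leq x \\ 2^l\mid n}}\frac{a_f(n)\,a_f(n/2^l)}{n^{k-1}}.
$$
By the previous paragraph each $a_f(n)$ occurring in this sum (with $2^l\mid n$ and $l\geq 1$) vanishes, so the right-hand side is $0$, whence $\langle f, f|V(2^l)\rangle=0$.

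There is no real obstacle here: the only nontrivial ingredient is the standard vanishing $a_f(2)=0$ for a newform of level exactly $4$, which I would quote rather than reprove. One should, however, take a small moment to justify that Theorem~\ref{Rankin1} applies with both $f$ and $f|V(2^l)$ regarded as forms on $\Gamma_0(2^{l+2})$, and that the normalized Petersson product of the excerpt is insensitive to enlarging the ambient congruence subgroup, so that the displayed identity is legitimate.
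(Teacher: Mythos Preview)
Your argument is correct and is essentially the paper's own proof: apply the Rankin--Selberg identity (Theorem~\ref{Rankin1}) and observe that every term in the resulting series vanishes because $a_f(2n)=0$ for a normalized newform of level $4$ (the paper simply cites \cite[Prop.~13.3.14(a)]{CohenStromberg} for this last fact, whereas you spell out the multiplicativity argument). One cosmetic point: with the paper's slash normalization $(f|V(d))(\tau)=d^{k/2}f(d\tau)$, the Fourier coefficients of $f|V(2^l)$ carry an extra factor $2^{lk/2}$, but this scalar is of course irrelevant to the vanishing.
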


\begin{proof}

Theorem~\ref{Rankin1} implies that
\begin{align}
\langle f,f|V(2^l)\rangle\notag&=\frac{\pi}{3}\frac{(k-1)!}{(4\pi)^k}\lim\limits_{x\to\infty}\frac{1}{x}\sum\limits_{n\leq x}\frac{a_f(n)\cdot 2^{\frac{lk}{2}}\overline{a_f(\frac{n}{2^l})}}{n^{k-1}}\\
\notag&=\frac{\pi}{3}\frac{(k-1)!}{(4\pi)^k}\lim\limits_{x\to\infty}\frac{2^{-\frac{lk}{2}}}{x/2^l}\sum\limits_{n\leq x/2^l}\frac{a_f(2^ln)\overline{a_f(n)}}{n^{k-1}}.
\end{align}
It is well-known (see part (a) of Proposition 13.3.14 of \cite{CohenStromberg}) that the even coefficients of a normalized newform of level $4$ vanish, proving the claim.
\end{proof}

If $f$ is of level $2,$ we have the following lemma.

\begin{lemma}\label{VOperatorNewformLevel2}

If $f(\tau)=\sum\limits_{n\geq 1}a_f(n)q_{\tau}^n\in\mathcal{S}_k^{\text{new}}\left(\Gamma_0(2)\right)$ is a normalized newform and $l\geq 0,$ then
$$
\langle f,f|V(2^l)\rangle=\left(-\frac{\lambda_2\left(f\right)}{2}\right)^l\langle f,f\rangle.
$$
\end{lemma}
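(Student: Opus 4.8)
The plan is to compute $\langle f, f|V(2^l)\rangle$ via the Rankin--Selberg formula of Theorem~\ref{Rankin1}, exactly as in the proof of Lemma~\ref{VOperatorNewformLevel4}, and then exploit the Hecke recursion at the prime $2$. Writing $a_f(2^l n) $ in terms of $a_f(n)$ requires understanding the coefficients $a_f(2^j)$ for a newform of level $2$. Since $2 \mid N$, the Hecke eigenvalue at $2$ satisfies $a_f(2) = -\lambda_2(f) 2^{k/2-1}$ (this is the standard relation between the $U(2)$-eigenvalue and the Atkin--Lehner sign $\lambda_2(f)$ for newforms of level exactly divisible by $2$; see Theorem~\ref{Mult1} and the references around Corollary~\ref{Mult1Corollary}), and more generally $a_f(2^j) = a_f(2)^j = (-\lambda_2(f))^j 2^{j(k/2-1)}$ because the Euler factor at $2$ is degree one. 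Combined with multiplicativity, for $n$ odd we get $a_f(2^l n) = (-\lambda_2(f))^l 2^{l(k/2-1)} a_f(n)$, while for $n$ even the contribution can be handled by splitting off the exact power of $2$ dividing $n$.

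First I would substitute into Theorem~\ref{Rankin1} to obtain
$$
\langle f, f|V(2^l)\rangle = \frac{\pi}{3}\frac{(k-1)!}{(4\pi)^k} \lim_{x\to\infty}\frac{1}{x}\sum_{n\leq x}\frac{a_f(n)\,2^{lk/2}\,\overline{a_f(n/2^l)}}{n^{k-1}},
$$
where $a_f(n/2^l) = 0$ unless $2^l \mid n$. Reindexing $n = 2^l m$ gives
$$
\langle f, f|V(2^l)\rangle = \frac{\pi}{3}\frac{(k-1)!}{(4\pi)^k}\, 2^{-lk/2+l(k-1)} \lim_{x\to\infty}\frac{2^l}{x}\sum_{m\leq x/2^l}\frac{a_f(2^l m)\,\overline{a_f(m)}}{m^{k-1}}.
$$
Next I would write each $m = 2^j u$ with $u$ odd and use $a_f(2^{l}m) = a_f(2^{l+j}) a_f(u)/a_f(2^j) \cdot (\text{correction})$ — more cleanly, use $a_f(2^l m) = (-\lambda_2(f))^l 2^{l(k/2-1)} a_f(m)$, which holds for \emph{all} $m$ (not just odd $m$) precisely because the local Euler factor at $2$ has degree one, so $a_f(2^{l+j}) = a_f(2)^{l+j}$ for every $j \geq 0$ and hence $a_f(2^l \cdot 2^j u) = a_f(2)^{l+j} a_f(u) = a_f(2)^l \cdot a_f(2^j u)$. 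Substituting this identity, the sum becomes $(-\lambda_2(f))^l 2^{l(k/2-1)} \sum_{m\leq x/2^l} |a_f(m)|^2/m^{k-1}$, and applying Theorem~\ref{Rankin1} once more (now to $\langle f,f\rangle$, noting $\frac{1}{x}\sum_{m\le x/2^l}|a_f(m)|^2/m^{k-1} \to \frac{1}{2^l}\cdot\frac{3}{\pi}\frac{(4\pi)^k}{(k-1)!}\langle f,f\rangle$) collapses everything. Tracking the powers of $2$: the prefactor $2^{-lk/2+l(k-1)} = 2^{l(k/2-1)}$, times $2^{l(k/2-1)}$ from the coefficient identity, times the leftover $2^{-l}$ from the averaging, times $2^l$ from the reindexing, yields $2^{l(k-2)} \cdot 2^{-l} \cdot 2^l = 2^{l(k-2)}$... so I would carefully recombine to land on the stated $\left(-\lambda_2(f)/2\right)^l = (-\lambda_2(f))^l 2^{-l}$; the bookkeeping must reconcile with $|a_f(2)|^2 = 2^{k-2}$, which is exactly the constraint that makes the powers of $2$ work out.

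The main obstacle is the precise normalization constant at the prime $2$: one must correctly identify $a_f(2)$ for a newform of level $2$ in terms of the Atkin--Lehner/Fricke sign $\lambda_2(f)$ and verify $a_f(2)^2 = 2^{k-2}$, then carry the half-integer-free bookkeeping of powers of $2$ through two applications of Theorem~\ref{Rankin1} without error. Everything else — reindexing, multiplicativity, the degree-one Euler factor at $2$ — is routine. I would double-check the final constant against the $l=0$ case (which must give $\langle f,f\rangle$) and against $l=1$ using $\langle f, f|V(2)\rangle = \overline{a_f(2)}\cdot(\text{leading Rankin-Selberg behavior})$ to make sure the sign and the factor $1/2$ are both correct.
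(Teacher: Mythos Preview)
Your approach is essentially the same as the paper's: apply Theorem~\ref{Rankin1}, reindex, use the key identity $a_f(2^l m)=a_f(2)^l a_f(m)$ for \emph{all} $m$ (valid because the Euler factor at $2$ has degree one for a level-$2$ newform), and then convert $a_f(2)$ to $-\lambda_2(f)\,2^{k/2-1}$. The paper carries this out in three clean lines: after reindexing one lands directly on
\[
\langle f,f|V(2^l)\rangle=\frac{a_f(2)^l}{2^{lk/2}}\cdot\frac{\pi}{3}\frac{(k-1)!}{(4\pi)^k}\lim_{x\to\infty}\frac{1}{x}\sum_{n\leq x}\frac{|a_f(n)|^2}{n^{k-1}}=\frac{a_f(2)^l}{2^{lk/2}}\langle f,f\rangle,
\]
and substituting $a_f(2)=-\lambda_2(f)\,2^{k/2-1}$ gives the result immediately.

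Your explicit bookkeeping, however, contains an arithmetic slip: after the reindexing $n=2^l m$ the prefactor should be $2^{lk/2-l(k-1)}=2^{-l(k/2-1)}$, not $2^{-lk/2+l(k-1)}=2^{l(k/2-1)}$; this sign error then propagates and leads you to believe you must invoke $|a_f(2)|^2=2^{k-2}$ to make the powers of $2$ reconcile. In fact that relation is never needed: once the reindexing is done correctly the total power of $2$ is simply $2^{-lk/2}$, and the identity $a_f(2)^l/2^{lk/2}=(-\lambda_2(f)/2)^l$ finishes the computation without any appeal to $|a_f(2)|^2$. So the strategy is right and matches the paper, but you should redo the power-of-$2$ tracking more carefully (or, better, factor out $a_f(2)^l$ at the earliest opportunity, as the paper does).
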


\begin{proof}

As in the proof of Lemma~\ref{VOperatorNewformLevel4}, we have that
$$
\langle f,f|V(2^l)\rangle=\frac{\pi}{3}\frac{(k-1)!}{(4\pi)^k}\cdot 2^{-\frac{lk}{2}}\lim\limits_{x\to\infty}\frac{1}{x}\sum\limits_{n\leq x}\frac{a_f(2^ln)\overline{a_f(n)}}{n^{k-1}}.
$$
Since $f$ is a normalized newform of level $2,$ we have that $a_f(2^{l}n)=a_f(2)^{l}\cdot a_f(n)$ for all $n\geq 1.$ This implies that
$$
\langle f,f|V(2^l)\rangle=\frac{a_f(2)^l}{2^{\frac{lk}{2}}}\cdot \frac{\pi}{3}\frac{(k-1)!}{(4\pi)^k}\lim_{x\to\infty}\cdot\frac{1}{x}\sum\limits_{n\leq x}\frac{a_f(n)\overline{a_f(n)}}{n^{k-1}}.
$$
Since $f$ is of prime level $2,$ the eigenvalue $\lambda_2(f)$ of the Fricke involution can be written in terms of the Fourier coefficient $a_f(2)$ of $f$ (see part (b) of Proposition 13.3.14 of \cite{CohenStromberg}). Namely, we have that $\lambda_2(f)=-2^{1-\frac{k}{2}}a_f(2).$  An application of Theorem~\ref{Rankin1} then concludes the proof.
\end{proof}

If $f$ is of level $1,$ we have the following lemma.

\begin{lemma}\label{VOperatorNewformLevel1}
If $f(\tau)=\sum\limits_{n\geq 1}a_f(n)q_{\tau}^n\in\mathcal{S}_k\left(\SL_2(\Z)\right)$ is a normalized newform, then the following are true.

\noindent
(1) 
$$
\langle f,f|V(2)\rangle=\frac{1}{3}\cdot\frac{a_f(2)}{2^{k/2-1}}\langle f,f\rangle.
$$

\noindent
(2) 
$$
\langle f,f|V(4)\rangle=\left(\frac{1}{6}\cdot\frac{a_f(2)^2}{4^{k/2-1}}-\frac{1}{2}\right)\langle f,f\rangle.
$$

\noindent
(3) 
$$
\langle f,f|V(8)\rangle=\left(\frac{1}{12}\cdot\frac{a_f(2)^3}{8^{k/2-1}}-\frac{5}{12}\cdot\frac{a_f(2)}{2^{k/2-1}}\right)\langle f,f\rangle.
$$

\noindent
(4) 
$$
\langle f,f|V(16)\rangle=\left(\frac{1}{24}\cdot\frac{a_f(2)^4}{16^{k/2-1}}-\frac{7}{24}\cdot\frac{a_f(2)^2}{4^{k/2-1}}+\frac{1}{4}\right)\langle f,f\rangle.
$$
\end{lemma}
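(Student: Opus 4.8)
The plan is to imitate the proofs of Lemmas~\ref{VOperatorNewformLevel4} and~\ref{VOperatorNewformLevel2}: apply the Rankin--Selberg identity of Theorem~\ref{Rankin1} to both $\langle f,f|V(2^l)\rangle$ and $\langle f,f\rangle$, use multiplicativity of the newform coefficients to strip off the odd part, and reduce the whole computation to a short recursion coming from the Hecke relation at $p=2$.

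\textbf{Step 1 (Rankin--Selberg).} Since $f$ has level $1$ its coefficients are real, and $f|V(2^l)\in\mathcal{S}_k(\Gamma_0(2^l))$ while $f\in\mathcal{S}_k(\SL_2(\Z))\subseteq\mathcal{S}_k(\Gamma_0(2^l))$. Theorem~\ref{Rankin1} then gives, exactly as in the earlier lemmas,
$$
\langle f,f|V(2^l)\rangle=\frac{\pi}{3}\frac{(k-1)!}{(4\pi)^k}\,2^{-lk/2}\lim_{x\to\infty}\frac1x\sum_{n\le x}\frac{a_f(2^l n)a_f(n)}{n^{k-1}},\qquad \langle f,f\rangle=\frac{\pi}{3}\frac{(k-1)!}{(4\pi)^k}\lim_{x\to\infty}\frac1x\sum_{n\le x}\frac{a_f(n)^2}{n^{k-1}};
$$
equivalently, the Dirichlet series $\sum_n a_f(2^ln)a_f(n)n^{-s}$ and $\sum_n a_f(n)^2n^{-s}$ are holomorphic for $\Re(s)>k$ with a simple pole at $s=k$, and $\langle f,f|V(2^l)\rangle/\langle f,f\rangle$ equals $2^{-lk/2}$ times the ratio of their residues there.

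\textbf{Step 2 (isolate the $2$-part).} Writing $n=2^am$ with $m$ odd and using multiplicativity, $a_f(2^ln)=a_f(2^{l+a})a_f(m)$ and $a_f(n)=a_f(2^a)a_f(m)$, so for $l\ge0$
$$
\sum_n\frac{a_f(2^ln)a_f(n)}{n^s}=N_l(2^{-s})\prod_{p\ \mathrm{odd}}\sum_{a\ge0}\frac{a_f(p^a)^2}{p^{as}},\qquad N_l(x):=\sum_{a\ge0}a_f(2^{l+a})a_f(2^a)x^a,
$$
with $N_0(x)=D(x):=\sum_{a\ge0}a_f(2^a)^2x^a$ (by Deligne's bound $|a_f(2^j)|\le(j+1)2^{j(k-1)/2}$ the series $N_l(2^{-k}),D(2^{-k})$ converge absolutely). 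The odd Euler product is identical for $\sum a_f(2^ln)a_f(n)n^{-s}$ and $\sum a_f(n)^2n^{-s}$, so it cancels from the ratio of residues at $s=k$, leaving
$$
\frac{\langle f,f|V(2^l)\rangle}{\langle f,f\rangle}=2^{-lk/2}\,\frac{N_l(2^{-k})}{D(2^{-k})}.
$$

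\textbf{Step 3 (recursion) and Step 4 (evaluation).} The Hecke relation $a_f(2^{j+1})=a_f(2)a_f(2^j)-2^{k-1}a_f(2^{j-1})$ (for $j\ge0$, with the convention $a_f(2^{-1})=0$), after shifting the summation index in $N_l$, yields
$$
N_l(x)=a_f(2)N_{l-1}(x)-2^{k-1}N_{l-2}(x)\ \ (l\ge2),\qquad N_1(x)\bigl(1+2^{k-1}x\bigr)=a_f(2)D(x).
$$
Evaluating at $x=2^{-k}$, where $2^{k-1}x=\tfrac12$ and $1+2^{k-1}x=\tfrac32$, gives $N_1(2^{-k})/D(2^{-k})=\tfrac23a_f(2)$; iterating the recursion then produces $N_2/D,N_3/D,N_4/D$ at $x=2^{-k}$ as explicit polynomials in $a_f(2)$ and $2^k$. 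Multiplying by $2^{-lk/2}$ and rewriting $2^{k-1}=2\cdot4^{k/2-1}$, $2^{3k/2-1}=4\cdot8^{k/2-1}$, $2^{2k-1}=8\cdot16^{k/2-1}$ converts these into the four displayed formulas.

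\textbf{Main obstacle.} There is no conceptual difficulty once the recursion of Step~3 is in hand; the only care needed is the bookkeeping in Step~2 --- justifying that the Rankin--Selberg limit (equivalently the relevant residue) factors as (odd-part Euler product)\,$\times$\,($2$-part series) and that this factorization survives passage to residues --- together with the mechanical but error-prone simplification of the powers of $2$ in Step~4.
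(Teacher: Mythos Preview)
Your argument is correct and rests on the same two ingredients as the paper's proof: the Rankin--Selberg identity of Theorem~\ref{Rankin1} (which converts $\langle f,f|V(2^l)\rangle$ into a statement about the coefficients $a_f(2^l n)a_f(n)$) and the Hecke recursion at $p=2$. The packaging, however, is somewhat different. The paper works at the level of operators: it writes $T(2)f=f|U(2)+2^{k/2-1}f|V(2)$, uses that $T(2)f=a_f(2)f$, and takes inner products with $f$, invoking the identity $\langle f|U(2^l),f\rangle=2^{lk/2}\langle f,f|V(2^l)\rangle$ to close up the resulting linear relation; for $l\ge2$ one has to iterate this by expressing $T(2^l)$ in terms of $U(2^j)$ and $V(2^j)$, which the paper leaves to the reader. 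You instead factor the Rankin--Selberg Dirichlet series as (odd Euler product)$\times N_l(2^{-s})$ and run the Hecke recursion directly on the generating functions $N_l(x)$, obtaining a clean two-term recurrence $N_l=a_f(2)N_{l-1}-2^{k-1}N_{l-2}$ seeded by $N_1(2^{-k})/D(2^{-k})=\tfrac{2}{3}a_f(2)$.

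The two routes are equivalent---your factor $1+2^{k-1}x\big|_{x=2^{-k}}=\tfrac32$ is exactly the paper's $2^{k/2}+2^{k/2-1}=\tfrac32\cdot 2^{k/2}$---but your generating-function formulation has the advantage of producing all four cases uniformly from a single linear recursion, whereas the operator approach becomes progressively more cumbersome as $l$ grows. The one point worth stating a bit more carefully is the passage from the averaged limit in Theorem~\ref{Rankin1} to the ratio $N_l(2^{-k})/D(2^{-k})$: since $N_l(2^{-k})$ and $D(2^{-k})$ converge absolutely (and $D(2^{-k})\ge 1>0$) while the simple pole at $s=k$ lives entirely in the common odd Euler factor, the ratio of residues is exactly the ratio of the $2$-local factors, as you use.
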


\begin{proof}

The same argument in the proofs of Lemmas~\ref{VOperatorNewformLevel4} and ~\ref{VOperatorNewformLevel2} gives us that
$$
\langle f,f|V(2^l)\rangle= 2^{-\frac{lk}{2}}\langle f|U(2^l),f\rangle,
$$
for all $l\geq 0.$ Since $f$ is of level $1,$ the relation between $a(2^ln)$ and $a(n)$ is more involved. Therefore, we must determine the Hecke operator $T(2^l)$ in terms of $U$ and $V$ operators. We make this precise for (1), and leave the remaining cases to the reader.

Hecke's recurrence relations (see Corollary 10.4.4 of \cite{CohenStromberg}) imply that $T(2)f=f|U(2)+2^{\frac{k}{2}-1} f|V(2).$  On the other hand, since $f$ is a normalized eigenform of $T(2),$ we have that $T(2)f=a_f(2)f.$ Combining these two equations together, we have that
$$
a_f(2)f=f|U(2)+2^{\frac{k}{2}-1} f|V(2).
$$
Taking the inner product with $f$ on both sides, we find that
\begin{align}
a_f(2)\langle f,f\rangle\notag&=\langle f,f|U(2)\rangle+2^{\frac{k}{2}-1}\langle f,f|V(2)\rangle\\
\notag&=2^{\frac{k}{2}}\langle f,f|V(2)\rangle+2^{\frac{k}{2}-1}\langle f,f|V(2)\rangle.
\end{align}
This concludes the proof of (1). The proofs of the remaining statements are conceptually similar but more involved.
\end{proof}

\section{Explicit bounds for weighted sums of class numbers}\label{SectionCoefficients} 

To estimate the weighted class number sums in (\ref{Eq1CN}) and (\ref{Eq2CN}) , we observed that they appear in the expressions of Fourier coefficients of the cusp forms $\pi_{\hol}([\mathcal{H}(\tau),\theta(\tau)]_m)$ and $\pi_{\hol}([\mathcal{H}(\tau),\theta(4\tau)]).$ In this section we decompose these forms in the basis of newforms $f$ and their images under $V(d)$ operators and apply Theorem~\ref{DeligneTheorem} to bound the Fourier coefficients.

\subsection{Decomposition of $\pi_{\hol}([\mathcal{H}(\tau),\theta(\tau)]_{m})$}

Here we decompose $\pi_{\hol}([\mathcal{H}(\tau),\theta(\tau)]_m)$ for $m\in\N.$ To this end, we compute the inner product of $\pi_{\hol}([\mathcal{H}(\tau),\theta(\tau)]_m)$ with the elements of $\{f|V(d)\},$ where $f$ is a normalized newform of level $N$ and $dN|4.$  The first lemma concerns level $4.$

\begin{lemma}\label{ScalarThetaNewformLevel4}
If $f(\tau)=\sum\limits_{n\geq 1}a_f(n)q_{\tau}^n \in\mathcal{S}_{2m+2}^{\text{new}}\left(\Gamma_0(4)\right)$ is a normalized newform, then
$$
\langle \pi_{\hol}([\mathcal{H},\theta]_{m}),f\rangle=-\frac{1}{3}\cdot\frac{{\binom{2m}{m}}}{4^m}\langle f,f \rangle. 
$$
\end{lemma}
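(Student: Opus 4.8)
The plan is to compute $\langle \pi_{\hol}([\mathcal{H},\theta]_m), f\rangle$ for a normalized newform $f\in\mathcal{S}_{2m+2}^{\mathrm{new}}(\Gamma_0(4))$ by reducing to the unfolding formula in Proposition~\ref{UnfoldingInGeneral}. First I would invoke Theorem~\ref{ZagierHMSTheorem} to write $\mathcal{H}(\tau) = -\tfrac{1}{12}\bigl(E_{\frac32}(\tau) + (1-i)2^{-3/2}F_{\frac32}(\tau)\bigr)$, so that by bilinearity of the Rankin-Cohen bracket in the first slot,
$$
[\mathcal{H},\theta]_m = -\tfrac{1}{12}[E_{\frac32},\theta]_m - \tfrac{1}{12}(1-i)2^{-3/2}[F_{\frac32},\theta]_m.
$$
By part (2) of Proposition~\ref{HolProjProp}, $\langle \pi_{\hol}([\mathcal{H},\theta]_m), f\rangle = \langle [\mathcal{H},\theta]_m, f\rangle$ (the required moderate-growth hypotheses for holomorphic projection of these Rankin-Cohen brackets are exactly the setting of Theorem~\ref{HolProjTh}), so it suffices to evaluate $\langle [E_{\frac32},\theta]_m, f\rangle$ and $\langle [F_{\frac32},\theta]_m, f\rangle$ separately.

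For the $E_{\frac32}$ term I would apply Proposition~\ref{UnfoldingInGeneral} with $g = \theta$, which gives
$$
\langle f, [E_{\frac32},\theta]_m\rangle = \tfrac16\binom{m-\frac12}{m}\frac{(2m)!}{(4\pi)^{2m+1}}\sum_{n\geq 1}\frac{a_f(n)\overline{a_\theta(n)}}{n^{m+1}}.
$$
Since $\theta(\tau) = \sum_{n\in\Z}q_\tau^{n^2} = 1 + 2\sum_{n\geq 1}q_\tau^{n^2}$, the coefficient $a_\theta(n)$ is $2$ when $n$ is a perfect square and $0$ otherwise, so the Dirichlet series collapses to $2\sum_{n\geq 1} a_f(n^2)/n^{2m+2}$. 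By Lemma~\ref{IntermediateUnfolding} applied with $k = 2m+2$ and $N = 4$ (for which $\prod_{p\mid N}(1-1/p) = 1/2$), this sum equals $\frac{3}{\pi}\cdot\frac{(4\pi)^{2m+2}}{(2m+1)!}\cdot 2 \cdot \langle f,f\rangle$; assembling the constants — being careful with the identity $\binom{m-1/2}{m} = \binom{2m}{m}/4^m$ — yields $\langle [E_{\frac32},\theta]_m, f\rangle = \tfrac{1}{3}\cdot\frac{\binom{2m}{m}}{4^m}\langle f,f\rangle$ up to the precise scalar, which I would pin down by a direct bookkeeping of factorials.

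For the $F_{\frac32}$ term I would use that $F_{\frac32} = E_{\frac32}|_{\frac32}W_4$ together with part (1) of Lemma~\ref{FrickeProps}, namely $\theta|W_4 = e^{-i\pi/4}\theta$ and $E_{\frac32}|(W_4^2) = iE_{\frac32}$, and the fact that the slash operator is an isometry (so $\langle f,g\rangle = \langle f|W_4, g|W_4\rangle$) combined with part (2) of Proposition~\ref{BracketProposition} which says Rankin-Cohen brackets commute with the slash action. This converts $\langle [F_{\frac32},\theta]_m, f\rangle$ into $\langle [E_{\frac32},\theta]_m, f|W_4\rangle$ times an explicit root of unity, and then Corollary~\ref{Mult1Corollary}(1) replaces $f|W_4$ by $\lambda_4(f)f$. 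The main obstacle I anticipate is keeping all the constants, the half-integral-weight automorphy factors, and the various powers of $i$ and $2$ exactly straight through the Fricke manipulation — in particular determining the precise scalar by which $[F_{\frac32},\theta]_m$ contributes and confirming that, after combining with the $-\tfrac{1}{12}$ and $-\tfrac{1}{12}(1-i)2^{-3/2}$ coefficients, the $\lambda_4(f)$-dependence either cancels or is absorbed so that the final answer is the clean scalar $-\tfrac13\cdot\frac{\binom{2m}{m}}{4^m}\langle f,f\rangle$ independent of $f$. I would finish by verifying this cancellation and recording the sign.
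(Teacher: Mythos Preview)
Your approach is essentially identical to the paper's. The one ingredient you flag as uncertain is resolved not by cancellation but by the fact that every normalized newform $f\in\mathcal{S}_{2m+2}^{\text{new}}(\Gamma_0(4))$ with trivial character satisfies $\lambda_4(f)=-1$ (the paper invokes Theorem~7 of Atkin--Lehner for this), so that the combination $-\tfrac{1}{12}-\tfrac{1}{24}\lambda_4(f)$ becomes $-\tfrac{1}{24}$ and the stated constant follows.
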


\begin{proof}
By Proposition~\ref{HolProjProp}, we have
$$
\langle \pi_{\hol}([\mathcal{H},\theta]_{m}),f\rangle=\langle [\mathcal{H},\theta]_{m},f \rangle. 
$$
We write $\mathcal{H}$ in terms of Eisenstein series to obtain
$$
\langle [\mathcal{H},\theta]_{m},f \rangle=-\frac{1}{12}\langle [E_{\frac{3}{2}},\theta]_{m},f \rangle-\frac{1}{12}\cdot\frac{1-i}{2^{3/2}}\langle [E_{\frac{3}{2}}|W_4,\theta]_{m},f\rangle.
$$
Since $W_{4}^2$ acts trivially on $\mathcal{M}_k(\Gamma_0(4))$ when $k$ is an even integer,  part (2) of Proposition~\ref{BracketProposition} shows that
$$
\langle [E_{\frac{3}{2}}|W_4,\theta]_{m},f\rangle=\langle [E_{\frac{3}{2}}|(W_4)^2,\theta|W_4]_{m},f|W_4\rangle.
$$
Using Lemma~\ref{FrickeProps}, we have that
$$
\langle [\mathcal{H},\theta]_m,f\rangle=-\frac{1}{12}\langle [E_{\frac{3}{2}},\theta]_m,f\rangle-\frac{1}{24}\langle [E_{\frac{3}{2}},\theta]_m, f|W_{4}\rangle.
$$
Since $f$ is a newform of level $4, f|W_4=-f$ (see Theorem 7 of \cite{AtkinLehner}). Therefore, we have
$$
\langle \pi_{\hol}([\mathcal{H},\theta]_{m}),f\rangle=-\frac{1}{24}\langle [E_{\frac{3}{2}},\theta]_m, f\rangle.
$$
Proposition~\ref{UnfoldingInGeneral} then gives that
$$
\langle \pi_{\hol}([\mathcal{H},\theta]_m),f\rangle=-\frac{1}{144}\cdot{\binom{m+\frac{1}{2}}{m} }\frac{(2m)!}{(4\pi)^{2m+1}}\sum\limits_{n\geq 1}\frac{2\cdot a_f(n^2)}{n^{2m+2}}.
$$
We apply Lemma~\ref{IntermediateUnfolding} to obtain
$$
\langle \pi_{\hol}([\mathcal{H},\theta]_m),f\rangle=-\frac{1}{72}\cdot{\binom{m+\frac{1}{2}}{m}}\frac{(2m)!}{(4\pi)^{2m+1}}\cdot 2\cdot\frac{(4\pi)^{2m+2}}{(2m+1)!}\cdot\frac{3}{\pi}\langle f,f\rangle.
$$
Since $\frac{{\binom{m+\frac{1}{2}}{ m}}}{2m+1}=\frac{{\binom{2m}{m}}}{4^m}, $ the claim follows.
\end{proof}

If $f$ is of level $2,$ we have the following lemma.

\begin{lemma}\label{ScalarThetaNewformLevel2}
If $f(\tau)=\sum\limits_{n\geq 1}a_f(n)q_{\tau}^n \in\mathcal{S}_{2m+2}^{\text{new}}\left(\Gamma_0(2)\right)$ is a normalized newform, then the following are true.

\noindent
(1)
 $$
\langle \pi_{\hol}([\mathcal{H},\theta]_{m}),f\rangle=-\frac{1}{2}\cdot\frac{{\binom{2m}{m}}}{4^m}\langle f,f \rangle. 
$$

\noindent
(2) $$
\langle \pi_{\hol}([\mathcal{H},\theta]_{m}),f|V(2)\rangle=0. 
$$
\end{lemma}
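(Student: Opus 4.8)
The plan is to imitate the proof of Lemma~\ref{ScalarThetaNewformLevel4}, the only new feature being that for a level-$2$ newform $f$ the forms $f$ and $f|V(2)$ need not be orthogonal and $f|W_4\neq\pm f$. First I would use Proposition~\ref{HolProjProp}(2) to replace $\pi_{\hol}([\mathcal{H},\theta]_m)$ by $[\mathcal{H},\theta]_m$ inside the inner product (note $f,\,f|V(2)\in\mathcal{S}_{2m+2}(\Gamma_0(4))$, so the relevant Petersson products make sense). Writing $\mathcal{H}=-\tfrac1{12}\bigl(E_{\frac32}+(1-i)2^{-3/2}F_{\frac32}\bigr)$ with $F_{\frac32}=E_{\frac32}|W_4$ (Theorem~\ref{ZagierHMSTheorem}), both parts reduce to evaluating $\langle[E_{\frac32},\theta]_m,h\rangle$ and $\langle[E_{\frac32}|W_4,\theta]_m,h\rangle$ for $h=f$ (part (1)) and $h=f|V(2)$ (part (2)).

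For the $F_{\frac32}$-pieces I would invoke $W_4$-invariance of the Petersson product on $\Gamma_0(4)$ together with Proposition~\ref{BracketProposition}(2) and the Fricke formulas $E_{\frac32}|W_4^2=iE_{\frac32}$, $\theta|W_4=e^{-i\pi/4}\theta$ of Lemma~\ref{FrickeProps}(1) to get $\langle[E_{\frac32}|W_4,\theta]_m,h\rangle=e^{i\pi/4}\langle[E_{\frac32},\theta]_m,h|W_4\rangle$, just as in the level-$4$ case. The essential new input is the factorization $W_4=W_2V(2)$ in $\GL_2^+(\Q)$ (and $V(2)W_4=2W_2$, with scalar matrices acting trivially under the $\det(\gamma)^{k/2}$-normalized slash): combined with Corollary~\ref{Mult1Corollary}(1), which gives $f|W_2=\lambda_2(f)f$, this yields $f|W_4=\lambda_2(f)\,f|V(2)$ and $(f|V(2))|W_4=\lambda_2(f)\,f$. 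Thus everything is expressed through the two scalars $\langle[E_{\frac32},\theta]_m,f\rangle$ and $\langle[E_{\frac32},\theta]_m,f|V(2)\rangle$.

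To evaluate these I would run the unfolding argument of Proposition~\ref{UnfoldingInGeneral} and then Lemma~\ref{IntermediateUnfolding}, exactly as in Lemma~\ref{ScalarThetaNewformLevel4}; since $\prod_{p\mid 2}(1-1/p)=\prod_{p\mid 4}(1-1/p)=\tfrac12$ the first comes out to $8\binom{2m}{m}4^{-m}\langle f,f\rangle$ verbatim. For the second, I would use $a_{f|V(2)}(n)=2^{m+1}a_f(n/2)$, note that the coefficients of $\theta$ cut the Rankin--Selberg sum down to indices $n=4k^2$, and invoke the Hecke relation $a_f(2k^2)=a_f(2)a_f(k^2)$ at the level prime $2$ to factor out $a_f(2)$, obtaining $\langle[E_{\frac32},\theta]_m,f|V(2)\rangle=4\,a_f(2)\,2^{-m}\binom{2m}{m}4^{-m}\langle f,f\rangle$. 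Finally I would substitute the Atkin--Lehner relation $\lambda_2(f)=-2^{-m}a_f(2)$ (equivalently $a_f(2)^2=4^m$, $\lambda_2(f)a_f(2)=-2^m$), as recorded in the proof of Lemma~\ref{VOperatorNewformLevel2}: in part (1) the $F_{\frac32}$-term equals $+\tfrac16\binom{2m}{m}4^{-m}\langle f,f\rangle$, which combined with the $E_{\frac32}$-term $-\tfrac23\binom{2m}{m}4^{-m}\langle f,f\rangle$ gives $-\tfrac12\binom{2m}{m}4^{-m}\langle f,f\rangle$; in part (2) the two contributions are $\pm\tfrac{a_f(2)}{3\cdot 2^m}\binom{2m}{m}4^{-m}\langle f,f\rangle$ and cancel identically.

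The hard part will be bookkeeping rather than conceptual: one must track the hidden factor $\det(\gamma)^{k/2}$ in the $V(d)$-slash (it supplies the crucial $2^{m+1}$), the eighth-roots-of-unity produced by $\theta|W_4$ and $E_{\frac32}|W_4^2$, and the exact shape of the Hecke and Atkin--Lehner relations at the bad prime $2$. Once the factorization $W_4=W_2V(2)$ is in place, the rest is the same mechanical unfolding computation as in the level-$4$ case.
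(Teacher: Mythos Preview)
Your proposal is correct and follows essentially the same route as the paper: reduce to $\langle[E_{3/2},\theta]_m,h\rangle$ and $\langle[E_{3/2},\theta]_m,h|W_4\rangle$ via the Fricke isometry and Lemma~\ref{FrickeProps}, use $W_4=W_2V(2)$ (so $f|W_4=\lambda_2(f)\,f|V(2)$ and $(f|V(2))|W_4=\lambda_2(f)\,f$), unfold with Proposition~\ref{UnfoldingInGeneral}, collapse the $f|V(2)$-sum to $a_f(2)\sum a_f(k^2)/k^{2m+2}$ via the Hecke relation at~$2$, and finish with $\lambda_2(f)a_f(2)=-2^m$. The paper carries out exactly this computation for part~(1) and simply declares part~(2) ``similar''; your outline makes both parts explicit, and your numerical checkpoints ($8\binom{2m}{m}4^{-m}\langle f,f\rangle$, $4a_f(2)2^{-m}\binom{2m}{m}4^{-m}\langle f,f\rangle$, and the final $-\tfrac23+\tfrac16=-\tfrac12$) all match.
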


\begin{proof}
The same argument in the proof of Lemma~\ref{ScalarThetaNewformLevel4} gives us that 
$$
\langle [\mathcal{H},\theta]_m, f\rangle=-\frac{2}{3}\cdot\frac{{\binom{2m}{m}}}{4^m}\langle f,f\rangle-\frac{1}{24}\langle [E_{\frac{3}{2}},\theta]_m, f|W_4\rangle.
$$
To emulate the rest of the argument, we need to determine $f|W_4.$ To this end, note that $f|W_4=f|W_2V(2)=\lambda_2(f)\cdot f|V(2).$ This implies that 
$$
(f|W_4)(\tau)=\lambda_2(f)\cdot 2^{m+1}\sum\limits_{n\geq 1} a_f\left(\frac{n}{2}\right)q_{\tau}^n.
$$
Therefore, we have that
\begin{align}
\langle [E_{\frac{3}{2}},\theta]_m, f|W_4\rangle \notag&=\frac{1}{6}{\binom{m+\frac{1}{2}}{m}}\cdot\frac{(2m)!}{(4\pi)^{2m+1}}\cdot 2^{m+1}\lambda_2(f)\cdot\sum\limits_{n\geq 1}\frac{2\cdot a_f\left(\frac{n^2}{2}\right)}{n^{2m+2}}\\
\notag&=\frac{1}{6}{\binom{m+\frac{1}{2}}{m}}\cdot\frac{(2m)!}{(4\pi)^{2m+1}}\cdot 2^{m+2}\lambda_2(f)\cdot\sum\limits_{n\geq 1}\frac{a_f(2n^2)}{2^{2m+2}n^{2m+2}}\\
\notag&=\frac{1}{6}{\binom{m+\frac{1}{2}}{m}}\cdot\frac{(2m)!}{(4\pi)^{2m+1}}\cdot \lambda_2(f)\cdot\frac{a_f(2)}{2^{m}}\sum\limits_{n\geq 1}\frac{a_f(n^2)}{n^{2m+2}}\\
\notag&=-\frac{1}{6}{\binom{m+\frac{1}{2}}{ m}}\cdot\frac{(2m)!}{(4\pi)^{2m+1}}\sum\limits_{n\geq 1}\frac{a_f(n^2)}{n^{2m+2}}.
\end{align}
The proof of Lemma~\ref{ScalarThetaNewformLevel4} then shows that
$$
\langle \pi_{\hol}([\mathcal{H},\theta]_{m}),f\rangle=-\frac{1}{2}\cdot\frac{{\binom{2m}{m}}}{4^m}\langle f,f \rangle. 
$$
The proof of (2) is similar.
\end{proof}

If $f$ is of level $1,$ we have the following lemma.

\begin{lemma}\label{ScalarThetaNewformLevel1}
If $f(\tau)=\sum\limits_{n\geq 1}a_f(n)q_{\tau}^n \in\mathcal{S}_{2m+2}(\SL_2(\Z))$ is a normalized newform, then the following are true.

\noindent
(1) $$
\langle \pi_{\hol}([\mathcal{H},\theta]_{m}),f\rangle=-\frac{1}{2}\cdot\frac{{\binom{2m}{m}}}{4^m}\langle f,f \rangle. 
$$

\noindent
(2) $$
\langle \pi_{\hol}([\mathcal{H},\theta]_{m}),f|V(2)\rangle=-\frac{1}{6}\cdot\frac{a_f(2)}{2^{m}}\cdot\frac{{\binom{2m}{m}}}{4^m}\langle f,f \rangle. 
$$

\noindent
(3) $$
\langle \pi_{\hol}([\mathcal{H},\theta]_{m}),f|V(4)\rangle=-\frac{1}{2}\cdot\frac{{\binom{2m}{m}}}{4^m}\langle f,f \rangle. 
$$
\end{lemma}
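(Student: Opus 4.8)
The plan is to follow the template of the proofs of Lemmas~\ref{ScalarThetaNewformLevel4} and~\ref{ScalarThetaNewformLevel2}, reducing all three parts to the evaluation of a single Dirichlet series. Write $k=2m+2$. By Proposition~\ref{HolProjProp}(2) we have $\langle\pi_{\hol}([\mathcal{H},\theta]_m),g\rangle=\langle[\mathcal{H},\theta]_m,g\rangle$ for every $g\in\mathcal{S}_{k}(\Gamma_0(4))$; expanding $\mathcal{H}=-\tfrac{1}{12}\big(E_{\frac{3}{2}}+(1-i)2^{-3/2}F_{\frac{3}{2}}\big)$ with $F_{\frac{3}{2}}=E_{\frac{3}{2}}|W_4$ and feeding the Fricke identities of Lemma~\ref{FrickeProps} through Proposition~\ref{BracketProposition}(2) exactly as in the proof of Lemma~\ref{ScalarThetaNewformLevel4}, one obtains the master identity
$$
\langle\pi_{\hol}([\mathcal{H},\theta]_m),g\rangle=-\tfrac{1}{12}\langle[E_{\frac{3}{2}},\theta]_m,g\rangle-\tfrac{1}{24}\langle[E_{\frac{3}{2}},\theta]_m,g|W_4\rangle,\qquad g\in\mathcal{S}_k(\Gamma_0(4)).
$$
I apply this to $g=f|V(d)$ for $d\in\{1,2,4\}$. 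Since $f$ has level $1$ we have $f|S=f$ for the generator $S$ of $\SL_2(\Z)$, hence $f|W_e=f|(SV(e))=f|V(e)$ for every positive integer $e$; since $V(d)W_4$ is a scalar multiple of $W_{4/d}$ and scalar matrices act trivially under $|_k$, this gives $(f|V(d))|W_4=f|V(4/d)$. In particular $(f|V(1))|W_4=f|V(4)$, $(f|V(2))|W_4=f|V(2)$, and $(f|V(4))|W_4=f$, so the master identity becomes a relation among the three numbers $\langle[E_{\frac{3}{2}},\theta]_m,f|V(d)\rangle$.

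To evaluate these, note that $\theta(\tau)=1+2\sum_{n\geq1}q_{\tau}^{n^2}$, so Proposition~\ref{UnfoldingInGeneral} produces a constant $c_m$, independent of $g$, with $\langle[E_{\frac{3}{2}},\theta]_m,g\rangle=c_m\sum_{\ell\geq1}a_g(\ell^2)/\ell^{k}$ for $g\in\mathcal{S}_k(\Gamma_0(4))$. Writing $(f|V(d))(\tau)=d^{m+1}\sum_n a_f(n)q_{\tau}^{dn}$, the sum over $\ell$ collapses: for $d=1$ and $d=4$ it equals $\Sigma_0:=\sum_{j\geq1}a_f(j^2)/j^{k}$, while for $d=2$ it equals $2^{-(m+1)}\Sigma_2$ with $\Sigma_2:=\sum_{j\geq1}a_f(2j^2)/j^{k}$. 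Lemma~\ref{IntermediateUnfolding} applied to the level-$1$ newform $f$ (so $\prod_{p\mid N}(1-1/p)=1$), combined with the identity $\binom{m+1/2}{m}/(2m+1)=\binom{2m}{m}/4^m$, evaluates $\Sigma_0$ and yields $\langle[E_{\frac{3}{2}},\theta]_m,f\rangle=\langle[E_{\frac{3}{2}},\theta]_m,f|V(4)\rangle=4\cdot\frac{\binom{2m}{m}}{4^m}\langle f,f\rangle$. Feeding this into the master identity with $d=1$ (using $(f|V(1))|W_4=f|V(4)$) and with $d=4$ (using $(f|V(4))|W_4=f$) gives in both cases $\big(-\tfrac{1}{12}-\tfrac{1}{24}\big)\cdot4\cdot\frac{\binom{2m}{m}}{4^m}\langle f,f\rangle=-\tfrac{1}{2}\cdot\frac{\binom{2m}{m}}{4^m}\langle f,f\rangle$, which is parts (1) and (3).

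Part (2) reduces to proving $\Sigma_2=\frac{2a_f(2)}{3}\,\Sigma_0$. To this end, factor the two Dirichlet series at the prime $2$: writing $j=2^a b$ with $b$ odd and using the multiplicativity of $n\mapsto a_f(n)$, both $\Sigma_0$ and $\Sigma_2$ split off the common factor $\sum_{b\ \mathrm{odd}}a_f(b^2)/b^{k}$, so it suffices to show that $\sum_{a\geq0}a_f(2^{2a+1})\,2^{-ak}=\frac{2a_f(2)}{3}\sum_{a\geq0}a_f(2^{2a})\,2^{-ak}$. From Hecke's recursion $a_f(2^{r+1})=a_f(2)a_f(2^r)-2^{k-1}a_f(2^{r-1})$ we have $\sum_{r\geq0}a_f(2^r)x^r=\big(1-a_f(2)x+2^{k-1}x^2\big)^{-1}$; extracting the even and odd parts in $x$ and setting $y=x^2=2^{-k}$, so that $2^{k-1}y=\tfrac12$, we get $\sum_{a\geq0}a_f(2^{2a+1})y^a=\frac{a_f(2)}{1+2^{k-1}y}\sum_{a\geq0}a_f(2^{2a})y^a=\frac{2a_f(2)}{3}\sum_{a\geq0}a_f(2^{2a})y^a$, which is the desired identity. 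Consequently $\langle[E_{\frac{3}{2}},\theta]_m,f|V(2)\rangle=c_m\,2^{-(m+1)}\Sigma_2=\frac{a_f(2)}{3\cdot2^{m}}\langle[E_{\frac{3}{2}},\theta]_m,f\rangle=\frac{a_f(2)}{3\cdot2^{m}}\cdot4\cdot\frac{\binom{2m}{m}}{4^m}\langle f,f\rangle$, and the master identity with $g=f|V(2)$ (where $(f|V(2))|W_4=f|V(2)$) gives $-\tfrac{1}{8}\langle[E_{\frac{3}{2}},\theta]_m,f|V(2)\rangle=-\tfrac{1}{6}\cdot\frac{a_f(2)}{2^{m}}\cdot\frac{\binom{2m}{m}}{4^m}\langle f,f\rangle$, which is part (2).

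The step I expect to be the main obstacle is this last local computation at $p=2$. In the level-$2$ and level-$4$ cases of Lemmas~\ref{ScalarThetaNewformLevel2} and~\ref{ScalarThetaNewformLevel4} the analogous step is automatic because $a_f(2)$ is pinned down by the level, whereas for a level-$1$ newform $a_f(2)$ is a genuine free parameter, so one must carry the whole $2$-Euler factor of $\sum_j a_f(2j^2)/j^{s}$ and exploit the fact that it degenerates exactly at the edge point $s=k$. An equivalent but more hands-on route, in the spirit of the proof of Lemma~\ref{VOperatorNewformLevel1}, is to use $T(2)=U(2)+2^{m}V(2)$ on $\SL_2(\Z)$ together with $T(2)f=a_f(2)f$ and the adjointness of $U(2)$ and $V(2)$ under $\langle[E_{\frac{3}{2}},\theta]_m,\cdot\rangle$, and to solve the resulting linear relation for $\langle[E_{\frac{3}{2}},\theta]_m,f|V(2)\rangle$; this avoids generating functions at the cost of the same $2$-adic bookkeeping.
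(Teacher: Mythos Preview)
Your proof is correct and follows essentially the same template as the paper's: both reduce to the master identity $\langle\pi_{\hol}([\mathcal{H},\theta]_m),g\rangle=-\tfrac{1}{12}\langle[E_{3/2},\theta]_m,g\rangle-\tfrac{1}{24}\langle[E_{3/2},\theta]_m,g|W_4\rangle$, compute $g|W_4$ for $g=f|V(d)$ using that $f$ has level $1$, and evaluate the unfolded Dirichlet series via Lemma~\ref{IntermediateUnfolding}. The only stylistic difference is in part~(2): the paper obtains $\Sigma_2=\tfrac{2a_f(2)}{3}\Sigma_0$ in three lines by applying the Hecke recursion $a_f(2n^2)=a_f(2)a_f(n^2)-2^{2m+1}a_f(n^2/2)$ directly inside the sum to get the self-referential equation $\Sigma_2=a_f(2)\Sigma_0-\tfrac{1}{2}\Sigma_2$, whereas you factor out the Euler product at $2$ and compute the ratio via the generating function $\sum_r a_f(2^r)x^r$ --- an equivalent but slightly longer route to the same identity.
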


\begin{proof}

The proofs of (1) and (3) are similar to the proof of Lemma~\ref{ScalarThetaNewformLevel4}. The proof of (2) is similar to the proof of Lemma~\ref{ScalarThetaNewformLevel2} with the following modification. 

\begin{align}
\sum\limits_{n\geq 1}\frac{1}{2^{m+1}}\frac{a_f(2n^2)}{n^{2m+2}}\notag&=\sum\limits_{n\geq 1}\frac{1}{2^{m+1}}\frac{1}{n^{2m+2}}\cdot\left[a_f(2)a_f(n^2)-2^{2m+1}a_f\left(\frac{n^2}{2}\right)\right]\\
\notag&=\frac{a_f(2)}{2^{m+1}}\sum\limits_{n\geq 1}\frac{a_f(n^2)}{n^{2m+2}}-\frac{1}{2}\sum\limits_{n\geq 1}\frac{1}{2^{m+1}}\frac{a_f(2n^2)}{n^{2m+1}}\\
\notag&=\frac{2}{3}\cdot\frac{a_f(2)}{2^{m+1}}\sum\limits_{n\geq 1}\frac{a_f(n^2)}{n^{2m+2}}.
\end{align}
\end{proof}

We now apply Lemmas~\ref{ScalarThetaNewformLevel4} through \ref{ScalarThetaNewformLevel1} and the results of Section~\ref{SectionPetersson} to determine the decomposition of $\pi_{\hol}([\mathcal{H}(\tau),\theta(\tau)]_m).$

\begin{proposition}\label{ThetaCoefficients}
Consider the decomposition
$$
\pi_{\hol}([\mathcal{H}(\tau),\theta(\tau)]_{m})=\sum c_1(f)f(\tau) + \sum c_2(f)(f|V(2))(\tau)+\sum c_4(f)(f|V(4))(\tau),
$$
where the sums go over normalized newforms of levels dividing $4$ and $c_i(f)\in\C$ for $i\in\{1,2,4\}.$ Then the following are true.

\noindent
(1) If $f\in\mathcal{S}_{2m+2}^{\text{new}}\left(\Gamma_0(4)\right),$ then 
$$
c_1(f)=-\frac{1}{3}\cdot\frac{{\binom{2m}{m}}}{4^m}.
$$

\noindent
(2) If $f\in\mathcal{S}_{2m+2}^{\text{new}}\left(\Gamma_0(2)\right),$ then 
$$
c_1(f)=-\frac{2}{3}\cdot\frac{{\binom{2m}{m}}}{4^m};\ \ \ \ \ \ \ \ \ \ \ \ \ \ \ \ \ \ \ \ \ \ \ \ \ c_2(f)=-\frac{\lambda_2(f)}{3}\cdot\frac{{{2m}\choose m}}{4^m}.
$$

\noindent
(3) If $f\in\mathcal{S}_{2m+2}\left(\SL_2(\Z)\right),$ then 
$$
c_1(f)=-\frac{{{2m}\choose m}}{4^m};\ \ \ \ \ \ \ \ \ \ \ \ c_2(f)=0;\ \ \ \ \ \ \ \ \ \ \ \ c_4(f)=-\frac{{{2m}\choose m}}{4^m}.
$$
\end{proposition}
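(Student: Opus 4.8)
The plan is to treat $P:=\pi_{\hol}\big([\mathcal H(\tau),\theta(\tau)]_m\big)$ as a completely explicit cusp form and to expand it in the standard basis of $\mathcal S_{2m+2}(\Gamma_0(4))$. Indeed, by Proposition~\ref{BracketProposition} the bracket is modular of weight $2m+2$ on $\Gamma_0(4)$, and by Theorem~\ref{HolProjTh} its holomorphic projection equals $[\mathcal H^+(\tau),\theta(\tau)]_m+\frac{1}{2}\cdot\frac{\binom{2m}{m}}{4^m}\Lambda_1(\tau;m)$, so $P\in\mathcal S_{2m+2}(\Gamma_0(4))$ by Proposition~\ref{HolProjProp}(1). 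Thus everything reduces to identifying, from this $P$, its components along $f$, $f|V(2)$, and $f|V(4)$ as $f$ runs over normalized newforms of level dividing $4$.

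To do this I would take Petersson inner products. Write $P=\sum_f P_f$, where $P_f$ is the orthogonal projection of $P$ onto the isotypic piece $W_f:=\operatorname{span}\{f|V(d):d\mid 4/N_f\}$, $N_f$ being the level of $f$. The spaces $W_f$ attached to distinct newforms are mutually orthogonal: the $V(d)$ operators commute with $T(p)$ for odd $p$, so $f|V(d)$ and $g|V(d')$ lie in different joint eigenspaces of the self-adjoint odd Hecke operators once $f\neq g$, and this is where Theorem~\ref{Mult1} and Corollary~\ref{Mult1Corollary} enter. Consequently $P_f=c_1(f)f+c_2(f)f|V(2)+c_4(f)f|V(4)$ (with $c_2(f)$, $c_4(f)$ absent whenever the corresponding form leaves level $4$), and the coefficients are the unique solution of the small linear system
$$
\big\langle P,\,f|V(d')\big\rangle=\sum_{d\mid 4/N_f}c_d(f)\,\big\langle f|V(d),\,f|V(d')\big\rangle,\qquad d'\mid 4/N_f.
$$

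Both sides of this system are already available. The left sides $\langle P,f|V(d')\rangle=\langle\pi_{\hol}([\mathcal H,\theta]_m),f|V(d')\rangle$ are exactly what Lemmas~\ref{ScalarThetaNewformLevel4}, \ref{ScalarThetaNewformLevel2}, and \ref{ScalarThetaNewformLevel1} compute, for $N_f=4,2,1$ respectively. For the Gram matrices on the right, the isometry property of the slash operator reduces each entry $\langle f|V(d),f|V(d')\rangle$ to one of the quantities $\langle f,f|V(2^l)\rangle$ evaluated in Section~\ref{SectionPetersson}: Lemma~\ref{VOperatorNewformLevel4} makes the level-$4$ Gram matrix a scalar multiple of the identity; Lemma~\ref{VOperatorNewformLevel2} makes the level-$2$ one $\langle f,f\rangle\left(\begin{smallmatrix}1&-\lambda_2(f)/2\\-\lambda_2(f)/2&1\end{smallmatrix}\right)$, invertible since $\lambda_2(f)^2=1$; and Lemma~\ref{VOperatorNewformLevel1} gives the $3\times 3$ matrix in the level-$1$ case, whose entries involve $a_f(2)/2^m$. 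Here the one external input that matters is Deligne's bound (Theorem~\ref{DeligneTheorem}): it forces $|a_f(2)|/2^m\le 2^{3/2}<3$, which is precisely what is needed to certify that this $3\times 3$ Gram matrix is nonsingular. Solving the three systems then yields $c_1(f)$, $c_2(f)$, $c_4(f)$ as in (1)--(3).

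The three linear-algebra solves are routine. The substance of the argument — already carried out in Sections~\ref{SectionBackground}--\ref{SectionPetersson} and the lemmas cited above — is the careful bookkeeping of the Fricke involution and the $U(d)$--$V(d)$ commutation relations (for instance $f|W_4=-f$ in level $4$, $f|W_4=\lambda_2(f)\,f|V(2)$ in level $2$, and $f|W_2=f|V(2)$ in level $1$), which is what both turns the brackets into the clean unfolded series of Lemmas~\ref{ScalarThetaNewformLevel4}--\ref{ScalarThetaNewformLevel1} and pins down the Gram entries. I expect that bookkeeping, rather than anything new in the present proposition, to be the main obstacle.
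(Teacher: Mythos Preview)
Your approach is exactly the paper's: take Petersson inner products of $\pi_{\hol}([\mathcal H,\theta]_m)$ against each $f|V(d')$, supply the left-hand sides from Lemmas~\ref{ScalarThetaNewformLevel4}--\ref{ScalarThetaNewformLevel1} and the Gram entries from Lemmas~\ref{VOperatorNewformLevel4}--\ref{VOperatorNewformLevel1}, and solve the resulting small linear system level by level. The paper carries this out verbatim (writing out only the level-$2$ case); your one extra remark, that Deligne's bound certifies invertibility of the level-$1$ Gram matrix, is correct but not strictly needed, since the Gram matrix of the linearly independent set $\{f,\,f|V(2),\,f|V(4)\}$ is automatically positive definite.
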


\begin{proof}
The proofs of (1), (2), and (3) are similar. Therefore, we only prove (2) for brevity. If we take inner products with $f$ and $f|V(2)$ on both sides of the decomposition, then  Lemma~\ref{ScalarThetaNewformLevel2} gives that
$$
c_1(f)\langle f,f\rangle+c_2(f)\langle f,f|V(2)\rangle=-\frac{1}{2}\cdot\frac{{{2m}\choose m}}{4^m}\langle f,f\rangle
$$ 
and 
$$
c_1(f)\langle f,f|V(2)\rangle+c_2(f)\langle f|V(2),f|V(2)\rangle=0.
$$
Lemma~\ref{VOperatorNewformLevel2} then gives that
$$
c_1(f)-\frac{\lambda_2(f)}{2}c_2(f)=-\frac{1}{2}\cdot\frac{{{2m}\choose{m}}}{4^m}
$$
and
$$
-\frac{\lambda_2(f)}{2}c_1(f)+c_2(f)=0.
$$
We obtain the claim by solving the system of equations.
\end{proof}

\begin{remark}
For our purposes, we only need the coefficients $c_1(f).$ 
\end{remark}

\subsection{Decomposition of $\pi_{\hol}([\mathcal{H}(\tau),\theta(4\tau)]_{m})$}

Here we decompose $\pi_{\hol}([\mathcal{H}(\tau),\theta(4\tau)]_m).$  Since $\theta(4\tau)$ is modular on $\Gamma_0(16),$ we use  Proposition~\ref{UnfoldingInGeneral2} instead of \ref{UnfoldingInGeneral}. In the notation of Proposition~\ref{UnfoldingInGeneral2}, we first describe the Fourier series of $g|A$ where $g(\tau)=\theta(4\tau)$ and $A\in S.$ Namely, we have the following lemma.

\begin{lemma}\label{Theta4Lemma}
If $g(\tau)=\theta(4\tau)$ and $q_{\tau}=e^{2\pi i\tau},$ then the following are true.

\noindent
(1) 
$$
\left(g\Big|\begin{pmatrix}
1 & 0 \\
4 & 1 
\end{pmatrix}\right)(\tau)=\frac{1-i}{2}\left(\sum\limits_{\substack{n\in\Z \\ 2\mid n}} q_{\tau}^{n^2}+i\sum\limits_{\substack{n\in\Z \\ 2\nmid n}}q_{\tau}^{n^2}\right).
$$

\noindent
(2) 
$$
\left(g\Big|\begin{pmatrix}
3 & -1 \\
4 & -1 
\end{pmatrix}\right)(\tau)=\frac{1+i}{2}\left(\sum\limits_{\substack{n\in\Z \\ 2\mid n}} q_{\tau}^{n^2}-i\sum\limits_{\substack{n\in\Z \\ 2\nmid n}}q_{\tau}^{n^2}\right).
$$

\noindent
(3) 
$$
\left(g\Big|\begin{pmatrix}
1 & 0 \\
8 & 1 
\end{pmatrix}\right)(\tau)=\sum\limits_{\substack{n\in\Z \\ 2\nmid n}} q_{\tau}^{n^2}.
$$
\end{lemma}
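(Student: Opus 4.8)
The plan is to compute each of the three Fourier expansions by writing $\theta(4\tau)$ as a $\theta$-series, tracking the action of the relevant matrix in $\GL_2^+(\Q)$ through the metaplectic slash operator, and then repackaging the resulting $q$-expansion as a sum over even and odd $n\in\Z$. First I would record that $\theta(4\tau) = \theta|_{1/2}V(4)$, so each $g|A$ in the lemma is a twofold slash $\theta|_{1/2}(V(4)A)$, and the product matrix $V(4)A$ can be written as (upper triangular integer matrix)$\cdot\gamma$ for some $\gamma\in\SL_2(\Z)$, up to scaling; this is just the standard coset bookkeeping for $\theta$ on $\Gamma_0(4)$.

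For part (3), the matrix $\begin{pmatrix}1&0\\8&1\end{pmatrix}$ together with $V(4)$ produces $\begin{pmatrix}4&0\\8&1\end{pmatrix} = \begin{pmatrix}1&0\\2&1\end{pmatrix}\begin{pmatrix}4&0\\0&1\end{pmatrix}$; applying the transformation law of $\theta$ under $\begin{pmatrix}1&0\\2&1\end{pmatrix}$ (which is the key classical identity $\theta\big(\frac{\tau}{2\tau+1}\big)$-type relation, giving the partial theta over odd squares via Poisson summation, i.e. $\sum_{n} q^{n^2} \mapsto$ roughly $\sum_{2\nmid n} q^{n^2}$ after unwinding the Gauss sum), I expect to land exactly on $\sum_{2\nmid n}q_\tau^{n^2}$. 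For parts (1) and (2), the matrices $\begin{pmatrix}1&0\\4&1\end{pmatrix}$ and $\begin{pmatrix}3&-1\\4&-1\end{pmatrix}$ are the cusp representatives for $\Gamma_0(16)$ lying over the cusps $1/4$ and $1/2$ respectively; after combining with $V(4)$ and reducing modulo $\SL_2(\Z)$, the automorphy factor contributes a root of unity (the $\frac{1\mp i}{2}$ prefactors, coming from $\theta$-multiplier Gauss sums of modulus $2$), and the resulting series splits as a linear combination of $\sum_{2\mid n}q_\tau^{n^2}$ and $\sum_{2\nmid n}q_\tau^{n^2}$ with coefficients $1$ and $\pm i$. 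I would verify the exact constants by comparing the leading Fourier coefficients (the constant term and the $q_\tau$-coefficient) on both sides, which pins down the normalization uniquely since the space of candidate expansions is low-dimensional.

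The main obstacle is purely computational bookkeeping rather than conceptual: correctly evaluating the half-integral-weight automorphy factor $j(\gamma,\tau) = \theta(\gamma\tau)/\theta(\tau)$ for the specific $\gamma$'s that appear, getting the branch of the square root and the eighth root of unity right, and matching them against the metaplectic multiplier $\phi(\tau)^{-1}$ attached to each $A\in S$. Concretely, I would use the explicit formula for the $\theta$-multiplier $\big(\frac{c}{d}\big)\varepsilon_d^{-1}$ on $\Gamma_0(4)$, together with the classical functional equation $\theta(-1/\tau) = \sqrt{\tau/i}\,\theta(\tau)$, to compute each Gauss sum; the sums that arise are elementary finite Gauss sums modulo $4$ or $8$, whose values are standard. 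Once the prefactors are confirmed, the decomposition of the $q$-expansion into even/odd parts is immediate from regrouping the defining sum $\theta(\tau) = \sum_{n\in\Z}q_\tau^{n^2}$ after rescaling $\tau$. The sign/phase conventions are the only place an error can creep in, so I would cross-check the three identities against the constraint that $\theta(4\tau)|W_{16} = \frac{1}{\sqrt 2}e^{-i\pi/4}\theta(\tau)$ from Lemma~\ref{FrickeProps}, which these local expansions must be consistent with.
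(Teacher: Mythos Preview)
Your approach is essentially the same as the paper's: write $g|A=\theta|_{1/2}(V(4)A)$, factor the resulting matrix into pieces on which the action of $\theta$ is known, and read off the $q$-expansion. The paper carries this out concretely for part~(1) by factoring $V(4)\begin{pmatrix}1&0\\4&1\end{pmatrix}=\begin{pmatrix}1&0\\1&1\end{pmatrix}V(4)$ and then writing $\begin{pmatrix}1&0\\1&1\end{pmatrix}=\begin{pmatrix}1&1\\0&1\end{pmatrix}\begin{pmatrix}0&-1\\1&0\end{pmatrix}\begin{pmatrix}1&1\\0&1\end{pmatrix}$, so that only the periodicity $\theta|T=\theta$ and the single functional equation $\theta|\begin{pmatrix}0&-1\\1&0\end{pmatrix}(\tau)=\frac{1}{\sqrt{2i}}\,\theta(\tau/4)$ are needed, yielding $\frac{1-i}{2}\sum_{n\in\Z}i^{n^2}q_\tau^{n^2}$ directly; parts~(2) and~(3) are declared analogous. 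Your version invokes the $\theta$-multiplier $\big(\tfrac{c}{d}\big)\varepsilon_d^{-1}$ and Gauss sums rather than this explicit $S$-$T$ factorization, but that is just an alternate bookkeeping device for the same computation, and your cross-check against Lemma~\ref{FrickeProps} is a sensible way to catch phase errors that the paper does not bother with.
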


\begin{proof}
The proofs of (1),(2), and (3) are analogous. Therefore, we only prove (1) for brevity. By definition of $V(4),$ we have that
\begin{align}
g\Big|\begin{pmatrix}
1 & 0 \\
4 & 1
\end{pmatrix}\notag&= 4^{-\frac{k}{2}}\theta|V(4)\begin{pmatrix}
1 & 0 \\
4 & 1
\end{pmatrix}\\
\notag&=4^{-\frac{k}{2}}\theta\Big|\begin{pmatrix}
1 & 0 \\
1 & 1
\end{pmatrix}V(4)\\
\notag&=4^{-\frac{k}{2}}\theta\Big|\begin{pmatrix}
1 & 1 \\
0 & 1
\end{pmatrix}\begin{pmatrix}
0 & -1 \\
1 & 0
\end{pmatrix}\begin{pmatrix}
1 & 1 \\
0 & 1
\end{pmatrix}|V(4).
\end{align}
Since $\theta$ is clearly periodic with period $1,$ we have that $\theta\Big|\begin{pmatrix}
1 & 1 \\
0 & 1
\end{pmatrix}=\theta.$  Furthermore, by the modularity properties of $\theta$ (see Corollary 2.3.21 of \cite{CohenStromberg}), we have that
$$
\left(\theta\Big\vert\begin{pmatrix}
0 & -1\\
1 & 0
\end{pmatrix}\right)(\tau)=\frac{1}{\sqrt{2i}}\theta\left(\frac{\tau}{4}\right).
$$
Putting all this together, we have
$$
\left(g\Big|\begin{pmatrix}
1 & 0 \\
4 & 1
\end{pmatrix}\right)(\tau)=\frac{1-i}{2}\sum\limits_{n\in\Z} i^{n^2}q_{\tau}^{n^2},
$$
which is exactly (1).
\end{proof}

The following three lemmas determine the inner product of $\pi_{\hol}([\mathcal{H}(\tau),\theta(4\tau)]_m)$ with elements of $\{f|V(d)\}$ where $f$ is a newform of level $N|4$ and $dN|16.$ The first lemma concerns the level $4.$

\begin{lemma}\label{ScalarTheta4NewformLevel4}
If $f(\tau)=\sum\limits_{n\geq 1}a_f(n)q_{\tau}^n \in\mathcal{S}_{2m+2}^{\text{new}}\left(\Gamma_0(4)\right)$ is a normalized newform, then the following are true.

\noindent
(1) $$
\langle \pi_{\hol}([\mathcal{H}(\tau),\theta(4\tau)]_{m}),f\rangle=-\frac{1}{6}\cdot\frac{{{2m}\choose{m}}}{4^m}\langle f,f \rangle. 
$$

\noindent
(2) $$
\langle \pi_{\hol}([\mathcal{H}(\tau),\theta(4\tau)]_{m}),f|V(2)\rangle=0.
$$

\noindent
(3) $$
\langle \pi_{\hol}([\mathcal{H}(\tau),\theta(4\tau)]_{m}),f|V(4)\rangle=\frac{1}{6}\cdot\frac{{{2m}\choose{m}}}{4^m}\langle f,f \rangle.
$$
\end{lemma}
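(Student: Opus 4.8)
The plan is to follow the template of the proof of Lemma~\ref{ScalarThetaNewformLevel4}, but at level $16,$ since $\theta(4\tau)$ is modular on $\Gamma_0(16).$ First I would apply Proposition~\ref{HolProjProp}(2) to replace $\pi_{\hol}([\mathcal{H}(\tau),\theta(4\tau)]_m)$ by $[\mathcal{H}(\tau),\theta(4\tau)]_m$ inside each Petersson pairing (all of $f,\ f|V(2),\ f|V(4)$ lie in $\mathcal{S}_{2m+2}(\Gamma_0(16))$), and then insert Zagier's decomposition $\mathcal{H}=-\tfrac{1}{12}(E_{\frac{3}{2}}+(1-i)2^{-3/2}F_{\frac{3}{2}})$ from Theorem~\ref{ZagierHMSTheorem}. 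Thus for each $g\in\{f,\ f|V(2),\ f|V(4)\}$ it remains to compute $\langle[E_{\frac{3}{2}},\theta(4\tau)]_m,g\rangle$ and $\langle[F_{\frac{3}{2}},\theta(4\tau)]_m,g\rangle.$

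For the $E_{\frac{3}{2}}$-terms I would invoke Proposition~\ref{UnfoldingInGeneral2}, which expresses $\langle g,[E_{\frac{3}{2}},\theta(4\tau)]_m\rangle$ as a weighted sum over $A\in S$ of Dirichlet series $\sum_{n\ge1}a_{g,A}(n)\overline{a_{\theta(4\cdot),A}(n)}\,n^{-m-1},$ where the expansions $\theta(4\tau)|A$ are given by Lemma~\ref{Theta4Lemma} (plus the trivial case $A=I$); a short computation gives $\sum_{A\in S}\overline{a_{\theta(4\cdot),A}(n)}=2\,\overline{a_{\theta}(n)}.$ Since each $A\in S$ lies in $\Gamma_0(4),$ we have $f|A=f,$ which disposes of the pairing with $g=f$ at once. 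For $g=f|V(2)$ and $g=f|V(4)$ I would compute $(f|V(d))|A$ by factoring the matrix $V(d)A$ through an element of $\Gamma_0(4)$ times a scaling/theta-transformation matrix; the decisive input is that the even Fourier coefficients of a level-$4$ newform vanish, which pins $a_{f|V(2)}$ to indices $n\equiv2\pmod4$ (hence orthogonal to $a_\theta,$ whose support is the squares) and makes the $E_{\frac{3}{2}}$-contributions to parts (2) and (3) collapse to $0.$

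For the $F_{\frac{3}{2}}$-terms I would use $F_{\frac{3}{2}}=2^{3/2}G_{\frac{3}{2}}|W_{16}$ (Lemma~\ref{FrickeProps}(2)), move the Fricke involution off the Eisenstein series using Proposition~\ref{BracketProposition}(2) and the isometry $\langle u,v\rangle=\langle u|W_{16},v|W_{16}\rangle,$ and simplify via $G_{\frac{3}{2}}|W_{16}^2=iG_{\frac{3}{2}}$ and $\theta(4\tau)|W_{16}=\tfrac{1}{\sqrt2}e^{-i\pi/4}\theta(\tau)$ (Lemma~\ref{FrickeProps}), which collapses each $F_{\frac{3}{2}}$-pairing into $\tfrac{1+i}{2}\langle[G_{\frac{3}{2}},\theta]_m,g|W_{16}\rangle.$ Writing $W_{16}=W_4V(4)$ and using $f|W_4=-f$ (Atkin--Lehner, as in Lemma~\ref{ScalarThetaNewformLevel4}) yields $f|W_{16}=-f|V(4),$ $(f|V(2))|W_{16}=-f|V(2),$ and $(f|V(4))|W_{16}=-f;$ I would then evaluate $\langle[G_{\frac{3}{2}},\theta]_m,\cdot\rangle$ by the $\Gamma_0(16)$-version of Proposition~\ref{UnfoldingInGeneral} for $G_{\frac{3}{2}}$ recorded in the Remark following Proposition~\ref{UnfoldingInGeneral2} (the constant being $\tfrac{1}{24}=1/[\SL_2(\Z):\Gamma_0(16)]$ in place of $\tfrac16$). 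Converting the resulting Dirichlet series $\sum_n a_f(n^2)n^{-2m-2}$ to $\langle f,f\rangle$ via Lemma~\ref{IntermediateUnfolding} (with $N=4,$ so $\prod_{p\mid N}(1-1/p)=\tfrac12$) and using $\binom{m+1/2}{m}/(2m+1)=\binom{2m}{m}/4^m,$ the $E_{\frac{3}{2}}$ and $F_{\frac{3}{2}}$ pieces should combine to $(-\tfrac13+\tfrac16)\tfrac{\binom{2m}{m}}{4^m}\langle f,f\rangle$ for (1), $0$ for (2), and $(0+\tfrac16)\tfrac{\binom{2m}{m}}{4^m}\langle f,f\rangle$ for (3).

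The main obstacle is the cusp bookkeeping in the $E_{\frac{3}{2}}$-terms. Unlike the level-$4$ computation in Lemma~\ref{ScalarThetaNewformLevel4}, one cannot absorb the coset matrices $A\in S$ (or the Fricke matrices) back into $\Gamma_0(4)$; the needed expansions $(f|V(d))|A$ are genuinely the expansions of $f$ at the various non-$\infty$ cusps of $\Gamma_0(16)$ (with $q_\tau$-shifts by fourths and the attendant roots of unity), and arranging these so that the cancellations forcing the zeros in (2) and (3) are visible --- while tracking the metaplectic automorphy factors and the $d^{k/2}$ normalizations of the $V(d)$ operators throughout --- is where essentially all of the effort goes; everything else is the same Rankin--Selberg manipulation already used in Section~\ref{SectionPetersson}.
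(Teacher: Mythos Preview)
Your outline is correct and matches the paper's approach essentially step for step: split $\mathcal{H}$ via Zagier, handle the $E_{3/2}$-piece with Proposition~\ref{UnfoldingInGeneral2} and Lemma~\ref{Theta4Lemma}, handle the $F_{3/2}$-piece by the Fricke trick $F_{3/2}=2^{3/2}G_{3/2}|W_{16}$ together with the $\Gamma_0(16)$-version of the unfolding, and then reduce to $\sum_n a_f(n^2)n^{-2m-2}$ via Lemma~\ref{IntermediateUnfolding}. The one ingredient the paper makes explicit that you only gesture at is that the cusp bookkeeping for $f|V(2)$ (and analogously $f|V(4)$) hinges on $f\big|\left(\begin{smallmatrix}1&0\\2&1\end{smallmatrix}\right)=-f$, which the paper derives from multiplicity one (Theorem~\ref{Mult1}) via the normalizer argument; your ``even Fourier coefficients vanish'' is a consequence of this but is not by itself enough to pin down $(f|V(d))|A$ for the non-identity cosets.
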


\begin{proof}

As in the proof of Lemma~\ref{ScalarThetaNewformLevel4}, we have
\begin{align}
\langle f,\pi_{\hol}([\mathcal{H}(\tau),\theta(4\tau)]_m)\rangle\notag&=\langle f, [\mathcal{H}(\tau),\theta(4\tau)]_m\rangle \\
\notag&=-\frac{1}{12}\langle f,[E_{\frac{3}{2}}(\tau),\theta(4\tau)]_m\rangle -\frac{1}{12}\langle f, (1-i)2^{-3/2} [F_{\frac{3}{2}}(\tau),\theta(4\tau)]_m\rangle.
\end{align}
Proposition~\ref{UnfoldingInGeneral2} shows that
$$
\langle f, [E_{\frac{3}{2}}(\tau),\theta(4\tau)]_m \rangle=\frac{1}{24}{{m-\frac{1}{2}}\choose m}\frac{(2m)!}{(4\pi)^{2m+1}}\sum\limits_{A\in S}\sum\limits_{n\geq 1}\frac{a_{f,A}(n)\overline{a_{\theta(4\tau),A}(n)}}{n^{m+1}}.
$$
Since $f$ is modular on $\Gamma_0(4),$ $a_{f,A}(n)=a_f(n)$ for all $A\in S.$ Therefore, Lemma~\ref{Theta4Lemma} gives that
$$
\langle f,[E_{\frac{3}{2}}(\tau),\theta(4\tau)]_m\rangle=\frac{1}{6}{{m+\frac{1}{2}}\choose m}\cdot\frac{(2m)!}{(4\pi)^{2m+1}}\sum\limits_{n\geq 1}\frac{a_f(n^2)}{n^{2m+2}}.
$$
An application of Lemma~\ref{IntermediateUnfolding} then shows that
$$
\frac{-1}{12}\langle f,[E_{\frac{3}{2}}(\tau),\theta(4\tau)]_m\rangle=-\frac{1}{3}\cdot\frac{{{2m}\choose m}}{4^m}\langle f,f\rangle.
$$
To finish the proof of (1), it remains to compute $\langle f,[F_{\frac{3}{2}}(\tau),\theta(4\tau)]_m\rangle$. To this end, part (2) of Lemma~\ref{FrickeProps} gives
$$
\langle f,[F_{\frac{3}{2}}(\tau),\theta(4\tau)]_m \rangle=\langle f|W_{16}, 2^{3/2}[G_{\frac{3}{2}},\theta(4\tau)|W_{16}]_m\rangle. 
$$
Applying part (2) of Lemma~\ref{FrickeProps}, we have
$$
\langle f,[F_{\frac{3}{2}}(\tau),\theta(4\tau)]_m\rangle=\frac{1}{\sqrt{2}}e^{i\frac{\pi}{4}}\langle f|W_{16},[G_{\frac{3}{2}}(\tau),\theta(\tau)]_m\rangle.
$$
Since $f$ is a newform of level $4,$ we have that  $f|W_{16}=f|W_{4}|V(4)=-f|V(4).$  An application of Proposition~\ref{UnfoldingInGeneral} with $G_{\frac{3}{2}}$ and $\Gamma_0(16)$ in place of $E_{\frac{3}{2}}$ and $\Gamma_0(4)$ shows that
\begin{align}
\langle f|W_{16},[G_{\frac{3}{2}},\theta]_m\rangle\notag&=-\frac{1}{12}{{m+\frac{1}{2}}\choose m}\cdot\frac{(2m)!}{(4\pi)^{2m+1}}\sum\limits_{n\geq 1}\frac{a_f(n^2)}{n^{2m+2}}.
\end{align}
Putting all this together, another application of Lemma~\ref{IntermediateUnfolding} gives (1).

We sketch the proof of (2). Since $f|V(2)\in\mathcal{S}_{2m+2}\left(\Gamma_0(8)\right),$ the Fourier series of $f|V(2)\Big|\begin{pmatrix}
1 & 0 \\
8 & 1
\end{pmatrix}$ is equal to the Fourier series of $f|V(2).$  On the other hand, we have
$$
f|V(2)\Big|\begin{pmatrix}
1 & 0\\
4 & 1
\end{pmatrix}=f\Big|\begin{pmatrix}
1 & 0 \\
2 & 1
\end{pmatrix}|V(2).
$$
It is easy to verify that $\begin{pmatrix}
1 & 0 \\
2 & 1
\end{pmatrix}$ normalizes $\Gamma_0(4)$ and $\Gamma_p(4)$ for all odd primes $p$.  Therefore, Theorem~\ref{Mult1} implies that $f\Big|\begin{pmatrix}
1 & 0 \\
2 & 1
\end{pmatrix}=\pm f.$ Since $f$ is a newform on $\Gamma_0(4)$ and $\left|\Gamma_0(4)/\Gamma_0(2)\right|=2,$ we have that $f\Big|\begin{pmatrix}
1 & 0 \\
2 & 1
\end{pmatrix}=-f.$ Furthermore, we have that
$$
f|V(2)\Big|\begin{pmatrix}
3 & -1 \\
4 & -1
\end{pmatrix}=f\Big|\begin{pmatrix}
7 & -2 \\
4 & -1
\end{pmatrix}|V(2)\Big|\begin{pmatrix}
1 & 0 \\
4 & 1
\end{pmatrix}=f|V(2)\Big|\begin{pmatrix}
1 & 0 \\
4 & 1
\end{pmatrix}.
$$
The computation of $\langle f|V(2),[E_{\frac{3}{2}}(\tau),\theta(4\tau)]_m\rangle$ now follows exactly as in the proof of (1). To evaluate $\langle f|V(2), [F_{\frac{3}{2}}(\tau),\theta(4\tau)]_m\rangle,$ we must evaluate $f|V(2)|W_{16}.$ A direct computation gives that
$$
f|V(2)|W_{16}=f\Big|\begin{pmatrix}
2 & 0 \\
0 & 2
\end{pmatrix}|W_4|V(2)=-f|V(2).
$$
The remainder of the proof is analogous to the proof of (1). The proof of (3) is similar.
\end{proof}

If $f$ is of level $2,$ we have the following lemma.

\begin{lemma}\label{ScalarTheta4NewformLevel2}
If $f(\tau)=\sum\limits_{n\geq 1}a_f(n)q_{\tau}^n \in\mathcal{S}_{2m+2}^{\text{new}}\left(\Gamma_0(2)\right)$ is a normalized newform, then the following are true.

\noindent
(1) $$
\langle \pi_{\hol}([\mathcal{H}(\tau),\theta(4\tau)]_{m}),f\rangle=-\frac{1}{4}\cdot\frac{{{2m}\choose{m}}}{4^m}\langle f,f \rangle. 
$$

\noindent
(2) $$
\langle \pi_{\hol}([\mathcal{H}(\tau),\theta(4\tau)]_{m}),f|V(2)\rangle=0. 
$$

\noindent
(3) $$
\langle \pi_{\hol}([\mathcal{H}(\tau),\theta(4\tau)]_{m}),f|V(4)\rangle=0. 
$$

\noindent
(4) $$
\langle \pi_{\hol}([\mathcal{H}(\tau),\theta(4\tau)]_{m}),f|V(8)\rangle=-\frac{\lambda_2(f)}{4}\cdot\frac{{{2m}\choose{m}}}{4^m}\langle f,f \rangle. 
$$
\end{lemma}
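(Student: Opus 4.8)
The plan is to follow the template already established in the proofs of Lemmas~\ref{ScalarThetaNewformLevel2} and \ref{ScalarTheta4NewformLevel4}, adapting it to a newform $f$ of level $2$ paired against $\theta(4\tau)$, which is modular on $\Gamma_0(16)$. First I would use Proposition~\ref{HolProjProp} to replace $\pi_{\hol}([\mathcal{H},\theta(4\tau)]_m)$ by $[\mathcal{H},\theta(4\tau)]_m$ inside the inner product, then expand $\mathcal{H}$ via Theorem~\ref{ZagierHMSTheorem} as a combination of $E_{\frac{3}{2}}$ and $F_{\frac{3}{2}}=2^{3/2}G_{\frac{3}{2}}|W_{16}$. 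This splits each of the four inner products into an $E_{\frac{3}{2}}$-piece and an $F_{\frac{3}{2}}$-piece. For the $E_{\frac{3}{2}}$-piece I apply Proposition~\ref{UnfoldingInGeneral2}, which produces a sum over the four coset representatives $A\in S$ of $\sum_n a_{g,A}(n)\overline{a_{h,A}(n)}/n^{m+1}$; for the $F_{\frac{3}{2}}$-piece I first move $W_{16}$ across using Proposition~\ref{BracketProposition}(2) together with Lemma~\ref{FrickeProps}(1),(2) (so that $\theta(4\tau)|W_{16}=\tfrac{1}{\sqrt2}e^{-i\pi/4}\theta(\tau)$ and $G_{\frac{3}{2}}|(W_{16}^2)=iG_{\frac{3}{2}}$), reducing it to an inner product against $\theta(\tau)$ to which the remark after Proposition~\ref{UnfoldingInGeneral2} applies.

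The essential computational input is the Fourier expansion of $(f|V(2^j)A)$ for $A\in S$ and the relevant $j$, which is obtained exactly as in the proof of Lemma~\ref{ScalarTheta4NewformLevel4}(2): writing each $A\in S$ as $\gamma\, V(2^a)$ with $\gamma$ normalizing $\Gamma_0(2)$ (so $f|\gamma=\pm f$, with the sign pinned down using that $f$ is a \emph{newform} of level $2$), together with the multiplicativity $a_f(2^c n)=a_f(2)^c a_f(n)$ and the Fricke relation $\lambda_2(f)=-2^{1-\frac{k}{2}}a_f(2)$ from Lemma~\ref{VOperatorNewformLevel2}. This lets me reduce every surviving sum to $\sum_{n\ge1} a_f(n^2)/n^{2m+2}$, which by Lemma~\ref{IntermediateUnfolding} equals a known multiple of $\langle f,f\rangle$. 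Assembling: the computation of $\langle\pi_{\hol}([\mathcal{H},\theta(4\tau)]_m),f\rangle$ proceeds verbatim as in Lemma~\ref{ScalarTheta4NewformLevel4}(1) except that the $F_{\frac{3}{2}}$-contribution now involves $f|W_{16}=f|W_2|V(8)=\lambda_2(f)\,f|V(8)$ instead of $-f|V(4)$, producing a factor $\lambda_2(f)\,a_f(2)/2^{m}\sim -1$ that combines with the $E_{\frac{3}{2}}$-contribution; tracking the constant yields $-\tfrac14\binom{2m}{m}4^{-m}\langle f,f\rangle$. For (2) and (3), the inner products with $f|V(2)$ and $f|V(4)$ vanish because the relevant twisted $L$-type sums $\sum_n a_{f|V(2^j),A}(n)\overline{a_{\theta(4\tau),A}(n)}/n^{m+1}$ collapse: the odd-square support of the $A\in S$ components of $\theta(4\tau)$ forces indices of the form $2^j n^2$ into the coefficients $a_{f|V(2^j)}$, which picks out $a_f(n^2)$, and the four $A$-terms then cancel in pairs — this is the same cancellation mechanism as in Lemma~\ref{ScalarTheta4NewformLevel4}(2). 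For (4) the pairing with $f|V(8)$ mirrors (1): one computes $f|V(8)|W_{16}=f|\begin{pmatrix}2&0\\0&2\end{pmatrix}|W_2|V(2)=\lambda_2(f)\,f|V(2)$ (using $f|W_2=\lambda_2(f)f$), and the surviving sums again reduce via Lemma~\ref{IntermediateUnfolding} to give $-\tfrac{\lambda_2(f)}{4}\binom{2m}{m}4^{-m}\langle f,f\rangle$.

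The main obstacle I expect is the bookkeeping in the $F_{\frac{3}{2}}$-piece: correctly tracking the metaplectic automorphy factors and the scalars $e^{\pm i\pi/4}$, $\tfrac{1}{\sqrt2}$, $2^{3/2}$ from Lemma~\ref{FrickeProps} as $W_{16}$ is commuted past the Rankin--Cohen bracket, and simultaneously keeping straight which coset representative $A\in S$ each piece of $\theta(4\tau)$ (and of $f|V(2^j)$) corresponds to after the Fricke involution. A secondary subtlety is justifying the sign $f|\gamma=-f$ (rather than $+f$) for the particular $\gamma$'s that arise, which one argues from $|\Gamma_0(2)/\Gamma_0(4)|$-type index considerations and the newform hypothesis exactly as in the parenthetical argument in Lemma~\ref{ScalarTheta4NewformLevel4}(2). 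Once these normalizations are fixed, the remaining manipulations are the same elementary Dirichlet-series rearrangements already carried out in the preceding lemmas, so I would present (1) in full detail and note that (2), (3), (4) follow by the identical method with the indicated modifications.
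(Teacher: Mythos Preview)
Your approach is essentially the same as the paper's: split $\mathcal{H}$ into the $E_{3/2}$- and $F_{3/2}$-pieces, unfold the $E_{3/2}$-piece via Proposition~\ref{UnfoldingInGeneral2} over the coset set $S$, handle the $F_{3/2}$-piece by commuting $W_{16}$ through the bracket using Lemma~\ref{FrickeProps}, and reduce everything to $\sum_n a_f(n^2)/n^{2m+2}$ and Lemma~\ref{IntermediateUnfolding}. The paper carries this out in exactly this way, doing (1)--(3) by direct analogy with Lemma~\ref{ScalarTheta4NewformLevel4} and then, for (4), explicitly writing down the Fourier expansions of $f|V(8)$ at the cusps corresponding to $A\in S$ (using factorizations through $W_2$ and translations).

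One small slip to flag: your claimed factorization $f|V(8)|W_{16}=f|\bigl(\begin{smallmatrix}2&0\\0&2\end{smallmatrix}\bigr)|W_2|V(2)$ is not correct as a matrix identity. In fact $V(8)W_{16}=\bigl(\begin{smallmatrix}0&-8\\16&0\end{smallmatrix}\bigr)=8\,W_2$, so $f|V(8)|W_{16}=\lambda_2(f)\,f$, not $\lambda_2(f)\,f|V(2)$. This does not affect your overall strategy, only the arithmetic in the $F_{3/2}$-contribution for part (4); once corrected, the computation indeed closes up to $-\tfrac{\lambda_2(f)}{4}\binom{2m}{m}4^{-m}\langle f,f\rangle$ as stated.
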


\begin{proof}

The proofs of (1), (2) and (3) are the analogous to the proof of Lemma~\ref{ScalarTheta4NewformLevel4}.

The proof of (4) is similar, but the Fourier series of $f|V(8)$ at cusps is more involved. The Fourier series at $\frac{1}{8}$ is computed as follows.

\begin{align}
\left(f|V(8)\Big|\begin{pmatrix}
1 & 8 \\
0 & 1
\end{pmatrix}\right)(\tau)\notag&=\left(f|W_2\cdot V(4)\begin{pmatrix}
1 & 0 \\
-8 & 1 
\end{pmatrix}\begin{pmatrix}
1 & \frac{1}{8} \\
0 & 1
\end{pmatrix}\right)(\tau)\\
\notag&=\lambda_2(f) \left(f|V(4)\Big|\begin{pmatrix}
1 & \frac{1}{8} \\
0 & 1
\end{pmatrix}\right)(\tau)\\
\notag&= \lambda_2(f)\cdot 4^{m+1}\sum\limits_{n\geq 1} a_f(n)(-1)^n q_\tau^{4n}.
\end{align}
Similarly, we have that
\begin{align}
\left(f|V(8)\Big|\begin{pmatrix}
1 & 0 \\
4 & 1
\end{pmatrix}\right)(\tau)\notag&=\left(f\Big|\begin{pmatrix}
2 & 0 \\
0 & 2
\end{pmatrix} W_2 \begin{pmatrix}
1 & 0 \\
-4 & 1 
\end{pmatrix}\begin{pmatrix}
1 & \frac{1}{4} \\
0 & 1
\end{pmatrix}\right)(\tau)\\ 
\notag&=\lambda_2(f)\sum\limits_{n\geq 1} a_f(n)\cdot i^n q_{\tau}^n.
\end{align}
and
$$
\left(f|V(8)\Big|\begin{pmatrix}
3 & -1 \\
4 & -1
\end{pmatrix}\right)(\tau)=\lambda_2(f)\sum\limits_{n\geq 1} a_f(n)\cdot (-i)^n q_{\tau}^n.
$$
\end{proof}

If $f$ is of level $1,$ we have the following lemma.

\begin{lemma}\label{ScalarTheta4NewformLevel1}
If $f(\tau)=\sum\limits_{n\geq 1}a_f(n)q_{\tau}^n \in\mathcal{S}_{2m+2}(\SL_2(\Z))$ is a normalized newform, then the following are true.

\noindent
(1) If $l\in\{0,2,4\},$ then
$$
\langle \pi_{\hol}([\mathcal{H}(\tau),\theta(4\tau)]_{m}),f|V(2^l)\rangle=-\frac{1}{4}\cdot\frac{{{2m}\choose{m}}}{4^m}\langle f,f \rangle.
$$

\noindent
(2) If $l\in\{1,3\},$ then
$$
\langle \pi_{\hol}([\mathcal{H}(\tau),\theta(4\tau)]_{m}),f|V(2^l)\rangle=-\frac{1}{6}\cdot\frac{a_f(2)}{2^{m+1}}\frac{{{2m}\choose{m}}}{4^m}\langle f,f \rangle. 
$$
\end{lemma}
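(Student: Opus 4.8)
The plan is to mirror the proofs of Lemmas~\ref{ScalarTheta4NewformLevel4} and \ref{ScalarTheta4NewformLevel2}, now in the level-$1$ setting. First I would invoke Proposition~\ref{HolProjProp} to replace $\pi_{\hol}([\mathcal{H}(\tau),\theta(4\tau)]_m)$ by $[\mathcal{H}(\tau),\theta(4\tau)]_m$ inside the inner product against $f|V(2^l)$, and then substitute Zagier's identity $\mathcal{H}=-\tfrac{1}{12}\bigl(E_{\frac32}+(1-i)2^{-3/2}F_{\frac32}\bigr)$ from Theorem~\ref{ZagierHMSTheorem}, splitting the quantity into an $E_{\frac32}$-contribution and an $F_{\frac32}$-contribution.

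For the $E_{\frac32}$-contribution, since $\theta(4\tau)$ and $f|V(2^l)$ both lie on $\Gamma_0(16)$, I would apply Proposition~\ref{UnfoldingInGeneral2}; this reduces everything to the Fourier expansions of $(f|V(2^l))|A$ and $(\theta(4\tau))|A$ over the four matrices $A\in S$. The latter expansions are exactly Lemma~\ref{Theta4Lemma}, and the former can be obtained by commuting $V(2^l)$ past $A$ and using the periodicity and (full) $\SL_2(\Z)$-modularity of $f$, in the same style as the computation of $f|V(8)$ at the cusps in the proof of part (4) of Lemma~\ref{ScalarTheta4NewformLevel2}. Because $\theta$ is supported on squares, the resulting Dirichlet series collapses, after matching the even/odd-$n$ constraints from Lemma~\ref{Theta4Lemma}, to a sum of the shape $\sum_{n\ge1}a_f(2^{a}n^2)/n^{2m+2}$ with $a$ small and depending on $l$.

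For the $F_{\frac32}$-contribution, I would use Lemma~\ref{FrickeProps} (namely $F_{\frac32}=2^{3/2}G_{\frac32}|W_{16}$ together with the $W_{16}$-transformations of $\theta$) and the fact that the slash operator is an isometry to rewrite the inner product in terms of $\langle (f|V(2^l))|W_{16},[G_{\frac32},\theta]_m\rangle$. Since $f$ has level $1$, a one-line matrix identity gives $(f|V(2^l))|W_{16}=f|V(2^{4-l})$, after which the Remark following Proposition~\ref{UnfoldingInGeneral2} applies and again produces a series $\sum_{n\ge1}a_f(2^{b}n^2)/n^{2m+2}$. The $l\leftrightarrow 4-l$ symmetry between the $E_{\frac32}$- and $F_{\frac32}$-terms is what forces the value to be the same for $l\in\{0,2,4\}$ and, separately, for $l\in\{1,3\}$.

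Finally, in both contributions I would use the Hecke recurrence at $2$ for a level-$1$ newform of weight $2m+2$, i.e. $a_f(2^{j+1}n)=a_f(2)a_f(2^jn)-2^{2m+1}a_f(2^{j-1}n)$ for odd $n$ (the same relations driving Lemma~\ref{VOperatorNewformLevel1}), to rewrite each $\sum_n a_f(2^{a}n^2)/n^{2m+2}$ as an explicit rational (respectively $a_f(2)$-linear) multiple of $\sum_n a_f(n^2)/n^{2m+2}$, and then apply Lemma~\ref{IntermediateUnfolding} to turn the latter into a multiple of $\langle f,f\rangle$; the identity $\binom{m+1/2}{m}/(2m+1)=\binom{2m}{m}/4^m$ supplies the normalization $\binom{2m}{m}/4^m$. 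I expect the main obstacle to be purely bookkeeping: correctly tracking the phases $i^{n^2}$ and the even/odd-$n$ splittings from Lemma~\ref{Theta4Lemma} for all five values of $l$ simultaneously, together with the $2$-adic Hecke recurrences, since a slip there would corrupt the final constants $-\tfrac14$ and $-\tfrac16\cdot\tfrac{a_f(2)}{2^{m+1}}$; no conceptual difficulty should appear, as this is a finite explicit computation entirely parallel to the preceding lemmas.
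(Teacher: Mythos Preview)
Your proposal is correct and follows essentially the same approach as the paper: the paper's proof simply says that it ``follows similar lines to the proof of Lemmas~\ref{ScalarTheta4NewformLevel4} and~\ref{ScalarTheta4NewformLevel2}'' with the single new ingredient being the Hecke identity $\sum_{n\ge 1}2^{-(m+1)}a_f(2n^2)n^{-(2m+2)}=\tfrac{2}{3}\cdot\tfrac{a_f(2)}{2^{m+1}}\sum_{n\ge 1}a_f(n^2)n^{-(2m+2)}$ already obtained in the proof of Lemma~\ref{ScalarThetaNewformLevel1}, which is exactly the level-$1$ Hecke recurrence you single out. Your observation that $(f|V(2^l))|W_{16}=f|V(2^{4-l})$ for level~$1$, giving the $l\leftrightarrow 4-l$ symmetry, is a clean way to organize the five cases and is implicit in (but not highlighted by) the paper's argument.
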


\begin{proof}
The proof follows similar lines to the proof of Lemmas~\ref{ScalarTheta4NewformLevel4} and ~\ref{ScalarTheta4NewformLevel2}. The only modification is
$$
\sum\limits_{n\geq 1}\frac{1}{2^{m+1}}\cdot\frac{a_f(2n^2)}{n^{2m+2}}=\frac{2}{3}\cdot\frac{a_f(2)}{2^{m+1}}\sum\limits_{n\geq 1}\frac{a_f(n^2)}{n^{2m+2}},
$$
which was shown in the proof of Lemma~\ref{ScalarThetaNewformLevel1}.
\end{proof}

The previous results yield a partial decomposition of $\pi_{\hol}([\mathcal{H}(\tau),\theta(4\tau)]_m).$ 

\begin{proposition}\label{ThetaCoefficients4}
Consider the decomposition
\begin{align}
\pi_{\hol}([\mathcal{H}(\tau),\theta(4\tau)]_{{m}})\notag&=\sum c_1(f)f(\tau) + \sum c_2(f)(f|V(2))(\tau)+\sum c_4(f)(f|V(4))(\tau)
\\
\notag&+\sum c_8(f)(f|V(8))(\tau)+\sum c_{16}(f)(f|V(16))(\tau),
\end{align}
where the sums go over newforms of levels dividing $16$ and $c_i(f)\in\C$ for $i\in\{1,2,4,8,16\}.$  The following are true.

\noindent
(1) If $f\in\mathcal{S}_{2{m}+2}^{\text{new}}\left(\Gamma_0(4)\right),$ then we have
$$
c_1(f)=-\frac{1}{6}\cdot\frac{{{2{m}}\choose{{m}}}}{4^{m}}; \qquad  \qquad  \quad c_2(f)=0; \qquad \qquad \quad c_4(f)=\frac{1}{6}\cdot\frac{{{2{m}}\choose{{m}}}}{4^{m}}.
$$

\noindent
(2) If $f\in\mathcal{S}_{2{m}+2}^{\text{new}}\left(\Gamma_0(2)\right),$ then we have 
$$
c_1(f)=-\frac{1}{3}\cdot\frac{{{2{m}}\choose{{m}}}}{4^{m}}; \qquad  c_2(f)=-\frac{\lambda_2(f)}{6}\cdot\frac{{{2{m}}\choose{{m}}}}{4^{m}}; \qquad 
c_4(f)=-\frac{1}{6}\cdot\frac{{{2{m}}\choose{{m}}}}{4^{m}}.
$$

\noindent
(3) If $f(\tau)=\sum\limits_{n\geq 1}a_f(n)q_{\tau}^n \in\mathcal{S}_{2{m}+2}(\SL_2(\Z)),$ then we have
$$
c_1(f)=-\frac{1}{2}\cdot\frac{{{2{m}}\choose{{m}}}}{4^{m}}; \qquad 
c_2(f)=\frac{1}{2}\cdot\frac{a_f(2)}{2^{{m}+1}}\frac{{{2{m}}\choose{{m}}}}{4^{m}}; \qquad c_4(f)=-\frac{3}{4}\cdot\frac{{{2{m}}\choose{{m}}}}{4^{m}}.
$$
\end{proposition}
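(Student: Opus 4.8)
The plan is to run the same argument used to prove Proposition~\ref{ThetaCoefficients}, now inside $\mathcal{S}_{2m+2}(\Gamma_0(16))$ and with the inner products supplied by Lemmas~\ref{ScalarTheta4NewformLevel4}, \ref{ScalarTheta4NewformLevel2}, and \ref{ScalarTheta4NewformLevel1} in place of Lemmas~\ref{ScalarThetaNewformLevel4}--\ref{ScalarThetaNewformLevel1}. First I would note that the displayed decomposition is legitimate: by Atkin--Lehner theory the vectors $g|V(d)$, with $g$ a normalized newform of level $N\mid 16$ and $d\mid 16/N$, form a basis of $\mathcal{S}_{2m+2}(\Gamma_0(16))$, and $\pi_{\hol}([\mathcal{H}(\tau),\theta(4\tau)]_m)$ belongs to this space. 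Indeed it is holomorphic of weight $2m+2$ on $\Gamma_0(16)$ by Proposition~\ref{HolProjProp}, and it is cuspidal because, for $m\geq 1$, the derivatives in the Rankin--Cohen bracket force $[\mathcal{H}(\tau),\theta(4\tau)]_m$ to vanish at every cusp; one also sees this directly from Mertens's formula in Theorem~\ref{HolProjTh}, whose right-hand side is a holomorphic $q$-series with vanishing constant term.

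Next I would isolate a single newform. Since the newforms here have trivial nebentypus, a direct application of Theorem~\ref{Rankin1} together with Corollary~\ref{Mult1Corollary} and the Hecke relations shows that $\langle f|V(d), g|V(e)\rangle = 0$ whenever $f$ and $g$ are inequivalent newforms. Thus, fixing a newform $f$ of level $N\in\{1,2,4\}$ and pairing the decomposition with $f|V(2^i)$ for each $2^i\mid 16/N$, only the terms $c_{2^j}(f)\langle f|V(2^j), f|V(2^i)\rangle$ with $2^j\mid 16/N$ survive on the left-hand side. This produces a square linear system for the $c_{2^j}(f)$ whose right-hand sides are exactly the inner products computed in Lemmas~\ref{ScalarTheta4NewformLevel4}, \ref{ScalarTheta4NewformLevel2}, and \ref{ScalarTheta4NewformLevel1}.

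To build the coefficient matrix I would observe, as in the proofs of Lemmas~\ref{VOperatorNewformLevel4}--\ref{VOperatorNewformLevel1}, that Theorem~\ref{Rankin1} gives $\langle f|V(2^i), f|V(2^j)\rangle = \langle f, f|V(2^{|i-j|})\rangle$; these entries are then read off from Lemma~\ref{VOperatorNewformLevel4} in level $4$ (all vanish for positive exponent, so the Gram matrix is $\langle f,f\rangle$ times the identity), from Lemma~\ref{VOperatorNewformLevel2} in level $2$ (they equal $(-\lambda_2(f)/2)^{|i-j|}\langle f,f\rangle$), and from Lemma~\ref{VOperatorNewformLevel1} in level $1$ (the explicit polynomials in $a_f(2)/2^{m}$ recorded there, with $k=2m+2$). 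In level $4$ the system is diagonal and $c_1(f), c_2(f), c_4(f)$ are immediate from Lemma~\ref{ScalarTheta4NewformLevel4}. In levels $2$ and $1$ one inverts the resulting $4\times 4$ and $5\times 5$ Gram matrices and extracts the three entries $c_1(f), c_2(f), c_4(f)$ claimed in the proposition.

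The bulk of the work, and the main obstacle, is the linear algebra in the level-$1$ case: the Gram matrix entries are increasingly complicated polynomials in $a_f(2)/2^m$, and after substituting the right-hand sides from Lemma~\ref{ScalarTheta4NewformLevel1} one must verify that the solution collapses to the stated closed forms, the cancellations being of the same flavour as the identity $\sum_{n\geq 1} 2^{-m-1} a_f(2n^2) n^{-2m-2} = \tfrac{2}{3}\cdot 2^{-m-1} a_f(2)\sum_{n\geq 1} a_f(n^2) n^{-2m-2}$ already used in the proof of Lemma~\ref{ScalarThetaNewformLevel1}. This is mechanical once the inputs are in place; I would present one representative level in detail, as in the proof of Proposition~\ref{ThetaCoefficients}, and leave the routine verification of the other two to the reader.
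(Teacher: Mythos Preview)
Your proposal is correct and is precisely the argument the paper intends: its proof reads in full ``The proof is analogous to the proof of Proposition~\ref{ThetaCoefficients},'' and you have spelled out that analogy, pairing the decomposition with each $f|V(2^i)$, computing the Gram matrix entries $\langle f|V(2^i),f|V(2^j)\rangle=\langle f,f|V(2^{|i-j|})\rangle$ from Lemmas~\ref{VOperatorNewformLevel4}--\ref{VOperatorNewformLevel1}, and solving the resulting linear system with right-hand sides from Lemmas~\ref{ScalarTheta4NewformLevel4}--\ref{ScalarTheta4NewformLevel1}. The extra justification you supply (cuspidality of the holomorphic projection, orthogonality between distinct newforms via Theorem~\ref{Rankin1} and Corollary~\ref{Mult1Corollary}) is welcome and does not depart from the paper's method.
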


\begin{proof}
The proof is analogous to the proof of Proposition~\ref{ThetaCoefficients}.
\end{proof}

\subsection{Bounds on the Fourier coefficients}

Here we apply Deligne's bound to the decompositions above.
 
\begin{lemma}\label{BoundCoefficients}
If $p\geq 5$ is a prime and ${m}\in\N$, then the following are true.

\noindent
(1) If
$
\pi_{\hol}([\mathcal{H}(\tau),\theta(\tau)]_{m})=\sum\limits_{n\geq 1} a_m(n)q_{\tau}^n,
$
then
$$
|a_m(p)|\leq\frac{2}{3}\cdot\frac{{{2{m}}\choose {m}}}{4^{m}}\cdot({m}-1)\cdot p^{{m}+1/2}.
$$

\noindent
(2) If $
\pi_{\hol}([\mathcal{H}(\tau),\theta(4\tau)]_{m})=\sum\limits_{n\geq 1}b_m(n)q_\tau^n,
$
then
$$
|b_m(4p)|\leq \frac{4}{3}({m}-1)\cdot{{2{m}}\choose {m}}p^{{m}+1/2}.
$$
\end{lemma}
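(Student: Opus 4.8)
The plan is to read off the coefficient $a_m(p)$ (resp. $b_m(4p)$) from the newform decomposition in Proposition~\ref{ThetaCoefficients} (resp. Proposition~\ref{ThetaCoefficients4}), discard the pieces that cannot contribute because $p$ is odd, apply Deligne's bound (Theorem~\ref{DeligneTheorem}) to the surviving newform coefficients, and sum over newforms using the explicit constants $c_i(f)$ together with the identity $\dim\mathcal{S}_{2m+2}(\Gamma_0(4))=m-1$ and the fact that a newform of level $N\mid 4$ contributes exactly $d(4/N)$ oldforms to $\mathcal{S}_{2m+2}(\Gamma_0(4))$ (here $d$ is the divisor function).

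For part (1): since $p$ is odd, the coefficient of $q_\tau^p$ in $f|V(2)$ and $f|V(4)$ vanishes, so $a_m(p)=\sum_f c_1(f)a_f(p)$, and $|a_f(p)|\le 2p^{m+1/2}$. The point is that $|c_1(f)|$ depends only on the level $N$ of $f$ and, by inspection of the three cases of Proposition~\ref{ThetaCoefficients}, equals $\frac{1}{3}\cdot\frac{\binom{2m}{m}}{4^m}\cdot d(4/N)$. Hence
$$
\sum_f |c_1(f)|=\frac{1}{3}\cdot\frac{\binom{2m}{m}}{4^m}\sum_{N\mid 4} d(4/N)\cdot\#\{\text{newforms of level }N\}=\frac{1}{3}\cdot\frac{\binom{2m}{m}}{4^m}\dim\mathcal{S}_{2m+2}(\Gamma_0(4))=\frac{m-1}{3}\cdot\frac{\binom{2m}{m}}{4^m},
$$
and multiplying by $2p^{m+1/2}$ yields (1) on the nose.

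For part (2): the coefficient of $q_\tau^{4p}$ in $f|V(8)$ and $f|V(16)$ vanishes since $p$ is odd, so $b_m(4p)=\sum_f\big(c_1(f)a_f(4p)+c_2(f)a_f(2p)+c_4(f)a_f(p)\big)$; and since $\gcd(4,p)=1$, Hecke multiplicativity gives $b_m(4p)=\sum_f\big(c_1(f)a_f(4)+c_2(f)a_f(2)+c_4(f)\big)a_f(p)$. For $f$ of level $4$ the bracket is just $c_4(f)=\frac16\frac{\binom{2m}{m}}{4^m}$, since $a_f(2)=a_f(4)=0$; for $f$ of level $2$ one substitutes $a_f(2)=-2^m\lambda_2(f)$, $a_f(4)=4^m$ into the constants of Proposition~\ref{ThetaCoefficients4}(2) and finds the bracket equals $\binom{2m}{m}\big(-\tfrac13+\tfrac{1}{6\cdot 2^m}-\tfrac{1}{6\cdot 4^m}\big)$, of modulus $\le\tfrac13\binom{2m}{m}$; for $f$ of level $1$ one uses the Hecke relation $a_f(4)=a_f(2)^2-2^{2m+1}$ and the constants of Proposition~\ref{ThetaCoefficients4}(3), after which the bracket becomes $\binom{2m}{m}\big(\tfrac{a_f(2)^2}{4^m}(-\tfrac12+2^{-m-2})+1-\tfrac{3}{4^{m+1}}\big)$, bounded by $3\binom{2m}{m}$ via $a_f(2)^2\le 8\cdot 4^m$. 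Multiplying by $|a_f(p)|\le 2p^{m+1/2}$ and summing, with the number of newforms of level dividing $4$ bounded by $\dim\mathcal{S}_{2m+2}(\Gamma_0(4))=m-1$ and standard dimension bounds for $\mathcal{S}_{2m+2}(\SL_2(\Z))$ and $\mathcal{S}_{2m+2}^{\mathrm{new}}(\Gamma_0(2))$, gives the stated bound once $m$ is large; the finitely many small $m$ for which $\mathcal{S}_{2m+2}(\SL_2(\Z))\neq 0$ are handled by inserting the (small, explicitly known) value of $a_f(2)$ for the relevant newform(s).

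The delicate point, and the reason part (2) lacks the factor $4^{-m}$ present in part (1), is the level-$1$ contribution: the Hecke relation $a_f(4)=a_f(2)^2-2^{2m+1}$ makes $c_1(f)a_f(4)+c_2(f)a_f(2)+c_4(f)$ as large as $\sim 3\binom{2m}{m}$, so there is no clean ``multiplicity $\times$ coefficient'' collapse as in part (1). Keeping the constant down to $\tfrac43$ forces one to combine the three terms before estimating rather than bounding them termwise, and — for the few small weights where $\dim\mathcal{S}_{2m+2}(\SL_2(\Z))$ is already positive — to exploit that $|a_f(2)|$ is in fact far smaller than Deligne's bound permits. This bookkeeping is the main obstacle; everything else is a routine unwinding of the decompositions already established.
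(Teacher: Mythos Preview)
Your argument for part (1) is correct and is exactly the paper's proof.

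For part (2), however, there is a genuine error. When you write
\[
b_m(4p)=\sum_f\bigl(c_1(f)a_f(4p)+c_2(f)a_f(2p)+c_4(f)a_f(p)\bigr),
\]
you have forgotten that in this paper $f|V(d)=d^{m+1}f(d\tau)$ (the slash operator carries the factor $(\det)^{k/2}$ with $k=2m+2$). Hence the $n$-th Fourier coefficient of $f|V(d)$ is $d^{m+1}a_f(n/d)$, not $a_f(n/d)$, and the bracket multiplying $a_f(p)$ should be
\[
c_1(f)\,a_f(4)\;+\;c_2(f)\cdot 2^{m+1}a_f(2)\;+\;c_4(f)\cdot 4^{m+1}.
\]
Once these factors are restored, the level-$1$ contribution \emph{does} collapse cleanly: substituting $a_h(4)=a_h(2)^2-2^{2m+1}$ and the constants of Proposition~\ref{ThetaCoefficients4}(3), the $a_h(2)^2$ terms cancel exactly and the bracket equals $-2\binom{2m}{m}$, independent of $a_h(2)$. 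Likewise the level-$2$ bracket becomes $-\tfrac{2}{3}\binom{2m}{m}$ and the level-$4$ bracket $+\tfrac{2}{3}\binom{2m}{m}$. This is precisely the simplification the paper performs, after which Deligne's bound and the dimension identity $\dim\mathcal{S}_{2m+2}(\Gamma_0(4))=m-1$ finish the job immediately.

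Your ``delicate point'' paragraph is therefore a misdiagnosis caused by the missing $d^{m+1}$ factors: there is no obstruction, no need to control $a_h(2)$ separately, and no small-$m$ case analysis. The absence of the factor $4^{-m}$ in the bound for (2), compared with (1), comes simply from the $4^{m+1}$ in the $V(4)$-contribution cancelling the $4^{-m}$ built into the $c_i(f)$.
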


\begin{proof}

To prove (1), note that only newforms contribute to $a_m(p).$ In other words,
$$
a_m(p)=-\frac{1}{3}\cdot\frac{{{2{m}}\choose {m}}}{4^{m}}\left[\sum\limits_{f}a_f(p)+2\sum\limits_g a_g(p)+3\sum\limits_{h}a_h(p)\right],
$$
where $f(\tau)=\sum\limits_{n\geq 1}a_f(n)q_{\tau}^n,g(\tau)=\sum\limits_{n\geq 1}a_g(n)q_{\tau}^n$ and $h(\tau)=\sum\limits_{n\geq 1}a_h(n)q_{\tau}^n$ run over normalized newforms of levels $4,2$ and $1$ respectively. One can easily see that 
$$
\dim\mathcal{S}^{\text{new}}_{2{m}+2}(\Gamma_0(4))+2\dim\mathcal{S}^{\text{new}}_{2{m}+2}(\Gamma_0(2))+3\dim\mathcal{S}_{2{m}+2}(\Gamma_0(1))=({m}-1).
$$ A direct application of Theorem~\ref{DeligneTheorem} then gives
$$
|a_m(p)|\leq\frac{1}{3}\cdot\frac{{{2{m}}\choose {m}}}{4^{m}}\cdot({m}-1) d(p)\cdot p^{{m}+1/2},
$$
where $d(\cdot)$ is the divisor function. This shows (1).

We prove (2) in analogous manner. In this case, images of newforms under $V(2)$ and $V(4)$ contribute to $b_m(4p).$ More precisely,
\begin{align}
b_m(4p)\notag&=-\frac{1}{12}\cdot\frac{{{2{m}}\choose{{m}}}}{4^{m}}\bigg[-2\sum\limits_f 4^{{m}+1}a_f(p)\\ 
\notag&+4\sum\limits_{g} 4^{{m}}a_g(p)-2\sum\limits_g 2^{m}\cdot 2^{{m}+1}a_g(p)+2\sum\limits_g 4^{{m}+1}a_g(p)\\ 
\notag&+6\sum\limits_{h} a_h(4p)-6\sum\limits_h a_h(2)\cdot a_h(2p)+9\sum\limits_h 4^{{m}+1}a_h(p)\bigg],
\end{align}
where $f,g$ and $h$ are as above. Using the Hecke relations for Fourier coefficients of $h,$ we get
$$
b_m(4p)=-\frac{1}{12}\cdot\frac{{{2{m}}\choose{{m}}}}{4^{m}}\left[-2\sum\limits_f 4^{{m}+1}a_f(p)+2\sum\limits_g 4^{{m}+1}a_g(p)+6\sum\limits_h 4^{{m}+1}a_h(p)\right].
$$
A direct application of Theorem~\ref{DeligneTheorem} as above then gives (2).
\end{proof}

\section{Determining Distributions}\label{SectionProbTheory}

To determine an explicit $O(3)$ distribution for the values $A_{\lambda},$ we make use of explicit circular distributions of the traces $a_\lambda^{\Cl}(p)$ and interpret the $O(3)$ distribution in terms of the semicircular distribution.

\subsection{Beurling-Selberg and Chebyshev Polynomials}\label{SubsAnalyticTools}
To explicitly determine semicircular distribution from moments, we make use of the Beurling-Selberg trigonometric polynomials and Chebyshev polynomials. Here we recall the definitions and properties of these polynomials.

To determine semicircular distributions, we approximate indicator functions by trigonometric polynomials due to Beurling and Selberg. To make this precise, fix a closed interval $I=[a,b]\subset[0,1]$ and let $\chi_I(x)$ be the indicator function of $I.$ Then there exists trigonometric polynomials $S_M^{+}$ and $S_M^{-}$ of degree at most $M,$ such that the following hold (see \cite{Murty2}).

\noindent
(1) $S_M^{-}(x)\leq \chi_I(x)\leq S_M^+(x)$ for all $x\in[0,1].$ 

\noindent
(2) If $\hat{S}^{\pm}_M(m)$ is the $m$-th Fourier coefficient of $S^{\pm}_M,$ then
$$
\hat{S}^{\pm}_M(0)=b-a\pm\frac{1}{M+1}
$$
Furthermore, if $e(x):=e^{2\pi ix}$ and $0<m\leq|M|,$ then
$$
\left|\hat{S}^{\pm}_M(m)-\frac{e(ma)-e(mb)}{2\pi im}\right|\leq \frac{1}{M+1}.
$$
The polynomials $S_M^+$ and $S_M^{-}$ are respectively called the majorant and minorant Selberg polynomials. To make use of explicit moment estimates in our use of Selberg polynomials, we express $\cos(m\theta)$ as a polynomial in $\cos(\theta).$ To this end, define the Chebyshev polynomials by the recurrence relation
\begin{align*}
&U_0(x)=1 \\
&U_1(x)=2x \\
&U_m(x)=2xU_{m-1}(x)-U_{m-2}(x) \text{ for }m\geq 2.
\end{align*}
An induction argument on $m$ shows that
$$
\cos(m\theta)=\frac{1}{2}\left(U_m(\cos\theta)-U_{m-2}(\cos\theta)\right),
$$
for all $m\geq 2.$ Furthermore, it turns out that the coefficients of these Chebyshev polynomials encode the Rankin-Cohen brackets of modular forms of weight $\frac{3}{2}$ with univariate theta functions of weight $\frac{1}{2}.$  To make this precise, if $m$ is a nonnegative integer, then we write $U_{m}(x)=\sum\limits_{l=0}^{m} b(l;m)x^{l}.$ In this notation, we have the following proposition.

\begin{proposition}\label{ChebyshevProp}
If $f(\tau)=\sum\limits_{n\geq 0}a_f(n)q_{\tau}^n$ and $t\in\N,$ then
$$
[f(\tau),\theta(t\tau)]_m=\sum\limits_{n\geq 1}c_m(n)q_{\tau}^n,
$$
where\footnote{We define $a_f(n)=0$ for $n<0.$}
$$
c_m(n)=\frac{{{2m}\choose m}}{4^m}\cdot n^m\sum\limits_{l=0}^{m}b(2l;2m)\left[\sum\limits_{s\in\Z}a_f(n-ts^2)\left(\sqrt{\frac{t}{n}}s\right)^{2l}\right],
$$
and the Rankin-Cohen bracket is of weight $(\frac{3}{2},\frac{1}{2}).$
\end{proposition}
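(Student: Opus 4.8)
The plan is to compute the $q$-expansion of $[f(\tau),\theta(t\tau)]_m$ directly from the definition of the Rankin--Cohen bracket, and then to recognize the homogeneous polynomial multiplying each Fourier coefficient of $f$ as a rescaling of the even part of the Chebyshev polynomial $U_{2m}$.

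\textbf{Step 1: unwinding the bracket.} I would write $f(\tau)=\sum_{a\ge 0}a_f(a)q_\tau^a$ and $\theta(t\tau)=\sum_{s\in\Z}q_\tau^{ts^2}$, so that $\frac{d^r}{d\tau^r}f=\sum_a(2\pi ia)^ra_f(a)q_\tau^a$ and $\frac{d^r}{d\tau^r}\theta(t\tau)=\sum_s(2\pi i\,ts^2)^rq_\tau^{ts^2}$. Since the bracket has weight $(\frac32,\frac12)$, in the defining sum $\frac{d^r}{d\tau^r}f$ is paired with $\binom{\frac12+m-1}{r}=\binom{m-\frac12}{r}$ and $\frac{d^u}{d\tau^u}\theta(t\tau)$ with $\binom{\frac32+m-1}{u}=\binom{m+\frac12}{u}$. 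Multiplying the two $q$-expansions and extracting the coefficient of $q_\tau^n$, the $(2\pi i)^m$ produced by the derivatives cancels the normalizing $(2\pi i)^{-m}$, giving
$$
c_m(n)=\sum_{s\in\Z}a_f(n-ts^2)\,P_m(n-ts^2,\,ts^2),\qquad P_m(X,Y):=\sum_{r+u=m}(-1)^r\binom{m+\frac12}{u}\binom{m-\frac12}{r}X^rY^u,
$$
a finite sum since $a_f(n-ts^2)=0$ once $ts^2>n$, with $P_m$ homogeneous of degree $m$.

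\textbf{Step 2: identifying $P_m$.} Because $P_m$ is homogeneous, it is enough to evaluate $P_m(1-w,w)$, and here the key input is the elementary identity
$$
\binom{m+\frac12}{u}\binom{m-\frac12}{r}=\frac{\binom{2m}{m}}{4^m}\binom{2m+1}{2r+1}\qquad(r+u=m),
$$
which I would prove by writing each half-integer binomial coefficient as a product of $r$ (resp.\ $u$) linear factors, clearing powers of $2$, and collecting factorials. Substituting it yields $P_m(1-w,w)=\frac{\binom{2m}{m}}{4^m}\sum_{r=0}^m(-1)^r\binom{2m+1}{2r+1}(1-w)^rw^{m-r}$. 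Separately, expanding $\sin((2m+1)\theta)=\operatorname{Im}\big((\cos\theta+i\sin\theta)^{2m+1}\big)$ and dividing by $\sin\theta$ gives, as a polynomial identity in $z=\cos\theta$,
$$
U_{2m}(z)=\sum_{r=0}^m(-1)^r\binom{2m+1}{2r+1}z^{2(m-r)}(1-z^2)^r;
$$
in particular $U_{2m}$ is even, so $\sum_{l=0}^mb(2l;2m)x^l$ is exactly the polynomial $\widetilde U_{2m}(x)$ determined by $\widetilde U_{2m}(z^2)=U_{2m}(z)$. Comparing the two expressions with $w=z^2$ gives $P_m(1-w,w)=\frac{\binom{2m}{m}}{4^m}\widetilde U_{2m}(w)=\frac{\binom{2m}{m}}{4^m}\sum_{l=0}^mb(2l;2m)w^l$.

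\textbf{Step 3: conclusion, and the main point.} By homogeneity, $P_m(n-ts^2,ts^2)=n^mP_m\big(1-\tfrac{ts^2}{n},\tfrac{ts^2}{n}\big)=\frac{\binom{2m}{m}}{4^m}n^m\sum_{l=0}^mb(2l;2m)\big(\tfrac{ts^2}{n}\big)^l$ (legitimate since the proposition concerns $n\ge 1$); feeding this into Step 1 and rewriting $(ts^2/n)^l=(\sqrt{t/n}\,s)^{2l}$ produces the asserted formula for $c_m(n)$. The entire content lies in Step 2 --- producing the half-integer binomial identity and the matching Chebyshev expansion, where one must be careful with the homogeneity and parity bookkeeping so that agreement of the two polynomials on the line $X+Y=1$ really does force agreement everywhere; Steps 1 and 3 are purely formal. (An alternative for Step 2 is to recognize $P_m(1-w,w)$ as a scalar multiple of the Jacobi polynomial $P_m^{(-1/2,1/2)}(1-2w)$ and invoke the standard quadratic transformation linking it to $U_{2m}$, pinning the scalar down at $w=0$; the direct route above is preferable as it avoids citing Jacobi polynomial machinery.)
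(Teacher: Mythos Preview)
Your proof is correct and takes a genuinely different route from the paper. The paper expands $(n-ts^2)^r$ by the binomial theorem, collects the coefficient of $(ts^2/n)^l$ as the finite sum $\sum_{r=0}^l(-1)^l\binom{m+\frac12}{r}\binom{m-\frac12}{m-r}\binom{m-r}{l-r}$, recognizes this as a terminating ${}_2F_1$ at $1$, and evaluates it via Gauss's summation formula together with the Legendre duplication formula; it then matches the answer against an explicit closed form for $b(2l;2m)$ that is verified separately by induction on $m$. In other words, the paper works coefficient-by-coefficient and passes through hypergeometric and Gamma-function identities.

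Your argument instead keeps $P_m$ intact as a homogeneous polynomial, reduces the half-integer product $\binom{m+\frac12}{u}\binom{m-\frac12}{r}$ to $\frac{\binom{2m}{m}}{4^m}\binom{2m+1}{2r+1}$ by an elementary factorial manipulation, and then recognizes $P_m(1-w,w)$ globally as the right-hand side of the classical expansion $U_{2m}(z)=\sum_{r=0}^m(-1)^r\binom{2m+1}{2r+1}z^{2(m-r)}(1-z^2)^r$ coming from $\sin((2m+1)\theta)=\operatorname{Im}(e^{i(2m+1)\theta})$. This avoids hypergeometric series, the Gamma function, and the induction for the explicit Chebyshev coefficients entirely; the paper's approach, in exchange, produces the closed form $b(2l;2m)=\frac{(-1)^{m-l}2^{2l-1}(l+m)!}{l\,(m-l)!(2l-1)!}$ as a byproduct, though that formula is not used elsewhere. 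Your version is shorter and more self-contained; the only place to be slightly more explicit is the verification of the binomial identity in Step~2, which is indeed the routine calculation you describe.
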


\begin{proof}

By definition of the Rankin-Cohen bracket, the Fourier coefficient $c_m(n)$ is given by
$$
c_m(n)=(-n)^m\sum\limits_{l=0}^m\left[\sum\limits_{r=0}^l(-1)^l{{m+\frac{1}{2}}\choose r} {{m-\frac{1}{2}}\choose m-r} {{m-r}\choose {l-r}}\right]\sum\limits_{s\in\Z} a_f(n-ts^2)\left(\sqrt{\frac{t}{n}}s\right)^{2l}.
$$
Since 
$
{{m-\frac{1}{2}}\choose m}=\frac{{{2m}\choose m}}{4^m}
$
for positive integers $m,$ the claim reduces to showing that
\begin{equation}\label{BracketChebEq}
\sum\limits_{r=0}^l(-1)^l{{m+\frac{1}{2}}\choose r} {{m-\frac{1}{2}}\choose m-r} {{m-r}\choose {l-r}}=(-1)^m{{m-\frac{1}{2}}\choose m}b(2l;2m),
\end{equation}
for all $0\leq l\leq m.$ To show (\ref{BracketChebEq}), define
$$
A_r=(-1)^l{{m+\frac{1}{2}}\choose r} {{m-\frac{1}{2}}\choose m-r} {{m-r}\choose {l-r}},
$$
for each $0\leq r\leq l.$ The sum $\sum\limits_{r=0}^l A_r$ can be computed as the special value of a classical hypergeometric series (for background, see \cite{bailey}). Namely,
$$
\frac{A_{r+1}}{A_r}=\frac{(r-l)(r-(m+1/2))}{(r+1)(r+1/2)}
$$
is a rational function in $r,$ and therefore
$$
\sum\limits_{r=0}^l A_r=(-1)^l{{m-\frac{1}{2}}\choose m}{m \choose l}{_2F_1}\left(\begin{array}{cc}
-\frac{1}{2}-m, & -l\\
~& \frac{1}{2}
\end{array}\mid 1\right),
$$
where ${_2F_1}$ is a classical hypergeometric function. By Gauss's identity (see (1.3) of \cite{bailey}), we deduce that
$$
\sum\limits_{r=0}^l A_r=(-1)^l{{m-\frac{1}{2}}\choose m}{m \choose l}\frac{\Gamma(1/2)\Gamma(l+m+1)}{\Gamma(m+1)\Gamma(l+1/2)},
$$
where $\Gamma$ is the standard Gamma function (for definition and properties of the standard Gamma function, see III.1.1 of \cite{mertensPhD}). Since $\Gamma(\frac{1}{2})=\sqrt{\pi},$ Legendre's duplication formula shows that
$$
\frac{\Gamma(1/2)}{l!\Gamma(l+1/2)}=\frac{2^{2l-1}}{l\cdot(2l-1)!}.
$$
It remains to show that 
$$
b(2l;2m)=\frac{(-1)^{m-l}2^{2l-1}(l+m)!}{l\cdot(m-l)!(2l-1)!}.
$$
This is shown by an induction on $m$ using the recurrence relations $b(l;m)=2b(l-1;m)-b(l;m-2)$ for all $l\geq 1$ and $b(0;m)=-b(0;m-2).$
\end{proof}

Finally, we bound the magnitude of $U_{2m}(x)$ on $[-1,1].$ To this end, we have the following lemma.

\begin{lemma}\label{BoundChebyshev}
If $m\in\N$ and $x\in[-1,1],$ then $|U_{m}(x)|\leq m+1.$
\end{lemma}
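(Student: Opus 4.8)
The plan is to avoid a naive triangle-inequality induction on the defining recurrence — which only yields the far weaker bound $|U_m(x)|\le 2|x|\,|U_{m-1}(x)|+|U_{m-2}(x)|\le 3m-1$ — and instead exploit the classical trigonometric representation of the Chebyshev polynomials of the second kind. First I would prove that for every real $\theta$ with $\sin\theta\neq 0$ one has
$$
U_m(\cos\theta)=\frac{\sin((m+1)\theta)}{\sin\theta}.
$$
This follows by induction on $m$: the cases $m=0,1$ are immediate from $U_0=1$ and $U_1(x)=2x$, and the inductive step is a direct consequence of the recurrence $U_m=2xU_{m-1}-U_{m-2}$ together with the product-to-sum identity $2\cos\theta\sin(m\theta)=\sin((m+1)\theta)+\sin((m-1)\theta)$.

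Next I would record the elementary estimate $|\sin(n\phi)|\le n|\sin\phi|$, valid for every nonnegative integer $n$ and every real $\phi$. This is a one-line induction using the addition formula: $|\sin((n+1)\phi)|=|\sin(n\phi)\cos\phi+\cos(n\phi)\sin\phi|\le|\sin(n\phi)|+|\sin\phi|\le(n+1)|\sin\phi|$. Combining the two facts, for $x\in(-1,1)$ write $x=\cos\theta$ with $\theta\in(0,\pi)$, so $\sin\theta\neq 0$, and conclude
$$
|U_m(x)|=\frac{|\sin((m+1)\theta)|}{|\sin\theta|}\le\frac{(m+1)|\sin\theta|}{|\sin\theta|}=m+1.
$$

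Finally I would dispose of the endpoints $x=\pm1$, either by invoking continuity of the polynomial $U_m$ on the closed interval $[-1,1]$, or more explicitly by noting that an easy induction on the recurrence gives $U_m(1)=m+1$ and $U_m(-1)=(-1)^m(m+1)$; in both cases $|U_m(\pm1)|=m+1$, so the bound holds on all of $[-1,1]$. I do not anticipate any genuine obstacle here: the only point worth emphasizing is that one must use the exact sine quotient rather than the recurrence-plus-triangle-inequality argument, which is too lossy.
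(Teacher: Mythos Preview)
Your proposal is correct and follows essentially the same approach as the paper: establish the trigonometric identity $U_m(\cos\theta)=\sin((m+1)\theta)/\sin\theta$ by induction, then deduce the bound. The paper's proof simply says ``the claim follows by an elementary computation'' after the identity, whereas you supply that computation explicitly via the inequality $|\sin(n\phi)|\le n|\sin\phi|$ and treat the endpoints separately; this is more detailed but not a different route.
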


\begin{proof}
An induction argument shows\footnote{In fact, this is an alternative definition of the Chebyshev polynomials $U_{m}(x).$} that $U_{m}(\cos\theta)=\frac{\sin (m+1)\theta}{\sin\theta}$ for all $\theta\in(0,\pi).$ The claim follows by an elementary computation.
\end{proof}

\subsection{Circular Distribution of Clausen Curves}\label{SectionCircular}

Here we obtain certain explicit circular distributions of the traces $a_{\lambda}^{\Cl}.$  The following proposition explicitly determines the distribution of the absolute value of these traces.

\begin{proposition}\label{EffectiveClausenSym}
If $0\leq a<b\leq 1$ and $p\geq 5,$ then we have
$$
\left|\frac{N(a,b;p)}{p}-2\mu_{\ST}([a,b])\right|\leq\frac{26.52}{p^{1/4}},
$$
where  $N(a,b; p):=\sum\limits_{\lambda\in\F_p\setminus\{0,-1\}}\chi_{[a,b]}(|a_\lambda^{\Cl}(p)|)$ and $\mu_{\ST}(I):=\frac{2}{\pi}\int_I\sqrt{1-x^2}\ dx$ denotes the semicircular measure on $[-1,1].$
\end{proposition}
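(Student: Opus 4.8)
The plan is to run an explicit method of moments with Beurling--Selberg polynomials, feeding in the even power moment bounds packaged in Lemma~\ref{BoundCoefficients}. First I would normalise: by Hasse's theorem $x_\lambda:=a_\lambda^{\Cl}(p)/(2\sqrt p)\in[-1,1]$, and writing $x_\lambda=\cos\theta_\lambda$ with $\theta_\lambda\in[0,\pi]$ and $\psi_\lambda:=2\theta_\lambda\in\R/2\pi\Z$ one has $\cos\psi_\lambda=2x_\lambda^2-1=a_\lambda^{\Cl}(p)^2/(2p)-1$. Since $|x_\lambda|\in[a,b]$ exactly when $\cos\psi_\lambda\in[2a^2-1,2b^2-1]$,
$$
N(a,b;p)=\#\{\lambda\in\F_p\setminus\{0,-1\}:\ \psi_\lambda\in\Psi\},\qquad \Psi:=\{\psi\in\R/2\pi\Z:\ \cos\psi\in[2a^2-1,2b^2-1]\},
$$
where $\Psi$ is a union of at most two arcs and is symmetric under $\psi\mapsto-\psi$. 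An elementary change of variables ($\psi=2u$) gives the identity $\tfrac1{2\pi}\int_\Psi(1-\cos\psi)\,d\psi=2\mu_{\ST}([a,b])$, so the target main term is the mass of $\Psi$ under $d\nu:=\tfrac1{2\pi}(1-\cos\psi)\,d\psi$; the weight $1-\cos\psi$ is forced by the fact that the limiting value of $\tfrac1p\sum_\lambda\cos(k\psi_\lambda)$ is $-\tfrac12$ for $k=1$ and $0$ for $k\ge2$ (these are the $U_{2k}$-moments of the semicircle).

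Next I would apply the Selberg minorant/majorant polynomials of Subsection~\ref{SubsAnalyticTools} to each arc of $\Psi$ and sum, obtaining trigonometric polynomials $S_M^{\pm}$ of degree $\le M$ with $S_M^-\le\chi_\Psi\le S_M^+$, with $\hat S_M^{\pm}(0)=\tfrac{|\Psi|}{2\pi}\pm O(\tfrac1{M+1})$ and $|\hat S_M^{\pm}(k)|\ll\tfrac1k$ for $1\le k\le M$, and --- because $\Psi$ is symmetric --- symmetric under $\psi\mapsto-\psi$, hence with real Fourier coefficients supported on cosines. The crucial consequence is
$$
\sum_{\lambda}S_M^{\pm}(\psi_\lambda)=\hat S_M^{\pm}(0)\,(p-2)+2\sum_{k=1}^{M}\hat S_M^{\pm}(k)\sum_{\lambda}\cos(k\psi_\lambda),
$$
with \emph{no} sine terms. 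This is exactly what lets one proceed with only the \emph{even} power moments of $a_\lambda^{\Cl}(p)$, because $\cos(k\psi_\lambda)=\cos(2k\theta_\lambda)=\tfrac12\big(U_{2k}(x_\lambda)-U_{2k-2}(x_\lambda)\big)$ by the Chebyshev identity of Subsection~\ref{SubsAnalyticTools}, an even polynomial in $x_\lambda$, so $\sum_\lambda\cos(k\psi_\lambda)$ is a linear combination of the $\sum_\lambda a_\lambda^{\Cl}(p)^{2j}$ with $j\le k$.

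Then I would insert the explicit moment estimate. Chaining Proposition~\ref{ClassNumberProp}, Lemma~\ref{ThetaBracketsLemma}, Proposition~\ref{ChebyshevProp}, Mertens' Theorem~\ref{HolProjTh}, and the Deligne bound of Lemma~\ref{BoundCoefficients}, the combination $\sum_\lambda U_{2j}(x_\lambda)$ is --- up to elementary corrections (the $c^{\pm}(p,\cdot)$ of Proposition~\ref{ClassNumberProp}, the coefficients of the $\Lambda_s(\tau;\cdot)$ of Theorem~\ref{HolProjTh}, and a term involving $H^{\ast}(p)$, each of size $O(j)$ by Lemma~\ref{BoundChebyshev}) --- governed by the $p$-th and $4p$-th Fourier coefficients of $\pi_{\hol}([\mathcal H,\theta]_j)$ and $\pi_{\hol}([\mathcal H,\theta(4\tau)]_j)$, which Lemma~\ref{BoundCoefficients} bounds explicitly. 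This yields, for $k\ge1$,
$$
\sum_{\lambda}\cos(k\psi_\lambda)=-\tfrac12\,\delta_{k,1}\,(p-2)+E_k,\qquad |E_k|\le c_1\,k\,p^{1/2}+c_2\,k,
$$
with absolute $c_1,c_2$ (and $E_1$ consisting only of the elementary terms, since the relevant weight-$4$ cuspidal spaces vanish). Substituting, the $k=1$ term $2\hat S_M^{\pm}(1)\cdot(-\tfrac12)(p-2)$ merges with $\hat S_M^{\pm}(0)(p-2)$ into $\big[\hat S_M^{\pm}(0)-\hat S_M^{\pm}(1)\big](p-2)=\big(\tfrac1{2\pi}\int_\Psi(1-\cos\psi)\,d\psi+O(\tfrac1{M+1})\big)(p-2)=2\mu_{\ST}([a,b])\,p+O(\tfrac pM+1)$, while the remaining terms contribute $2\sum_{k\le M}|\hat S_M^{\pm}(k)|\,|E_k|\ll Mp^{1/2}$. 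Hence $N(a,b;p)=2\mu_{\ST}([a,b])\,p+O(\tfrac pM+Mp^{1/2}+1)$; taking $M\asymp p^{1/4}$ and dividing by $p$ gives the $p^{-1/4}$ bound, and the constant $26.52$ comes from propagating the explicit Selberg constants of \cite{Murty2}, the constants $\tfrac23,\tfrac43$ and dimension count of Lemma~\ref{BoundCoefficients}, and the bound $|U_m|\le m+1$ of Lemma~\ref{BoundChebyshev}, with an optimal integer choice of $M$.

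The hard part will be the explicit moment estimate above: one must carefully follow the chain from the weighted class number sums, through their realisation as Fourier coefficients of Rankin--Cohen brackets of $\mathcal H$ with $\theta$ and $\theta(4\tau)$, through Mertens' holomorphic projection, to the newform decomposition and Deligne's bound, and keep every elementary correction term under fully explicit control with constants small enough to reach the stated bound; the requisite bracket-coefficient bounds are precisely the content of Lemma~\ref{BoundCoefficients}, so the remaining effort is constant-sensitive bookkeeping, together with the standard (but equally constant-sensitive) Beurling--Selberg estimate. A secondary point to get right is the symmetric truncation $S_M^{\pm}$: it is what eliminates the uncontrolled sine sums $\sum_\lambda\sin(k\psi_\lambda)$, and one should check that symmetrising the Selberg polynomials preserves the quality bounds on $\hat S_M^{\pm}(0)$ and $\hat S_M^{\pm}(k)$.
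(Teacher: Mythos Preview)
Your proposal is correct and follows essentially the same route as the paper: Beurling--Selberg approximation of the indicator, reduction via the Chebyshev identity $\cos(2k\theta)=\tfrac12(U_{2k}-U_{2k-2})$ to the even moments $\sum_\lambda U_{2m}(\cos\theta_\lambda)$, estimation of those moments through the chain Proposition~\ref{ClassNumberProp}~$\to$~Proposition~\ref{ChebyshevProp}~$\to$~Theorem~\ref{HolProjTh}~$\to$~Lemma~\ref{BoundCoefficients}, and the choice $M=\lfloor p^{1/4}\rfloor$. The only cosmetic difference is that you work with $\psi_\lambda=2\theta_\lambda$ and a symmetric arc pair $\Psi\subset\R/2\pi\Z$, whereas the paper applies the Selberg polynomial to the single interval $I'=[\theta_b/\pi,\theta_a/\pi]\subset[0,1]$ and evaluates it at both $\theta_\lambda/\pi$ and $(\pi-\theta_\lambda)/\pi$; this automatically produces the cosine sums and so sidesteps the symmetrisation concern you flag as a secondary point.
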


\begin{proof}

Following \cite{murty}, we define $\theta_a,\theta_b\in[0,\frac{\pi}{2}]$ such that $\cos\theta_a=a, \cos\theta_b=b$ and define $I'=[\frac{\theta_b}{\pi},\frac{\theta_a}{\pi}].$  Furthermore, fix a prime $p\geq 5$ and write $a_{\lambda}^{\Cl}(p)=2\sqrt{p}\cos\theta_\lambda,$ where $\theta_\lambda\in[0,\pi].$ In this notation, we have
\begin{align}
N(a,b;p)\notag&=\sum\limits_{\lambda\in\F_p\setminus\{0,-1\}}\left(\chi_{I'}\left(\frac{\theta_\lambda}{\pi}\right)+\chi_{I'}\left(\frac{\pi-\theta_\lambda}{\pi}\right)\right)\\ 
\notag&\leq \sum\limits_{\lambda\in\F_p\setminus\{0,-1\}}\left(\sum\limits_{|m|\leq M}\hat{S}_M^{+}(m) \left[e\left(\frac{ m\theta_\lambda}{\pi}\right)+e\left(\frac{- m\theta_\lambda}{\pi}\right)\right]\right) \\
\notag&=\sum\limits_{\lambda\in\F_p\setminus\{0,-1\}}\sum\limits_{|m|\leq M}\hat{S}_M^{+}(m)\cdot 2\cos(2m\theta_\lambda)\\
\notag&=2\hat{S}_M^+(0)\sum\limits_{\lambda\in\F_p\setminus\{0,-1\}}1+\sum\limits_{0<|m|\leq M}\hat{S}^+_M(m)\sum\limits_{\lambda\in\F_p\setminus\{0,-1\}}\left[U_{2m}(\cos\theta_\lambda)-U_{2m-2}(\cos\theta_\lambda)\right].
\end{align}
We regroup terms to get that
\begin{align}
N(a,b;p)\notag&\leq p\left(2\hat{S}_M^+(0)-\left(\hat{S}_M^+(1)+\hat{S}_M^+(-1)\right)\right)\\
\notag&+\sum\limits_{1\leq m\leq M}\left(\hat{S}^+_M(m)+\hat{S}^+_M(-m)-\left(\hat{S}^+_M(m+1)+\hat{S}^+_M(-m-1)\right)\right)\sum\limits_{\lambda\in\F_p\setminus\{0,-1\}}U_{2m}(\cos\theta_\lambda).
\end{align}
The first term approximates the semicircular measure. Namely,
$$
\left|\left(2\hat{S}_M^+(0)-\left(\hat{S}_M^+(1)+\hat{S}_M^+(-1)\right)\right)-2\mu_{ST}([a,b])\right|\leq\frac{4}{M+1}.
$$
Furthermore, if $1\leq m\leq M,$ then
$$
\left|\hat{S}_M^+(m)+\hat{S}_M^+(-m)\right|\leq \frac{2}{m}+\frac{2}{M+1}<\frac{4}{m}.
$$
Therefore, we have
$$
N(a,b;p)-2p\mu_{ST}(I)\leq \frac{4p}{M+1}+\sum\limits_{1\leq m\leq M}\frac{8}{m}\left|\sum\limits_{\lambda\in\F_p\setminus\{0,-1\}} U_{2m}(\cos\theta_\lambda)\right|.
$$
To find an upper bound for the right-hand side, we begin by rewriting the sums in terms of traces of Clausen elliptic curves. More precisely, if $1\leq m\leq M$, then we write
$$
\sum\limits_{\lambda\in\F_p\setminus\{0,-1\}} U_{2m}(\cos\theta_\lambda)=\sum\limits_{l=0}^m\frac{b(2l;2m)}{(2\sqrt{p})^{2l}}\sum\limits_{\lambda\in\F_p\setminus\{0,-1\}} a^{\Cl}_\lambda(p)^{2l}.
$$
This allows us to rewrite the right-hand side as weighted sums of class numbers. Namely,  Proposition~\ref{ClassNumberProp} implies that
$$
\sum\limits_{\lambda\in\F_p\setminus\{0,-1\}} U_{2m}(\cos\theta_\lambda)=\sum\limits_{l=0}^m\frac{b(2l;2m)}{(2\sqrt{p})^{2l}}\left(\sum\limits_{\substack{0<s<2\sqrt{p} \\ 2\mid s}} \left(H^\star\left(4p-s^2\right)+2H^\star\left(\frac{4p-s^2}{4}\right)\right)s^{2l}-c^+(p,l)\right).
$$
Proposition~\ref{ChebyshevProp} allows us to rewrite these weighted sums as Fourier coefficients of cusp forms. To make this precise, write
$
\pi_{\hol}([\mathcal{H}(\tau),\theta(\tau)]_m)=\sum\limits_{n\geq 1}a_m(n)q_{\tau}^n
$
and 
$
\pi_{\hol}([\mathcal{H}(\tau),\theta(4\tau)]_m)=\sum\limits_{n\geq 1}b_m(n)q_{\tau}^n.
$
  In this notation, Proposition~\ref{ChebyshevProp} and Theorem~\ref{HolProjTh} show that
$$
\sum\limits_{l=0}^m b(2l;2m)\sum\limits_{\substack{0<s<2\sqrt{p} \\ 2\mid s}}H^\star(4p-s^2)\left(\frac{s}{2\sqrt{p}}\right)^{2l}=\frac{1}{2p^{m}}\cdot\frac{1}{{{2m}\choose m}}b_m(4p)-\frac{1}{p^m}
$$
and
$$
\sum\limits_{l=0}^m b(2l;2m)\sum\limits_{\substack{0<s<2\sqrt{p} \\ 2\mid s}} 2H^\star\left(\frac{4p-s^2}{4}\right)\left(\frac{s}{2\sqrt{p}}\right)^{2l}=\frac{1}{p^m}\frac{4^m}{{{2m}\choose m}}a_m(p)-\frac{1}{p^m}.
$$
Furthermore, if $c^+(p,l)\neq 0,$ then we have
\begin{align}
\sum\limits_{l=0}^m\frac{b(2l;2m)}{(2\sqrt{p})^{2l}}c^+(p,l)\notag&=\sum\limits_{l=0}^m b(2l;2m)\cdot\frac{1}{2}\left(\left(\frac{a}{\sqrt{p}}\right)^{2l}+\left(\frac{b}{\sqrt{p}}\right)^{2l}\right)\\ 
\notag&=\frac{1}{2}\left(U_{2m}\left(\frac{a}{\sqrt{p}}\right)+U_{2m}\left(\frac{b}{\sqrt{p}}\right)\right),
\end{align}
where $p=a^2+b^2.$ Since $0\leq \frac{a}{\sqrt{p}},\frac{b}{\sqrt{p}}\leq 1,$ Lemma~\ref{BoundChebyshev} gives us that 
$$
\left|\sum\limits_{l=0}^m\frac{b(2l;2m)}{(2\sqrt{p})^{2l}}c^+(p,l)\right|\leq(2m+1).
$$
We now apply Lemma~\ref{BoundCoefficients} to obtain the upper bound\footnote{The purpose of the last term is to make the argument symmetric.}
$$
\left|\sum\limits_{\lambda\in\F_p\setminus\{0,-1\}} U_{2m}(\cos\theta_\lambda)\right|\leq \frac{4}{3}(m-1)p^{1/2}+(2m+1)+\frac{2}{p^m}.
$$
This gives an explicit upper bound on the error. Namely, we have that
$$
N(a,b;p)-2p\mu_{\ST}(I)\leq\frac{4p}{M+1}+\sum\limits_{1\leq m\leq M}\frac{8}{m}\left(\frac{4}{3}(m-1)p^{1/2}+(2m+1)+\frac{2}{p^m}\right).
$$
Since $M$ is an arbitrary positive integer, we choose $M$ in terms of $p$ so that the upper bound is minimal. To this end, we find a simpler upper bound. The elementary inequality  $\frac{2}{p^m}\leq 0.4$ for all $m\geq 1$ and $p\geq 5$ and an elementary computation give us that
$$
N(a,b;p)-2p\mu_{\ST}(I)\leq\frac{4p}{M+1}+\frac{8}{3}\left(6M+4M\sqrt{p}+4.2H_M\right),
$$
where $H_M$ is the $M$-th harmonic number. Since $H_M\leq \log M+1$ for all positive integers $M,$ we then have that
$$
N(a,b;p)-2p\mu_{\ST}(I)\leq\frac{4p}{M+1}+\frac{8}{3}\left(6M+4M\sqrt{p}+4.2\log M+4.2\right).
$$
We set $M=\lfloor p^{1/4}\rfloor.$ This gives us that
$$
N(a,b;p)-2p\mu_{ST}(I)\leq 4p^{3/4}+\frac{8}{3}\left(6p^{1/4}+4p^{3/4}+1.05\log p+4.2\right).
$$
An elementary numerical computation then gives that
$$
\frac{N(a,b;p)}{p}-2\mu_{\ST}(I)\leq\frac{26.52}{p^{1/4}},
$$
for all $p\geq 5.$ An analogous argument with $S_M^{-}$ in place of $S_M^+$ gives the theorem.
\end{proof}
 
To determine the distribution of $A_{\lambda}$ using Theorem~\ref{K3ClausenCon}, we show that the values $\pm1$ of the Legendre character $\phi(-\lambda)$ are equally distributed as $p\to\infty$ for $|a_{\lambda}^{\Cl}(p)|$ lying in a given interval. Namely, if $M(a,b;p):=\sum\limits_{\lambda\in\F_p\setminus\{0,-1\}}\phi(-\lambda)\chi_{[a,b]}(|a_{\lambda}^{\Cl}(p)|)$ is the cancellation in the interval, then we have the following proposition.

\begin{proposition}\label{EffectiveClausenSym2}
If $0\leq a<b\leq 1$ and $p\geq 5,$ then we have
$$
\left|\frac{M(a,b;p)}{p}\right|\leq\frac{28.89}{p^{1/4}}.
$$
\end{proposition}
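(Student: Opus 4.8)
The plan is to follow the proof of Proposition~\ref{EffectiveClausenSym} almost verbatim, making two changes: carry the character value $\phi(-\lambda)$ through every step, and invoke part (2) of Proposition~\ref{ClassNumberProp} in place of part (1). As there, write $a_\lambda^{\Cl}(p)=2\sqrt p\cos\theta_\lambda$ with $\theta_\lambda\in[0,\pi]$, fix $\theta_a,\theta_b\in[0,\tfrac\pi2]$ with $\cos\theta_a=a$, $\cos\theta_b=b$, set $I'=[\tfrac{\theta_b}\pi,\tfrac{\theta_a}\pi]$, and put $g(x):=\chi_{I'}(x)+\chi_{I'}(1-x)$, so that $M(a,b;p)=\sum_{\lambda\in\F_p\setminus\{0,-1\}}\phi(-\lambda)\,g(\theta_\lambda/\pi)$. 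Writing $T_M^{\pm}(x):=S_M^{\pm}(x)+S_M^{\pm}(1-x)$ we obtain trigonometric polynomials of degree $\leq M$ satisfying $T_M^-\leq g\leq T_M^+$ on $[0,1]$, whose $m$-th Fourier coefficient is $\hat S_M^{\pm}(m)+\hat S_M^{\pm}(-m)$.

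The one genuinely new point is that, since $\phi(-\lambda)$ takes both signs, one cannot sandwich $M(a,b;p)$ between $\sum_\lambda\phi(-\lambda)T_M^-$ and $\sum_\lambda\phi(-\lambda)T_M^+$. Instead I would write $g=T_M^+-(T_M^+-g)$ with $0\leq T_M^+-g\leq T_M^+-T_M^-$, whence
$$
\bigl|M(a,b;p)\bigr|\leq\Bigl|\sum_{\lambda}\phi(-\lambda)\,T_M^+(\theta_\lambda/\pi)\Bigr|+\sum_{\lambda}\bigl(T_M^+-T_M^-\bigr)(\theta_\lambda/\pi),
$$
reducing the proposition to bounding a twisted trigonometric-polynomial sum (an exact identity, hence insensitive to the sign of $\phi(-\lambda)$) and an untwisted discrepancy sum. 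For the first, expand $T_M^+$ in its Fourier series: the constant mode contributes $2\hat S_M^+(0)\sum_\lambda\phi(-\lambda)=-2\hat S_M^+(0)=O(1)$, using $\sum_{\lambda\in\F_p\setminus\{0,-1\}}\phi(-\lambda)=-\phi(1)=-1$, while for $m\geq 1$ one uses $\cos(2m\theta_\lambda)=\tfrac12\bigl(U_{2m}(\cos\theta_\lambda)-U_{2m-2}(\cos\theta_\lambda)\bigr)$ to pass to $\sum_\lambda\phi(-\lambda)U_{2m}(\cos\theta_\lambda)=\sum_{l=0}^m b(2l;2m)(2\sqrt p)^{-2l}\sum_\lambda\phi(-\lambda)a_\lambda^{\Cl}(p)^{2l}$.

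Applying Proposition~\ref{ClassNumberProp}(2) and then Proposition~\ref{ChebyshevProp} together with Theorem~\ref{HolProjTh} (exactly as in the proof of Proposition~\ref{EffectiveClausenSym}, but with the combination $4H^\star(\tfrac{4p-s^2}4)-H^\star(4p-s^2)$) rewrites this as
$$
\frac{2\cdot4^m}{p^m\binom{2m}m}\,a_m(p)-\frac{1}{2p^m\binom{2m}m}\,b_m(4p)-\frac1{p^m}-\frac12\Bigl(U_{2m}\bigl(\tfrac a{\sqrt p}\bigr)-U_{2m}\bigl(\tfrac b{\sqrt p}\bigr)\Bigr),
$$
where $a_m,b_m$ are the Fourier coefficients of $\pi_{\hol}([\mathcal{H},\theta]_m)$ and $\pi_{\hol}([\mathcal{H},\theta(4\tau)]_m)$ and $p=a^2+b^2$ (the last term being absent when $p\equiv 3\pmod 4$). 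Lemma~\ref{BoundChebyshev} bounds the last term by $2m+1$, and Lemma~\ref{BoundCoefficients} bounds the first two by $\tfrac43(m-1)p^{1/2}$ and $\tfrac23(m-1)p^{1/2}$ respectively, so $\bigl|\sum_\lambda\phi(-\lambda)U_{2m}(\cos\theta_\lambda)\bigr|\leq 2(m-1)p^{1/2}+2m+1+p^{-m}$. Combined with $|\hat S_M^+(m)+\hat S_M^+(-m)|<4/m$ and summed over $1\leq m\leq M$, this controls the twisted main term by $O(Mp^{1/2})$ (plus an $O(\log M)$ from the $2m+1$ and $p^{-m}$ pieces). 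For the untwisted discrepancy, the constant mode of $T_M^+-T_M^-$ contributes at most $2\bigl(\hat S_M^+(0)-\hat S_M^-(0)\bigr)p=\tfrac{4p}{M+1}$, and its nonconstant modes are handled precisely as in the proof of Proposition~\ref{EffectiveClausenSym}: via Proposition~\ref{ClassNumberProp}(1), Proposition~\ref{ChebyshevProp}, Theorem~\ref{HolProjTh} and Lemma~\ref{BoundCoefficients} each $\sum_\lambda U_{2m}(\cos\theta_\lambda)$ is $O(mp^{1/2})$, and the coefficient bound $\bigl|(\hat S_M^+(m)+\hat S_M^+(-m))-(\hat S_M^-(m)+\hat S_M^-(-m))\bigr|\leq 4/(M+1)$ gives a contribution of size $\tfrac{4p}{M+1}+O(Mp^{1/2})$. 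Choosing $M=\lfloor p^{1/4}\rfloor$ and running the same elementary numerical estimate as in the final step of Proposition~\ref{EffectiveClausenSym} (with the slightly larger leading constant $2$ coming from the twisted bound above) yields $|M(a,b;p)|/p\leq 28.89/p^{1/4}$ for all $p\geq 5$; the matching lower bound for $M(a,b;p)$ follows by the symmetric argument with $T_M^-$ in place of $T_M^+$.

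I expect the only real obstacle to be the sign issue isolated in the second paragraph: arranging the one-sided Beurling-Selberg approximants so that the signed sum splits into an exact (sign-insensitive) trigonometric main term and a nonnegative, untwisted error already under control from Proposition~\ref{EffectiveClausenSym}. Once that is in place the argument is a repackaging of the same Rankin-Cohen bracket / holomorphic projection / Deligne-bound chain, and the remaining work is bookkeeping the numerical constant.
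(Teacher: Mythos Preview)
Your treatment of the sign issue is the substantive difference from the paper. The paper simply writes, by analogy with Proposition~\ref{EffectiveClausenSym},
\[
M(a,b;p)\leq -\bigl(2\hat S_M^+(0)-(\hat S_M^+(1)+\hat S_M^+(-1))\bigr)+\sum_{1\le m\le M}(\cdots)\sum_\lambda\phi(-\lambda)U_{2m}(\cos\theta_\lambda),
\]
i.e., it replaces $g$ by $T_M^+$ inside the signed sum as if the majorant inequality carried over, and then bounds the right-hand side. You correctly observe that this step is not a priori justified when $\phi(-\lambda)$ takes both signs, and your split $g=T_M^+-(T_M^+-g)$ into an exact twisted trigonometric sum plus a nonnegative untwisted remainder bounded by $T_M^+-T_M^-$ is a valid repair. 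The twisted piece then matches the paper's computation exactly; your bound $\bigl|\sum_\lambda\phi(-\lambda)U_{2m}(\cos\theta_\lambda)\bigr|\leq 2(m-1)p^{1/2}+(2m+1)+O(p^{-m})$ agrees with the paper's (the paper records $3/p^m$ for the last term).

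The place to be careful is the final numerics. The paper's displayed bound $\frac{4}{M+1}+8(2M+2M\sqrt p+1.6H_M)$ contains \emph{no} term of order $p/(M+1)$, because the constant Fourier mode contributes only $O(1)$ via $\sum_\lambda\phi(-\lambda)=-1$. In your decomposition, however, the untwisted remainder $\sum_\lambda(T_M^+-T_M^-)$ genuinely carries a term of size $\asymp p/(M+1)$ from its constant mode, plus nonconstant-mode contributions of size $\asymp Mp^{1/2}$ coming from the $|U_{2m}|$-sums. With $M=\lfloor p^{1/4}\rfloor$ these extra pieces add roughly $8p^{3/4}+\tfrac{16}{3}p^{3/4}\approx 13.3\,p^{3/4}$ on top of the twisted part's $\approx 16p^{3/4}$, giving a leading constant near $29.3$ rather than $28.89$. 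So either (i) take $\tfrac12(T_M^++T_M^-)$ as the trigonometric main term, which halves the untwisted remainder and brings the constant comfortably below $28.89$, or (ii) carry the numerics through honestly and accept a slightly larger constant. The structure of your argument is sound; only this last bookkeeping step needs adjustment.
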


\begin{proof}
In the notation of the proof of Proposition~\ref{EffectiveClausenSym}, we similarly have
\begin{align}
\notag&M(a,b;p)\leq -\left(2\hat{S}_M^+(0)-\left(\hat{S}_M^+(1)+\hat{S}_M^+(-1)\right)\right)\\
\notag&+\sum\limits_{1\leq m\leq M}\left(\hat{S}^+_M(m)+\hat{S}^+_M(-m)-\left(\hat{S}^+_M(m+1)+\hat{S}^+_M(-m-1)\right)\right)\sum\limits_{\lambda\in\F_p\setminus\{0,-1\}}\phi(-\lambda)U_{2m}(\cos\theta_\lambda).
\end{align}
Imitating the proof of Proposition~\ref{EffectiveClausenSym} and using part (2) of Proposition~\ref{ClassNumberProp}, we get
$$
\left|\sum\limits_{\lambda\in\F_p\setminus\{0,-1\}}\phi(-\lambda)U_{2m}(\cos\theta_\lambda) \right|\leq 2(m-1)p^{1/2}+(2m+1)+\frac{3}{p^{m}}.
$$
This implies that
$$
M(a,b;p)\leq\frac{4}{M+1}+8\left(2M+2M\sqrt{p}+1.6H_M\right),
$$
where $H_M$ is the $M$-th harmonic number. Choosing $M=\lfloor p^{1/4}\rfloor,$ we get
$$
|M(a,b;p)|\leq \frac{28.89}{p^{1/4}},
$$
for all $p\geq 5,$ proving the proposition.
\end{proof}

Propositions~\ref{EffectiveClausenSym} and \ref{EffectiveClausenSym2} explicitly determine the distribution of absolute values of Clausen traces with a fixed Legendre character value. The following proposition makes this precise.

\begin{proposition}\label{EffectiveClausenSym3}
If $0\leq a<b\leq 1$ and $p\geq 5,$ then
$$
\left|\frac{H^{\pm}(a,b;p)}{p}-\mu_{\ST}([a,b])\right|\leq\frac{27.71}{p^{1/4}},
$$
where 
$$
H^{\pm}(a,b;p):=\#\left\{\lambda\in\F_p\setminus\{0,-1\}\ :\ \left|\frac{a_{\lambda}^{\Cl}(p)}{2\sqrt{p}}\right|\in[a,b], \phi(-\lambda)=\pm1\right\}.
$$
\end{proposition}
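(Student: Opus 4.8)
The plan is to realize $H^{\pm}(a,b;p)$ as a simple linear combination of the quantities $N(a,b;p)$ and $M(a,b;p)$ controlled by Propositions~\ref{EffectiveClausenSym} and \ref{EffectiveClausenSym2}, and then conclude by the triangle inequality. The only point that needs a moment's thought is the elementary observation that $\phi(-\lambda)$ never vanishes on the index set.

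First I would note that since $\lambda\in\F_p\setminus\{0,-1\}$ we have $-\lambda\neq 0$, so $\phi(-\lambda)\in\{\pm1\}$ for every such $\lambda$. Consequently the indicator function of the condition $\phi(-\lambda)=\pm1$ equals $\tfrac{1}{2}\left(1\pm\phi(-\lambda)\right)$. Multiplying by $\chi_{[a,b]}\!\left(\left|a_\lambda^{\Cl}(p)/2\sqrt p\right|\right)$ and summing over $\lambda\in\F_p\setminus\{0,-1\}$ yields
$$
H^{\pm}(a,b;p)=\frac{1}{2}\Big(N(a,b;p)\pm M(a,b;p)\Big),
$$
directly from the definitions of $N(a,b;p)$ and $M(a,b;p)$.

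Next I would subtract the main term and split it accordingly:
$$
\frac{H^{\pm}(a,b;p)}{p}-\mu_{\ST}([a,b])=\frac{1}{2}\left(\frac{N(a,b;p)}{p}-2\mu_{\ST}([a,b])\right)\pm\frac{1}{2}\cdot\frac{M(a,b;p)}{p}.
$$
Applying the triangle inequality and invoking Proposition~\ref{EffectiveClausenSym} for the first bracket and Proposition~\ref{EffectiveClausenSym2} for the second term gives
$$
\left|\frac{H^{\pm}(a,b;p)}{p}-\mu_{\ST}([a,b])\right|\leq\frac{1}{2}\cdot\frac{26.52}{p^{1/4}}+\frac{1}{2}\cdot\frac{28.89}{p^{1/4}}=\frac{27.705}{p^{1/4}}<\frac{27.71}{p^{1/4}},
$$
which is the claimed bound for all $p\geq 5$. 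There is no serious obstacle here: the proposition is a direct corollary of the two preceding ones, and the constant $27.71$ is just the average of $26.52$ and $28.89$ rounded up. I would present this explicitly rather than waving at it, since the numerical constant is used verbatim in the proof of the main theorem.
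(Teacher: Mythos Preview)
Your proof is correct and follows essentially the same approach as the paper: the paper also writes $H^{\pm}(a,b;p)=\tfrac{1}{2}\bigl(N(a,b;p)\pm M(a,b;p)\bigr)$ and then invokes Propositions~\ref{EffectiveClausenSym} and~\ref{EffectiveClausenSym2}. Your version is slightly more explicit in checking $\phi(-\lambda)\in\{\pm1\}$ and in displaying the averaging of the constants, but the argument is the same.
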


\begin{proof}
By definition of $M(a,b;p)$ and $N(a,b;p),$ we have that $M(a,b;p)=H^+(a,b;p)-H^{-}(a,b;p)$ and $N(a,b;p)=H^+(a,b;p)+H^{-}(a,b;p).$  Equivalently, we have that
$$
H^{\pm}(a,b;p)=\frac{1}{2}\left(N(a,b;p)\pm M(a,b;p)\right).
$$
An application of Propositions~\ref{EffectiveClausenSym} and \ref{EffectiveClausenSym2} proves our claim.
\end{proof}

\subsection{$O(3)$ Distribution in Terms of Semicircular Distribution}

Here we write the $O(3)$ distribution in terms of the Sato-Tate distribution. To make this precise, if $-3\leq a<b\leq 3,$ then define
\begin{equation}
\mu_{\Bat}([a,b]):=\frac{1}{4\pi}\int_a^b f(t)dt,
\end{equation}
where $f(t)$ is as in Section~\ref{SectionIntro}. In this notation, we have the following lemma.

\begin{lemma}\label{ProbProp}
The following are true.

\noindent
(1) If $0\leq a<b\leq 1,$ then we have
$$
\mu_{\Bat}([a,b])=\mu_{\ST}\left(\left[\frac{\sqrt{1+a}}{2},\frac{\sqrt{1+b}}{2}\right]\right)+\mu_{\ST}\left(\left[\frac{\sqrt{1-b}}{2},\frac{\sqrt{1-a}}{2}\right]\right).
$$

\noindent
(2) If $0 \leq a\leq 1<b\leq 3,$ then we have
$$
\mu_{\Bat}([a,b])=\mu_{\ST}\left(\left[\frac{\sqrt{1+a}}{2},\frac{\sqrt{1+b}}{2}\right]\right)+\mu_{\ST}\left(\left[0,\frac{\sqrt{1-a}}{2}\right]\right).
$$

\noindent
(3) If $1\leq a<b\leq 3,$ then we have
$$
\mu_{\Bat}([a,b])=\mu_{\ST}\left(\left[\frac{\sqrt{1+a}}{2},\frac{\sqrt{1+b}}{2}\right]\right).
$$
\end{lemma}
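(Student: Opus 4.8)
The plan is to prove all three identities by two explicit changes of variables that convert the piecewise density $f(t)$ into the semicircular density, after which each case is a matter of matching integration ranges to the correct branch of $f$.

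First I would record the two substitutions. Under $u=\frac{\sqrt{1+t}}{2}$, so that $t=4u^2-1$ and $dt=8u\,du$, one has $1-u^2=\frac{3-t}{4}$ and $2u=\sqrt{1+t}$, hence
$$\frac{2}{\pi}\sqrt{1-u^2}\,du=\frac{1}{4\pi}\sqrt{\frac{3-t}{1+t}}\,dt,$$
and $t=a,b$ correspond to $u=\frac{\sqrt{1+a}}{2},\frac{\sqrt{1+b}}{2}$. Under $v=\frac{\sqrt{1-t}}{2}$, so that $t=1-4v^2$ and $dt=-8v\,dv$, one has $1-v^2=\frac{3+t}{4}$ and $2v=\sqrt{1-t}$, hence
$$\frac{2}{\pi}\sqrt{1-v^2}\,dv=-\frac{1}{4\pi}\sqrt{\frac{3+t}{1-t}}\,dt,$$
with $t=a,b$ corresponding to $v=\frac{\sqrt{1-a}}{2},\frac{\sqrt{1-b}}{2}$; note that this second substitution reverses orientation, and that $t=1$ maps to $v=0$. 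For all ranges of $a,b$ occurring in the statement the images lie in $[0,1]\subseteq[-1,1]$, so the semicircular measures on the right are well defined and the endpoint orderings are correct.

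Next I would dispatch the three cases. For (3), $|t|=t$ and $1<|t|<3$ on $[a,b]$, so $f(t)=\sqrt{\frac{3-t}{1+t}}$ and integrating the first displayed identity over $[a,b]$ gives the claim at once. For (1), $|t|=t<1$ on $[a,b]$, so $f(t)=\sqrt{\frac{3-t}{1+t}}+\sqrt{\frac{3+t}{1-t}}$; the first summand contributes $\mu_{\ST}([\frac{\sqrt{1+a}}{2},\frac{\sqrt{1+b}}{2}])$ via the $u$-substitution, and the second contributes $\mu_{\ST}([\frac{\sqrt{1-b}}{2},\frac{\sqrt{1-a}}{2}])$ via the $v$-substitution, the sign from $dt=-8v\,dv$ being absorbed by swapping the limits of integration. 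For (2), I would split $\int_a^b=\int_a^1+\int_1^b$, using $f(t)=\sqrt{\frac{3-t}{1+t}}+\sqrt{\frac{3+t}{1-t}}$ on $[a,1]$ and $f(t)=\sqrt{\frac{3-t}{1+t}}$ on $[1,b]$; the total $\sqrt{\frac{3-t}{1+t}}$ contribution over $[a,b]$ assembles into $\mu_{\ST}([\frac{\sqrt{1+a}}{2},\frac{\sqrt{1+b}}{2}])$, while the $\sqrt{\frac{3+t}{1-t}}$ contribution over $[a,1]$ becomes $\mu_{\ST}([0,\frac{\sqrt{1-a}}{2}])$ because $t=1$ corresponds to $v=0$.

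The computation is entirely elementary, so there is no serious obstacle; the only points requiring care are tracking the orientation reversal in the $v$-substitution and correctly matching each branch of the piecewise definition of $f$ with the relevant subinterval of integration, in particular using that $|t|=t$ throughout since $a\geq 0$.
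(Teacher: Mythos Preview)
Your proposal is correct and follows essentially the same approach as the paper: both arguments reduce the identity to the change of variables $u=\tfrac{\sqrt{1+t}}{2}$ (and its companion $v=\tfrac{\sqrt{1-t}}{2}$) that converts the semicircular density into the two branches of $f(t)$, then match integration ranges case by case. The paper carries out the substitution in three steps (scale, square, shift) and writes out case (2) in detail, whereas you do the substitution in one step and treat all three cases; the content is the same.
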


\begin{proof}
This is an exercise in integration. We will prove (2). The proof of (1) and (3) are analogous.  By the definition of $\mu_{\ST},$ we have that
\begin{align}
\mu_{\ST}\left(\left[\frac{\sqrt{1+a}}{2},\frac{\sqrt{1+b}}{2}\right]\right)\notag&=\frac{2}{\pi}\int_{\frac{\sqrt{1+a}}{2}}^{\frac{\sqrt{1+b}}{2}} \sqrt{1-x^2}dx \\
\notag&=\frac{1}{2\pi}\int_{\sqrt{1+a}}^{\sqrt{1+b}} \sqrt{4-x^2}dx \\
\notag&=\frac{1}{4\pi}\int_{1+a}^{1+b}\frac{\sqrt{4-x}}{\sqrt{x}}dx \\
\notag&=\frac{1}{4\pi}\int_a^b\frac{\sqrt{3-x}}{\sqrt{1+x}dx} \\
\notag&=\frac{1}{4\pi}\int_a^1\frac{\sqrt{3-t}}{\sqrt{1+t}}dt+\frac{1}{4\pi}\int_1^b\frac{\sqrt{3-t}}{\sqrt{1+t}}dt.
\end{align}
Similarly, we have that
$$
\mu_{\ST}\left(\left[0,\frac{\sqrt{1-a}}{2}\right]\right)=\frac{1}{4\pi}\int_a^1\frac{\sqrt{3+t}}{\sqrt{1-t}}dt.
$$
Adding the two terms concludes the proof.
\end{proof}

\section{Proof of Main Results}\label{SectionProofMainResults}

Here we prove Theorem~\ref{MainTheorem} and Corollary~\ref{MainCorollary}.

\begin{proof}[Proof of Theorem~\ref{MainTheorem}]

For brevity, we only prove the case $0\leq a<b\leq 1.$ We start by determining the distribution of $A_{-1-\frac{1}{\lambda}}(p)$ in terms of the absolute values of Clausen traces with a fixed Legendre character value. More precisely, Theorem~\ref{K3ClausenCon} we have that $A_{-1-\frac{1}{\lambda}}(p)\in[a,b]$ if and only if one of the following holds.

\noindent
(1) $\phi(-\lambda)=1$ and $\left|\frac{a^{\Cl}_\lambda(p)}{2\sqrt{p}}\right|\in\left[\frac{\sqrt{1+a}}{2},\frac{\sqrt{1+b}}{2}\right].$

\noindent
(2) $\phi(-\lambda)=-1$ and $\left|\frac{a^{\Cl}_\lambda(p)}{2\sqrt{p}}\right|\in\left[\frac{\sqrt{1-b}}{2},\frac{\sqrt{1-a}}{2}\right].\\ $
Since $\lambda\to-1-\frac{1}{\lambda}$ is bijective on $\F_p\setminus\{0,-1\},$ this implies that 
$$
\#\left\{\lambda\in\F_p\setminus\{0,-1\}\ : A_\lambda(p)\in[a,b]\right\}=H^+\left(\frac{\sqrt{1+a}}{2},\frac{\sqrt{1+b}}{2};p\right)+H^{-}\left(\frac{\sqrt{1-b}}{2},\frac{\sqrt{1-a}}{2};p\right).
$$
Proposition~\ref{EffectiveClausenSym3} then implies that
\begin{align}
\notag&\left|\frac{\#\left\{\lambda\in\F_p\setminus\{0,-1\}\ : A_{{\lambda}}(p)\in[a,b]\right\}}{p}-\mu_{\ST}\left(\left[\frac{\sqrt{1+a}}{2},\frac{\sqrt{1+b}}{2}\right]\right)-\mu_{\ST}\left(\left[\frac{\sqrt{1-b}}{2},\frac{\sqrt{1-a}}{2}\right]\right)\right|\\ 
\notag&\leq\left|\frac{H^+\left(\frac{\sqrt{1+a}}{2},\frac{\sqrt{1+b}}{2};p\right)}{p}-\mu_{\ST}\left(\left[\frac{\sqrt{1+a}}{2},\frac{\sqrt{1+b}}{2}\right]\right)\right|\\ 
\notag&+\left|\frac{H^{-}\left(\frac{\sqrt{1-b}}{2},\frac{\sqrt{1-a}}{2};p\right)}{p}-\mu_{\ST}\left(\left[\frac{\sqrt{1-b}}{2},\frac{\sqrt{1-a}}{2}\right]\right)\right| \\
\notag&\leq\frac{55.42}{p^{1/4}}.
\end{align}
If $0\leq a\leq 1<b\leq 3$ or $1\leq a<b\leq 3,$ the same upper bound holds and the proofs are analogous. If $a<0<b,$ the claim follows by writing $[a,b]=[a,0]\cup[0,b]$ and using the triangle inequality.
\end{proof}

\begin{remark}
	The proof implies the smaller constant $55.42$ when $0<a<b<3$ or $-3<a<b<0.$
\end{remark}

\begin{proof}[Proof of Corollary~\ref{MainCorollary}]

If $T'>\frac{\sqrt{3}}{4\pi}$ is any real number, we first find $x\in(0,1)$ such that $\frac{\mu_{\Bat}([1-x,1])}{x}>T'.$ To this end, note that if $x\in(0,1),$ then
$$
\frac{\mu_{\Bat}([1-x,1])}{x}=\frac{1}{4\pi x}\int_{1-x}^1 f(t)dt\geq\frac{1}{4\pi}\cdot f(1-x),
$$
and therefore, it suffices to find $x\in(0,1)$ such that $f(1-x)>4\pi T'.$ An elementary computation shows that $x=\frac{4}{1+16\pi^2T'^2}$ gives us $f(1-x)>4\pi T'.$ Now, set $T'=T+\delta$ where $\delta>0.$ In this case, the proof of Theorem~\ref{MainTheorem} gives us that
$$
\frac{1}{x}\cdot\frac{\#\left\{\lambda\in\F_{p} : A_\lambda(p)\in [1-x,1]\right\}}{p}\geq T+\delta-\frac{55.42}{xp^{1/4}},
$$
for $x=\frac{4}{1+16\pi^2(T+\delta)^2}.$ The corollary follows by a straightforward computation.
\end{proof}

\end{document}